\journalname{}
\newtheorem{defn}[theorem]{Definition}
\newtheorem{lem}[theorem]{Lemma}
\newtheorem{prop}[theorem]{Proposition}
\newtheorem{cor}[theorem]{Corollary}
\newcommand\ML[1]{{\color{black} #1}}
\newcommand{\justo}[1]{{\color{black}{\rm #1}}}
\newcommand\MR[1]{{\color{black} #1}}
\newcommand\MRM[1]{{\color{black} #1}}
\newcommand{\dsum}{\displaystyle\sum}
\newcommand{\dmin}{\displaystyle\min}
\def\R{\mathbb{R}}
\def\N{\mathbb{N}}
\def\sign{\mathrm{sign}}
\begin{document}
\title{Shortest paths and location problems in a continuous framework with different $\ell_p$-norms on different regions}
\titlerunning{ }
\author{Martine Labb\'e \& Justo Puerto \& Mois\'es Rodr\'iguez-Madrena}

\institute{Martine Labb\'e \at
Computer Science Department, Universit\'e Libre de Bruxelles, Belgium\\
INOCS team, INRIA, Lille, France\\
\email{martine.labbe@ulb.be}
\and
J. Puerto \and M. Rodr\'iguez-Madrena \at
IMUS, Instituto de Matem\'aticas, Universidad de Sevilla, Spain\\
Departamento de Estad\'istica e Investigaci\'on Operativa, Universidad de Sevilla, Spain\\
\email{puerto@us.es}
\email{madrena@us.es}}

\date{\today}
\maketitle

\begin{abstract}
\MRM{In this paper we address two different related problems. We first study the problem of finding a simple shortest path \ML{in} a $d$-dimensional real space subdivided in several polyhedra endowed with different $\ell_p$-norms. This problem is a variant of the weighted region problem, a classical path problem in computational geometry introduced in Mitchell and Papadimitriou (JACM 38(1):18-73, 1991). As done in the literature for other geodesic path problems, we relate its local optimality condition with Snell's law and provide an extension of this law in our framework space. We propose a solution scheme based on the representation of the problem as a mixed-integer second order cone problem (MISOCP) using the $\ell_p$-norm modeling procedure given in Blanco et al. (Comput Optim Appl 58(3):563–595, 2014). We derive two different MISOCPs formulations, theoretically compare the lower bounds provided by their continuous relaxations, and propose a preprocessing scheme to improve their performance. The usefulness of this approach is validated \ML{through}  computational experiments.  \ML{The formulations provided are flexible since some extensions of the problem can be \justo{handled} by  transforming the input data in a simple way}. The second problem that we consider is the Weber problem that results in this subdivision of $\ell_p$-normed polyhedra. To solve it, we adapt the solution scheme that we developed for the shortest path problem and validate our methodology with extensive computational experiments.}

\keywords{ Geodesic distances, weighted region problem, location with different norms.}
\subclass{.}
\end{abstract}

\section{Introduction}
\label{s:1}

\normalem

The weighted region problem (WRP) is a generalization of the shortest path problem considered in a geometric domain where the travel distance is region-dependent. More precisely, given a \justo{subdivision} of the plane in polyhedra with different associated weights, the WRP asks for the \justo{Euclidean} shortest path between two points but taking into account that the distance traversed along a polyhedron has to be multiplied by its associated weight. The WRP was originally introduced in \cite{mitchellWRP} and, besides its mathematical interest, it was motivated as a model to
\justo{design} the route of robots through zones with different terrains that \MR{are} \justo{traversed} at different speeds (e.g. grassland, blacktop, water, et cetera). Other practical applications of the WRP have been proposed \MR{for instance} in geographical information systems (GIS) and in Seismology (see \cite{pathrefinementWRP}).


In \cite{mitchellWRP}, the authors proposed an \MR{approximation algorithm} for the WRP that is inspired in the continuous Dijkstra method and which also takes advantage of the \MR{particular} local optimality condition \justo{of} the problem. Later, most of the algorithmic approaches that have been proposed are based on the same \justo{intuitive} approximation scheme: the discretization of the intermediate faces of the polyhedra by placing Steiner points in a clever way, and replacing therefore the original geometric shortest path problem by a shortest path problem in a graph (see e.g. \cite{AMS2005,LMS2001,matamitchell,SR2006}). \MR{For further information about these approximation algorithms for the WRP and the details on their complexities, the reader is referred to the sections dedicated to the problem in \cite{mitchell2000} and \cite{HandbookDCG2017}, as well as to the illustrative comparison tables in \cite{AMS2005}.} \MR{Although these algorithms are able to provide $(1+\varepsilon)$-approximate weigthed Euclidean shortest paths, still no FPTAS for the WRP is known.} \justo{Furthermore, in} \cite{unsolvabilityWRP} it is proven that WRP is not solvable in any algebraic computation model over the rational numbers, which means that, in general, the exact solution of WRP \ML{cannot} be computed in $\mathbb{Q}$ using a finite number of the operations $+$, $-$, $\times$, $\div$, $\sqrt[k]{\textcolor{white}{x}}$, for any $k\geq 2$. As it is said in \cite{pathrefinementWRP}, the fact that WRP is not solvable in any algebraic computation model over the rational numbers seems to indicate that it is unlikely that the problem can be solved in polynomial time: the exact complexity of WRP \ML{is still unknown}. \MR{These considerations seems to explain why} previous research \justo{on this topic has only focused} on determining approximate algorithms to solve the problem.

\MRM{Among the variants of the WRP considered in the literature, we can \justo{distinguish} between the ones that affect the geometric domain where the problem is defined and those that require a specific structure \ML{for} the generated path. An example of variant belonging to the first group is the extension of the WRP \ML{considered} in \cite{CNVW2008}, where the regions of the planar subdivision are endowed with general convex, possibly asymmetric, distance functions. Also regarding the geometric domain where the problem is defined, see \cite{FortSellares2012} and the comparison tables and references in \cite{AMS2005} for a variant of the WRP in non-convex polyhedral surfaces. On the other hand, the WRP with constraints on the number of links studied in \cite{DMNPY2008} \justo{ is a variant that requires a specific structure \ML{for} the generated path. Another} variant in this group is the one in \cite{pathrefinementWRP}, where some additional qualitative criteria are required on a path for the WRP. These qualitative criteria, expressed as geometric rules that the path must obey, are based on the discussions of the authors with Seismologists and Geophysicists, and can be related with the criteria of smoothness and sharp turns avoidance previously considered in the modeling of paths in other real applications \cite{CheeTomizuka1994,Nelson1989,NieKamMooOver2004}. In all these problem variations, the proposed solution schemes are based on the above mentioned discretization by means of Steiner points.}

\MR{In this paper we consider \justo{the Weighted Region Problem with different $\ell_p$-norms ($\ell_p$-WRP)}, a variant of the WRP where each polyhedron of the subdivision is endowed with a different $\ell_{p}$-norm, $1\leq p\leq +\infty$. \MRM{
We study this problem not only in the plane, but also in $\mathbb{R}^d$. As an intuitive qualitative criterion, we restrict feasible paths to be simple, i.e., a path can visit each polyhedron at most once. In general, simple paths fit the criteria of smoothness and sharp turns avoidance better than the paths that enter to and leave the same polyhedron several times.} Since the WRP is not solvable in any algebraic computation model over the rational numbers, \justo{the $\ell_p$-WRP in this paper is not either.} 

We propose a solution scheme for the problem completely different \ML{from} the ones proposed for the WRP and its \ML{variants previously considered. This  scheme} consists in the representation of the problem as a mixed-integer second order cone problem (MISOCP), which is achieved using the $\ell_p$-norms modeling procedure given in \cite{BPE2014,refraction2017}.} \MRM{MISOCPs} can be solved nowadays up to any degree of accuracy with commercial solvers. Therefore, we propose an approximate method with a high precision degree whose accuracy \MR{is only compromised by} the rounding error \MR{inherent to computers} and the interior point algorithm in use. \MRM{In particular we present two MISOCP formulations for the problem, theoretically compare their properties, and propose a preprocessing scheme to improve their performance. The performance of the formulations and the preprocessing are evaluated on a testbed of instances. We also discuss how the developed MISOCP formulations can be adapted \ML{to deal} with two extensions of the problem: the case when rapid transit boundaries are considered and the case of non-simple paths. These extensions can be handled by  applying a \ML{simple} transformation \ML{to} the input data, which shows the flexibility of our approach.}

\MR{As commented above, in the seminal paper  \cite{mitchellWRP}, the authors propose an approximation algorithm for the WRP that exploits the local optimality condition of  the problem. This condition \justo{establishes that a weighted Euclidean} shortest path ``obeys'' Snell's law at the \ML{intersection} face of two polyhedra (see Proposition 3.5 (b) in \cite{mitchellWRP}). 
Snell's law is the  \MRM{physical law} that governs the phenomenon of refraction for light and other waves (see Section \ref{ss:22} for a formal statement). As pointed out in \cite{pathrefinementWRP}, among all the qualitative criteria for a weighted Euclidean shortest path, Snell's law is the most prominent one.} The relation between some classes of geodesic paths and Snell's law has \MR{also} been pointed out in \cite{refraction2017,lyusternik1964,matamitchell,mitchellWRP,Warntz1957} among others. \MR{Indeed,} Snell's law is derived from Fermat's principle, which states that a ray light between two points follows the path that provides the least time.
It is this ``shortest path behaviour'' that motivates the relation between geodesics and Snell's law. \MR{In line with the attention paid to this relation, and as a by-product of our work, we show the extension of Snell's law that results from the \MRM{geodesic distance induced by the $\ell_p$-WRP} considered in this paper. An extension of Snell's law \MRM{in a $\mathbb{R}^d$ space endowed with different $\ell_p$-norms}  has been previously stated using \emph{generalized \ML{sines}} in \cite{refraction2017}.  
In the present paper the statement of the generalized Snell's law is based on the polarity correspondence between $\ell_p$-norms (see e.g. \cite{rockafellarCA}).
Other extensions of Snell's law in different frameworks have been presented \MRM{for instance} in \cite{GhandehariGolomb2000,Slawinski2000}.}

\MR{In \cite{FortSellares2012}, besides  presenting an approximation algorithm for the WRP in non-convex polyhedral surfaces, the authors take advantage of  that algorithm for approximately solving facility location problems in the induced framework, i.e. considering the \MRM{distance measure induced by the WRP}. Facility location problems consist in placing one or more facilities with respect to a set of given demand points in such a way that some \justo{objective function, often related with a distance measure between the new facilities and the demand points, is optimized.}
For a comprehensive overview of this well-known class of problems in Operation Research, the reader is referred to 
\cite{DH02} or \cite{NP05}. \justo{In particular,  the classical Weber location problem consists in minimizing} the sum of the distances between a new facility and the demand points.
Generalizations of the Weber problem in which distances between points are measured in two different ways depending on the regions where the points are located, have been \MRM{studied} in \cite{refraction2017,brimberg2003,brimberg2005,fathaly,FranVelGon2012,Parlar1994}.
For a rigorous analysis of the induced distance measures in this framework see the works \cite{plastria2019I,plastria2019II}. In \cite{refraction2017},  the Weber problem \ML{is considered in a} real space $\mathbb{R}^d$ endowed with two different $\ell_p$-norms. That problem can be seen as \ML{a} particular case of the Weber problem \justo{in the framework induced by the $\ell_p$-WRP  of the present paper} \MRM{taking a subdivision of $\mathbb{R}^d$ consisting of} two half-spaces.
Motivated \justo{by the above comments}, we also consider in this paper such a Weber problem for a general subdivision. We use the results we obtained for \justo{the $\ell_p$-WRP}  to derive a MISOCP representation for the \MRM{Weber problem that it induces}, and therefore following a similar approach \justo{to that using SOCPs in \cite{refraction2017}. The usefulness of this approach is validated reporting extensive computational experiments.}}

\MRM{The paper is organized as follows. Section \ref{s:2} is devoted to \justo{the $\ell_p$-WRP} considered in this work. We 
formally state the problem and introduce the notation and definitions. In Section \ref{ss:21} we relate the local optimality condition for the problem with an extension of Snell's law that we derive. We propose two MISOCP formulations for the problem, study their properties, and design a preprocessing strategy in Section \ref{ss:22}. Computational experiments using these solution schemes are reported in Section \ref{ss:23}. In Section \ref{ss:24} we elaborate on different extensions of the problem. Section \ref{s:3} is devoted to the Weber problem that results in the subdivision of $\ell_p$-normed polyhedra. A solution scheme for this problem based on the one developed in Section \ref{ss:22} is presented in Section \ref{ss:31}. Extensive computational experiments using this approach are shown in Section \ref{ss:32}. Finally, in Section \ref{s:4} some \justo{concluding}  remarks are outlined.}

\section{Shortest paths between points in a continuous framework with different norms}
\label{s:2}

Let $\{P_1,...,P_m\}$ be a \justo{subdivision} in polyhedra of $\mathbb{R}^d$, i.e., $P_1,...,P_m$ are polyhedra, $\bigcup_{i=1}^m  P_i = \mathbb{R}^d$ and $\text{int}(P_i) \cap \text{int}(P_j) = \emptyset$ for all $i,j=1,...,m$, $i\not=j$.  

We consider the problem of finding \ML{a shortest simple path} between two points $x_s,x_t\in \mathbb{R}^d$ assuming that each polyhedron $P_i$ is endowed with a different $\ell_{p_i}$-norm and has associated a weight $\omega_i>0$, $i=1,...,m$. The adjacency relationship between the polyhedra in the \justo{subdivision} 
is determined by the \ML{undirected} graph \MR{$G=(V,E)$} such that $V=\{1,...,m\}$ and \MR{$\{i,j\}\in E$} iff $P_i$ and $P_j$ have a common face, $i,j=1,...,m$, $i\not=j$. We denote by $F_{ij}=P_i \cap P_j$ the \ML{maximal} common face of two polyhedra $P_i$ and $P_j$.

Assume that $s$ is the index in $\{1,...,m\}$ such that $x_s\in P_s$, and that $t$ is the index such that $x_t\in P_t$, $t\in \{1,...,m\}$. In order to formally define the problem, we introduce the set $\Gamma$ of all the simple paths from $s$ to $t$ in $G$. Then, $(s,i_1,i_2,\cdots ,i_{k-1},i_k, t)\in \Gamma$ is the vertex sequence of a simple path from $s$ to $t$ in $G$, \ML{where all vertices of the sequence are different}. 

We state and analyze the problem for general $\ell_{p}$-norms with $1\leq p_i \leq +\infty$, although we anticipate that our \justo{solution} scheme will be restricted to $\ell_{p}$-norms such that either $p=q/r$ with $q, r \in \N\setminus \{0\}$ and $\gcd(q,r)=1$, or $p=+\infty$. In the following, $\|x\|_p$ refers to the $\ell_p$-norm of $x\in \mathbb{R}^d$. 

The problem can be stated as follows. We look for a shortest path between the points $x_s,x_t\in \mathbb{R}^d$, $s\not= t$, whose length is given by the distance measure 
\begin{align}
D(x_s,x_t) = \inf_{\scriptsize\begin{array}{c}(s,i_1,i_2,\cdots ,i_{k-1},i_k, t)\in \Gamma  \\ y_{s i_1}\in F_{s i_1},\ldots,y_{i_kt}\in F_{i_kt}\end{array}} \omega_s\|y_{s i_1}-x_s\|_{p_{s}} + \omega_{i_1}\|y_{i_1 i_2}-y_{si_1}\|_{p_{i_1}} + \ldots + \nonumber\\
 \omega_{ i_k}\|y_{i_k i_t}-y_{i_{k-1}i_k}\|_{p_{i_k}} + \omega_{t}\|x_t-y_{i_k t}\|_{p_{t}}. \label{SPP} \tag{${\rm SPP}$}
\end{align}
The $y$ points \ML{belonging to} the faces of the polyhedra are called \emph{gate points}. If $s= t$, then we consider $D(x_s,x_t) = \|x_t-x_s\|_{p_{s}}$. In case one of the points $x_s$ or $x_t$ is \ML{on a face that belongs to} two or more polyhedra, we assume it belongs to exactly one of them which is \ML{determined} in advance. An illustrative instance of Problem \eqref{SPP} is shown in Fig. \ref{instanceSPP}.

\begin{figure}[H]
\centering
\includegraphics[scale=0.6]{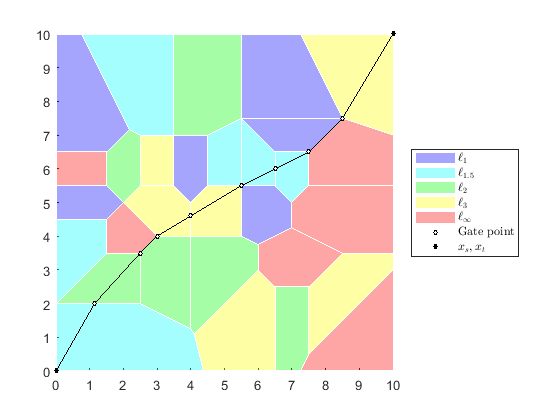}
\caption{An instance of Problem \eqref{SPP}. }\label{instanceSPP}
\end{figure}

It is not difficult to see that Problem (\ref{SPP}) is well-defined given that for any simple path of the finite set $\Gamma$, the location of the gate points can be bounded, so the problem reduces to determine some points in some compact sets in order to minimize a continuous function. Then, in the following w.l.o.g. we assume that $P_1,...,P_m$ are polytopes (i.e., bounded polyhedra) and therefore their faces $F_{ij}$ are also polytopes.  

\ML{Each polytope $P_i$ is encoded by its set $\text{Ext}(P_i)$  of extreme points. Any point $x$ in a polytope $P_i$ can be represented as the convex combination of its extreme points: $x=\sum_{e\in \text{Ext}(P_i)} \lambda_e e$ for some scalars $\lambda_e\geq 0$, $e\in \text{Ext}(P_i)$, with $\sum_{e\in \text{Ext}(P_i)} \lambda_e= 1$. Further, for  a face  $F_{ij}$, $\text{Ext}(F_{ij})=\text{Ext}(P_i) \cap \text{Ext}(P_j)$.}

For any \ML{optimal} solution $(y_{s i_1},y_{i_1i_2},\ldots,y_{k-1k},y_{kt})\in F_{s i_1}\times F_{i_1i_2}\times \ldots \times F_{k-1k}\times F_{kt}$ of Problem (\ref{SPP}) for a fixed $\gamma=(s,i_1,i_2,\cdots ,i_{k-1},i_k, t)\in \Gamma$, we call the piecewise linear path $[x_s,y_{s i_1}]\cup [y_{s i_1},y_{i_1i_2}]\cup \ldots \cup [y_{k-1k},y_{kt}]\cup [y_{kt},x_t]\subseteq \mathbb{R}^2$ a \emph{geodesic path} between the points $x_s$ and $x_t$, and we refer to $\{x_s,y_{s i_1},y_{i_1i_2},\ldots,y_{k-1k},y_{kt},x_t\}$ as the \emph{breaking points} of the geodesic path. A shortest path is therefore a geodesic path with minimum length.

In Problem (\ref{SPP}), a shortest path is associated to a path $\gamma = (s,i_1,i_2,\cdots ,i_{k-1},i_k, t) \in \Gamma$ in the graph $G$. In our approach, such a path $\gamma$ is by definition simple, which means that \ML{each vertex appears at most once in $\gamma$}. Note that this approach implies that in Problem (\ref{SPP}) a shortest path can visit each polyhedron \ML{at most once.}

The distance measure $D$ induced by Problem (\ref{SPP}) does not define a metric because it does not verify the triangle inequality. To clarify this statement consider the situation illustrated in Fig. \ref{Counterexample}. \ML{The polyhedra $P_1 = \{x\in\mathbb{R}^2: (-1,1) x \geq 5\}$, $P_2 = \{x\in\mathbb{R}^2: 0 \leq (-1,1) x \leq 5\}$ and $P_3 = \{x\in\mathbb{R}^2: (-1,1) x \leq 0\}$ constitute a \justo{subdivision} of $\mathbb{R}^2$.} Polyhedron $P_i$ is endowed with the $\ell_1$-norm and \ML{its associated weight is $\omega_i = i$, $i=1,2,3$.} Taking the points 
$x_s = (1,0)^T$, $x_u = (1,9)^T$ and $x_t = (10,9)^T$, one can \ML{verify} that $D(x_s,x_t) > D(x_s,x_u) +D(x_u,x_t)$: 
\begin{eqnarray*}
&& D(x_s,x_t)  =  3\|(10,9)^T -(1,0)^T \|_1 = 54,\\ 
&& D(x_s,x_u)  =  3\|(1,1)^T-(1,0)^T \|_1+2\|(1,6)^T-(1,1)^T\|_1+\|(1,9)^T-(1,6)^T\|_1 = 16,\\ 
&& D(x_u,x_t)  =  \|(4,9)^T - (1,9)^T\|_1 +2\|(9,9)^T - (4,9)^T\|_1+3\|(10,9)^T - (9,9)^T\|_1= 16.
\end{eqnarray*}

\begin{figure}[H]
\centering
\includegraphics[scale=0.6]{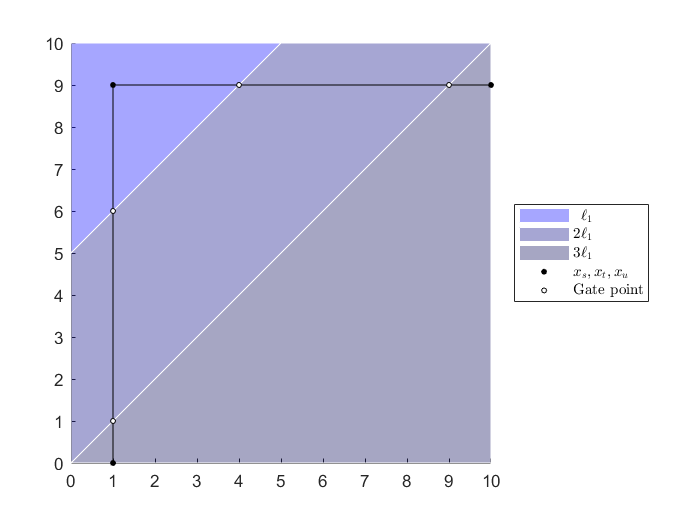}
\caption{The distance measure $D$ does not verify the triangle inequality for the points $x_s = (1,0)^T$, $x_u = (1,9)^T$, $x_t = (10,9)^T$ since $D(x_s,x_t) > D(x_s,x_u) +D(x_u,x_t)$. }\label{Counterexample}
\end{figure}

\ML{If we would allow a path to visit a polyhedron more than once, then the induced distance would verify the triangle inequality.} However, that is not the distance measure considered in this paper that focuses on the distance measure $D$ defined as in Problem (\ref{SPP}). Some extensions of the distance measure $D$ will be considered in Section \ref{ss:24}.

\MRM{Let us discuss the solvability of Problem (\ref{SPP}).} The weighted region problem (WRP) is the particular case of Problem (\ref{SPP}) in $\mathbb{R}^2$ involving weigthed Euclidean norms only, but allowing the path to visit each polyhedron more than once. In \cite{unsolvabilityWRP} it is proven that WRP is not solvable in any algebraic computation model over the rational numbers, which means that, in general, the exact solution of WRP cannot be computed in $\mathbb{Q}$ using a finite number of the operations $+$, $-$, $\times$, $\div$, $\sqrt[k]{\textcolor{white}{x}}$, for any $k\geq 2$. The result is obtained proving the unsolvability for the instance shown in Fig. \ref{Carufel} in which the following subdivision of $\mathbb{R}^2$ is considered: $P_1 = \{x\in\mathbb{R}^2: (1,0) x \leq 1\}$, $P_2 = \{x\in\mathbb{R}^2: 1\leq (1,0) x \leq 3\}$ and $P_3 = \{x\in\mathbb{R}^2: (1,0) x \geq 3\}$. Polyhedron $P_i$ is endowed with the $\ell_2$-norm and has associated a weight $\omega_i = i$, $i=1,2,3$. The starting and the terminal points are $x_s = (0,0)^T$ and $x_t = (6,2)^T$, respectively. \justo{WRP and Problem (\ref{SPP}) are equivalent for this instance since the optimal solution of WRP does not visit any polyhedron more than once. Therefore, it follows that Problem (\ref{SPP}) is also not solvable either in any algebraic computation model over the rational numbers.}

\begin{figure}[H]
\centering
\includegraphics[scale=0.6]{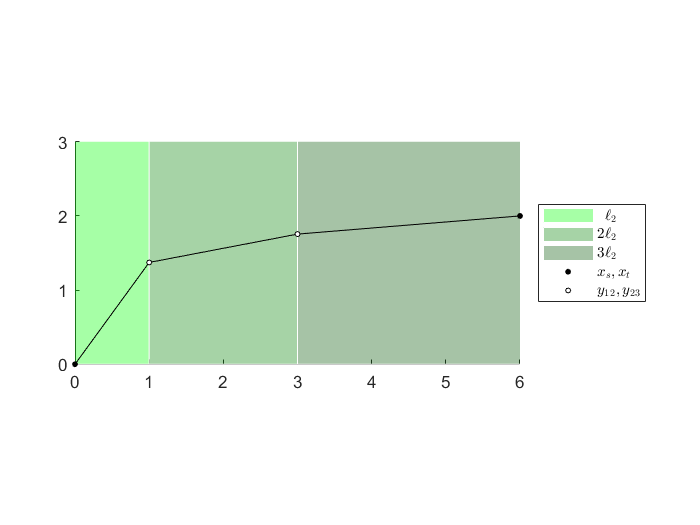}
\caption{\MRM{Unsolvable instance of the WRP from \cite{unsolvabilityWRP}.}}\label{Carufel}
\end{figure}

\subsection{Local optimality condition for gate points and its relation with Snell's law}
\label{ss:21}  

\ML{The gate points of a geodesic path are local optima} \MR{of Problem \eqref{SPP}}. In this section we provide a result that characterizes the local optimality condition for the gate points in Problem \eqref{SPP}. 

First of all, we introduce some notation. We will use the notation $\text{conv}(S)$, $\text{aff}(S)$ and  $\text{relint}(S)$  to refer to the convex hull, the affine hull and the relative interior of a set of points $S$ in $\mathbb{R}^d$, respectively. If the set of points $S$ is also an affine set, we will denote by $L(S)$ its corresponding vector subspace, i.e., $L(S) = S-S = \{x-x':x\in S,x'\in S\}$, and by $L(S)^\perp$ the orthogonal complement of $L(S)$. Finally, we will denote by $B_p$ the unit ball of the $\ell_p$-norm of \justo{$\mathbb{R}^d$}, i.e., $B_p = \{x\in\mathbb{R}^d: \|x\|_p\leq 1\}$.

The following result is a technical lemma needed to establish the local optimality condition for the gate points in Problem \eqref{SPP}.

\begin{lem} \label{techlemma}
Let $P$ be a polytope in $\mathbb{R}^d$. Consider $x\in P$ and let $\mathcal{F}_x$ be the face of $P$ such that $x\in \text{\normalfont relint}(\mathcal{F}_x)$. Then, for any $e\in \text{\normalfont Ext}(P)$, we have that $e\in \text{\normalfont Ext}(\mathcal{F}_x)$ iff there exists  \ML{a} representation of $x$ as a convex combination of the extreme points of $P$ \ML{in which} the scalar associated to $e$ is strictly positive. Moreover, there exists a representation of $x$ as \ML{a} convex combination of the extreme points of $\mathcal{F}_x$ \ML{in which} all the scalars are strictly positive.
\end{lem}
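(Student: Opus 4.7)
The plan is to reduce the lemma to two classical facts about polytopes. The first (Fact A) is that for any polytope $Q$, a point $y\in Q$ lies in $\text{relint}(Q)$ if and only if $y$ admits a representation as a convex combination of the extreme points of $Q$ in which every scalar is strictly positive. The second (Fact B) is the defining exposure property of faces: if $F$ is a face of $P$ and $y,z\in P$ satisfy $\alpha y+(1-\alpha)z\in F$ for some $\alpha\in(0,1)$, then $y,z\in F$. Together with the standard inclusion $\text{Ext}(\mathcal{F}_x)\subseteq\text{Ext}(P)$, these will suffice.

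Armed with Fact A, I would first dispatch the ``moreover'' claim by applying it directly to the polytope $\mathcal{F}_x$: since $x\in\text{relint}(\mathcal{F}_x)$ by assumption, $x$ is a strict convex combination of $\text{Ext}(\mathcal{F}_x)$. The same representation also yields the forward direction of the iff: given $e\in\text{Ext}(\mathcal{F}_x)$, I extend the strict convex combination to a convex combination over $\text{Ext}(P)$ by assigning zero weight to all vertices of $P$ outside $\text{Ext}(\mathcal{F}_x)$, and the scalar of $e$ remains strictly positive.

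For the reverse direction, I would start from a representation $x=\sum_{v\in\text{Ext}(P)}\lambda_v v$ with $\lambda_e>0$. The case $\lambda_e=1$ is trivial since then $x=e$ and $\mathcal{F}_x=\{e\}$. Otherwise I would group terms as $x=\lambda_e e+(1-\lambda_e)y$, with $y=(1-\lambda_e)^{-1}\sum_{v\neq e}\lambda_v v\in P$ and $\lambda_e\in(0,1)$. Applying Fact B to the face $\mathcal{F}_x$ forces $e\in\mathcal{F}_x$, and combined with $e\in\text{Ext}(P)$ this yields $e\in\text{Ext}(\mathcal{F}_x)$.

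The main obstacle, as expected, is Fact A; this is where the real work sits, while the rest of the argument is essentially bookkeeping. The easy direction (strict convex combination implies relative interior) follows by perturbing the scalars toward those of any other convex combination of $\text{Ext}(Q)$ and observing that small perturbations preserve nonnegativity. The harder direction uses an averaging trick: for each $v\in\text{Ext}(Q)$, I would use $x\in\text{relint}(Q)$ to extend the segment from $v$ through $x$ slightly past $x$ while remaining in $Q$, obtaining $w\in Q$ with $x=\tfrac{1}{1+\varepsilon}w+\tfrac{\varepsilon}{1+\varepsilon}v$ for some $\varepsilon>0$; expanding $w$ as a convex combination of $\text{Ext}(Q)$ produces a representation of $x$ in which $v$ carries strictly positive weight. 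Averaging these $|\text{Ext}(Q)|$ representations then yields a single representation in which every extreme point of $Q$ has strictly positive weight.
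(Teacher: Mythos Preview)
Your proof is correct and follows essentially the same strategy as the paper's: both obtain the ``moreover'' clause and the forward implication from the characterization of $\text{relint}(\mathcal{F}_x)$ as the set of strictly positive convex combinations of $\text{Ext}(\mathcal{F}_x)$, and both handle the reverse implication via the absorption property of faces. The only substantive differences are in packaging: the paper cites Rockafellar's Theorem~6.9 for what you call Fact~A and Theorem~18.1 (if a convex subset of $P$ has relative interior meeting a face $F$, then the whole subset lies in $F$) where you invoke the segment version, Fact~B, directly. Your route is thus slightly more self-contained, proving Fact~A by the extend-and-average trick rather than citing it, and using the two-point decomposition $x=\lambda_e e+(1-\lambda_e)y$ in place of the paper's passage through $\text{conv}(\mathcal{E})$; the paper's route is shorter on the page since it outsources both steps to Rockafellar.
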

\begin{proof}
\MR{First, we note that there can only be one face $\mathcal{F}_x$ of $P$ such that $x\in \text{relint}(\mathcal{F}_x)$. By Theorem 6.9 in \cite{rockafellarCA}, we know that
$$\text{relint}(\mathcal{F}_x) =  \left\{\sum_{e\in \text{Ext}(\mathcal{F}_x)} \lambda_e e :  \lambda_{e}>0, e\in \text{Ext}(\mathcal{F}_x), \text{ and } \sum_{e\in \text{Ext}(\mathcal{F}_x)} \lambda_e= 1\right\},$$
\ML{so that} there exists a representation of $x$ as the convex combination of the extreme points of $\mathcal{F}_x$ where all the scalars are strictly positive. 

Now, let $e'\in \text{Ext}(P)\setminus \text{Ext}(\mathcal{F}_x)$ and let us assume that there exists a representation of $x$ as a convex combination of the extreme points of $P$ in which the scalar associated to $e'$ is strictly positive. Let $\mathcal{E}\subseteq \text{Ext}(P)$ be the extreme points whose associated scalars in that convex combination are strictly positive. Then, by Theorem 6.9 in \cite{rockafellarCA}, we have that $x\in \text{relint}(\text{conv}(\mathcal{E}))$. Thus, $\text{relint}(\text{conv}(\mathcal{E}))\cap \mathcal{F}_x\neq \emptyset$ and by Theorem 18.1 in \cite{rockafellarCA} it follows that $\text{conv}(\mathcal{E})\subseteq \mathcal{F}_x$. In particular that inclusion implies $e'\in \text{Ext}(\mathcal{F}_x)$, which contradicts the above \ML{statement}. \qed}
\end{proof}

\begin{prop}[Local optimality criterion for gate points] \label{propLOCCP}
Assume $1<p_i<+\infty$, $i=1,...,m$.
Let $a=(a_1,\ldots,a_d)^T$, $b=(b_1,\ldots,b_d)^T$ and $c=(c_1,\ldots,c_d)^T$ be consecutive breaking points of a geodesic path in Problem \eqref{SPP} such that $a\in P_i\setminus F_{ij}$, $b\in F_{ij}$ and  $c\in P_j\setminus F_{ij}$. Let $\mathcal{F}_b$ be the face of $F_{ij}$ such that $b\in \text{\normalfont relint}(\mathcal{F}_b)$. Then, the gate point $b$ satisfies 
\begin{equation*}
\omega_i \sum_{k=1}^d \left[ \frac{|b_k-a_k|}{\|b -a\|_{p_i}}\right]^{p_i-1} \sign(b_k-a_k)(f_k-g_k) =
\omega_j \sum_{k=1}^d \left[ \frac{|c_k-b_k|}{\|c-b\|_{p_j}}\right]^{p_j-1} \sign(c_k-b_k)(f_k-g_k),
\end{equation*}
for all points $f,g\in \mathcal{F}_b$.
\end{prop}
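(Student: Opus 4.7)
The plan is to interpret $b$ as the solution of a small local optimization problem and exploit a first-order optimality condition. Once all other breaking points of the geodesic path are fixed, the only part of the total length that depends on $b$ is
\[
\phi(b') \;=\; \omega_i\|b'-a\|_{p_i}+\omega_j\|c-b'\|_{p_j},
\]
and $b$ must minimize $\phi$ over $b'\in F_{ij}$. The hypotheses $a\in P_i\setminus F_{ij}$ and $c\in P_j\setminus F_{ij}$ ensure $b-a\neq 0$ and $c-b\neq 0$, so the assumption $1<p_i,p_j<+\infty$ makes $\phi$ differentiable at $b$. A standard computation for the gradient of an $\ell_p$-norm away from the origin then yields, coordinate by coordinate,
\[
[\nabla\phi(b)]_k \;=\; \omega_i\left[\frac{|b_k-a_k|}{\|b-a\|_{p_i}}\right]^{p_i-1}\!\!\sign(b_k-a_k)\;-\;\omega_j\left[\frac{|c_k-b_k|}{\|c-b\|_{p_j}}\right]^{p_j-1}\!\!\sign(c_k-b_k).
\]

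The second step is to describe the set of admissible perturbations of $b$ inside $F_{ij}$. Since $b\in\text{relint}(\mathcal{F}_b)$ and $\mathcal{F}_b$ is a face of $F_{ij}$, for every $v\in L(\text{aff}(\mathcal{F}_b))$ there exists $\varepsilon>0$ with $b+tv\in\mathcal{F}_b\subseteq F_{ij}$ for all $t\in(-\varepsilon,\varepsilon)$. Lemma \ref{techlemma} makes this concrete: writing $b$ as a strictly positive convex combination of the extreme points of $\mathcal{F}_b$, a sufficiently small perturbation along any $v\in L(\text{aff}(\mathcal{F}_b))$ can still be expressed as a convex combination of those extreme points, with positive weights, in both directions $\pm v$. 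Consequently, the differentiable function $\phi$ attains a local minimum at $b$ along every line $t\mapsto b+tv$ with $v\in L(\text{aff}(\mathcal{F}_b))$, which forces $\langle\nabla\phi(b),v\rangle=0$ for all such $v$.

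To conclude, I would observe that $\{f-g:f,g\in\mathcal{F}_b\}=L(\text{aff}(\mathcal{F}_b))$. Substituting $v=f-g$ in $\langle\nabla\phi(b),v\rangle=0$ and rearranging the two pieces of the gradient produces exactly the equation stated in the proposition.

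The main obstacle I anticipate is the bidirectional perturbation argument. Optimality of $b$ over $F_{ij}$ by itself only yields an inequality $\langle\nabla\phi(b),v\rangle\geq 0$ for directions $v$ that keep $b+tv$ feasible for small $t>0$, which is insufficient to derive an equality. The relative interior condition $b\in\text{relint}(\mathcal{F}_b)$, certified by Lemma \ref{techlemma}, is exactly what promotes every admissible direction along $\mathcal{F}_b$ to a bidirectional one and turns the inequality into the required equality. Beyond this point, the proof reduces to straightforward calculus with $\ell_p$-norms.
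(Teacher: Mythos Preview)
Your argument is correct and reaches the same conclusion by a somewhat more direct route than the paper. Both proofs start from the local subproblem $\min_{y\in F_{ij}}\phi(y)$ with $\phi(y)=\omega_i\|y-a\|_{p_i}+\omega_j\|c-y\|_{p_j}$, but the paper parametrizes $y$ as a convex combination of the extreme points of $F_{ij}$, writes down the full KKT system with multipliers $\tau$ and $\xi_e$, and invokes Lemma~\ref{techlemma} to determine which $\xi_e$ vanish; the stated equality is then obtained by subtracting the stationarity conditions at pairs of extreme points of $\mathcal{F}_b$ and taking convex combinations. You bypass the Lagrangian altogether: the relative-interior hypothesis $b\in\text{relint}(\mathcal{F}_b)$ directly gives two-sided feasibility of every direction $v\in L(\text{aff}(\mathcal{F}_b))$, hence $\langle\nabla\phi(b),v\rangle=0$. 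Your approach is shorter and more geometric; the paper's is more mechanical and incidentally produces the intermediate relation~\eqref{LOCGPeq1} (a common constant $-\tau$ over $\text{Ext}(\mathcal{F}_b)$), which is not needed here but clarifies the structure.

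One small correction: the equality $\{f-g:f,g\in\mathcal{F}_b\}=L(\text{aff}(\mathcal{F}_b))$ is false in general (take $\mathcal{F}_b=[0,1]$, where the left-hand side is $[-1,1]$). Only the inclusion $\{f-g:f,g\in\mathcal{F}_b\}\subseteq L(\text{aff}(\mathcal{F}_b))$ is needed for your last step, and that inclusion is immediate, so the argument stands.
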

\begin{proof}
In Problem \eqref{SPP}, the gate points of a geodesic path  are optimally located in the common faces of the polyhedra to provide the minimum cost. In particular, the above statement implies that the gate point $b$ is an optimal solution of the problem 
\begin{equation*}
\dmin_{y\in F_{ij}} \omega_i\|y-a \|_{p_i}+\omega_j\|c-y\|_{p_j},
\end{equation*}
which can be equivalently written as
\begin{equation*}
\displaystyle \begin{array}{rcl} \dmin & & \omega_i\|y(\lambda)-a \|_{p_i}+\omega_j\|c-y(\lambda)\|_{p_j}\\
s.t. & & \sum_{e\in \text{Ext}(F_{ij})} \lambda_e = 1\\
     & & \lambda_e\geq 0\quad \forall e\in \text{Ext}(F_{ij})
\end{array} 
\end{equation*}
\ML{where} $y(\lambda) = \sum_{e\in \text{Ext}(F_{ij})} \lambda_e e$.

The last problem is a convex minimization problem with a set of linear constraints. Then, \justo{as} \ML{its associated Lagrangian function is}
\begin{equation*}
L(\lambda,\tau,\xi) = \omega_i\|y(\lambda)-a \|_{p_i}+\omega_j\|c-y(\lambda)\|_{p_j}+\tau \left(\sum\nolimits_{e\in \text{Ext}(F_{ij})} \lambda_e - 1\right)+\sum\nolimits_{e\in \text{Ext}(F_{ij})} \xi_e (-\lambda_e),
\end{equation*}
the necessary and sufficient conditions for a global optimum of the problem are: 
\begin{eqnarray*}
&&\omega_i \sum_{k=1}^d \left[ \frac{|y_k(\lambda)-a_k|}{\|y(\lambda) -a\|_{p_i}}\right]^{p_i-1} \sign(y_k(\lambda)-a_k)e_k + \omega_j \sum_{k=1}^d \left[ \frac{|c_k - y_k(\lambda)|}{\|c - y(\lambda)\|_{p_j}}\right]^{p_j-1} \sign(y_k(\lambda)-c_k)e_k \\ 
&&+ \tau - \xi_e = 0, \; \forall e\in \text{Ext}(F_{ij}),\\
\\
&&\xi_e =0, \; \forall e\in \text{Ext}(F_{ij}): \lambda_e\not= 0,\\
\\
&&\sum_{e\in \text{Ext}(F_{ij})} \lambda_e - 1 =0,\\
\\
&&\lambda_e\geq  0, \; \forall e\in \text{Ext}(F_{ij}),
\end{eqnarray*}
where $y_k(\lambda) = \sum_{e\in \text{Ext}(F_{ij})} \lambda_e e_k$.

Now, observing that there exists an optimal solution and that this is what we call the gate point b, by Lemma \ref{techlemma}, we know that there does not exist $\lambda_e \geq 0$, $e\in \text{Ext}(P)$, satisfying $b=\sum_{e\in \text{Ext}(P)} \lambda_e e$ and $\sum_{e\in \text{Ext}(P)} \lambda_e= 1$, such that $\lambda_{e'}>0$ for some $e'\in \text{Ext}(P)\setminus \text{Ext}(\mathcal{F}_b)$. Moreover, again by  Lemma \ref{techlemma}, we also know that $b=\sum_{e\in \text{Ext}(\mathcal{F}_b)} \lambda_e^b e$ for some $\lambda_e^b > 0$, $e\in \text{Ext}(\mathcal{F}_b)$, with $\sum_{e\in \text{Ext}(\mathcal{F}_b)} \lambda_e^b= 1$. Then, from the above necessary and sufficient conditions it follows that
\small \begin{equation}\label{LOCGPeq1}
\omega_i \sum_{k=1}^d \left[ \frac{|b_k-a_k|}{\|b -a\|_{p_i}}\right]^{p_i-1} \sign(b_k-a_k)e_k +
\omega_j \sum_{k=1}^d \left[ \frac{|c_k-b_k|}{\|c -b\|_{p_j}}\right]^{p_j-1} \sign(b_k-c_k)e_k = -\tau, \; \forall e\in \text{Ext}(\mathcal{F}_b).
\end{equation} \normalsize

At this point, consider $f,g\in \mathcal{F}_b$. Let $\lambda_e^{f}\geq 0$, $e\in \text{Ext}(\mathcal{F}_b)$, be any scalars such that $f=\sum_{e\in \text{Ext}(\mathcal{F}_b)} \lambda^{f}_e e$ and $\sum_{e\in \text{Ext}(\mathcal{F}_b)} \lambda^{f}_e= 1$. In the same way, let $\lambda_e^{g}\geq 0$, $e\in \text{Ext}(\mathcal{F}_b)$, satisfying $g=\sum_{e\in \text{Ext}(\mathcal{F}_b)} \lambda^{g}_e e$ and $\sum_{e\in \text{Ext}(\mathcal{F}_b)} \lambda^{g}_e= 1$.

For each $e\in \text{Ext}(\mathcal{F}_b)$, proceed as follows. 
By equating the left side of the equalities in \eqref{LOCGPeq1} associated to $e$ and any $e'\in\text{Ext}(\mathcal{F}_b)$, one can derive that
\small\begin{equation*}
\omega_i \sum_{k=1}^d \left[ \frac{|b_k-a_k|}{\|b -a\|_{p_i}}\right]^{p_i-1} \sign(b_k-a_k)(e_k-e_k') =
\omega_j \sum_{k=1}^d \left[ \frac{|c_k-b_k|}{\|c-b\|_{p_j}}\right]^{p_j-1} \sign(c_k-b_k)(e_k-e_k'), \;  \forall e'\in \text{Ext}(\mathcal{F}_b).
\end{equation*} \normalsize
Next, for each $e'\in \text{Ext}(\mathcal{F}_b)$, multiply its associated equality from the collection above by $\lambda_{e'}^g$, and then sum them all. When this process is done for all $e\in \text{Ext}(\mathcal{F}_b)$, the new set of equalities
\small\begin{equation}\label{LOCGPeq2}
\omega_i \sum_{k=1}^d \left[ \frac{|b_k-a_k|}{\|b -a\|_{p_i}}\right]^{p_i-1} \sign(b_k-a_k)(e_k-g_k) =
\omega_j \sum_{k=1}^d \left[ \frac{|c_k-b_k|}{\|c-b\|_{p_j}}\right]^{p_j-1} \sign(c_k-b_k)(e_k-g_k), \;  \forall e\in \text{Ext}(\mathcal{F}_b),
\end{equation} \normalsize
is obtained.

Finally, for each $e\in \text{Ext}(\mathcal{F}_b)$, multiply its associated equality from the collection \eqref{LOCGPeq2} by $\lambda_{e}^{f}$, and then sum them all. The necessary condition stated in the proposition results from this operation.\qed
\end{proof}

\MRM{Based on the local optimality criterion given in Proposition \ref{propLOCCP} we derive an extension of Snell's law in the context of Problem \eqref{SPP}.}

\MRM{Snell's law is the  physical law that governs the phenomenon of refraction. It states the following.} Consider two different media $\mathcal{M}_1$ and $\mathcal{M}_2$ where a planar wave have indices of refraction $n_1$ and $n_2$, respectively. \MRM{The index  of refraction is in inverse proportion to the phase velocity of the wave in the considered medium}. Depending on the context the medium can be air, water, glass, et cetera, and the typical examples of waves to illustrate the law are sound waves, water waves or the light. Then, when the wave pass from medium $\mathcal{M}_1$ into medium $\mathcal{M}_2$, the relationship $n_1 \sin \theta_1 = n_2 \sin \theta_2$ holds, where $\theta_1$ is the angle of incidence on the separation boundary between the two media and $\theta_2$ is the angle of refraction. A clarifying illustration of the refraction phenomenon for a light ray is shown in Fig. \ref{fig:snell1}. The angle of incidence $\theta_1$ is the angle between the incoming ray and the normal to the separation boundary. On the other hand, the angle of refraction $\theta_2$ is the angle between the outgoing ray and the normal to the separation boundary. 

\begin{figure}[h]
\begin{center}
\begin{tikzpicture}[scale=0.67]

    \coordinate (O) at (0,0) ;
    \coordinate (A) at (0,4) ;
    \coordinate (B) at (0,-4) ;
    \coordinate (a) at (-130:-5.2) ;
    \coordinate (b) at (50:5.05) ;
    \coordinate (b2) at (230:2.525) ;
    \coordinate (d) at (250:4.24) ;
    \coordinate (d2) at (250:2.7) ;
    \coordinate (d3) at (250:4.3) ;
    \coordinate (a2) at (50:2.6) ;
    \coordinate (c) at (0:0) ;
    \coordinate (c2) at (70:-4.24) ;
    \node[right] at (-2,2) {$\mathcal{M}_2$};
    \node[left] at (2,-2) {$\mathcal{M}_1$};
    \draw[dash pattern=on5pt off3pt] (A) -- (B) ;
\draw (-4,0) -- (4,0);

\path[->, thick] (O) edge (b); 
\draw [thick] (0,0) -- (d);

\draw[thick, dotted] (0,0) -- (70:4.24);
    \draw (0,1.4) arc (90:70:1.4) ;
    \node[] at (80:1.9)  {$\theta_{1}$};
    \draw (0,2.4) arc (90:50:2.4);
    \node[] at (60:2.9)  {$\theta_{2}$};

      \path[->, thick] (d) edge (d2); 
      
      \node[] at (175:3)  {boundary};
      \node[] at (241:4.1)  {light};
      \node[] at (103:3.75)  {normal};

\end{tikzpicture}
\end{center}
\caption{A light ray obeying Snell's law on the plane.\label{fig:snell1}}
\end{figure}
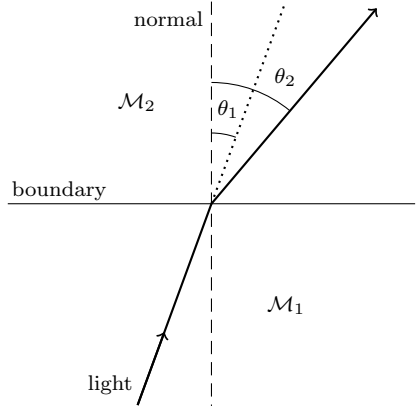

\MRM{In the following result we formally state the classical Snell's law  using the notation and concepts introduced for Problem \eqref{SPP}.}

\begin{prop}[Snell's law]
 \label{classicalSnell}
Assume $d=2$, $m=2$, $P_1=\{x\in\mathbb{R}^2: (\alpha_1,\alpha_2)x   \leq \beta\}$ with $\alpha_1,\alpha_2,\beta\in \mathbb{R}$, $P_2=\{x\in\mathbb{R}^2: (\alpha_1,\alpha_2)x   \geq \beta\}$, $F_{12}=\{x\in\mathbb{R}^2: (\alpha_1,\alpha_2) x = \beta\}$, $p_1=p_2=2$, $s=1$, $t=2$, $x_s\in P_1\setminus F_{12}$ and $x_t\in P_2\setminus F_{12}$. Let $a=(a_1,a_2)^T$, $b=(b_1,b_2)^T$ and $c=(c_1,c_2)^T$ be the consecutive breaking points of the unique geodesic path in Problem \eqref{SPP}, i.e., $a=x_s$, $c = x_t$ and $b$ is the gate point in $F_{12}$ that solves the problem. Then, for each non-zero vector $v\in L(F_{12})^\perp$, the gate point $b$ satisfies 
$$ \omega_s \sin \theta_s = \omega_t \sin \theta_t,$$
where  $\theta_s$ is  the angle between the vectors $b-a$ and $v$, and $\theta_t$ is  the angle between  the vectors $c-b$ and  $v$. 
\end{prop}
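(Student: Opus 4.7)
My plan is to derive the statement as a direct specialization of the local optimality criterion in Proposition \ref{propLOCCP}. In this two-half-plane setting $F_{12}$ is the full line $\{x\in\mathbb{R}^2: \alpha_1 x_1+\alpha_2 x_2=\beta\}$, so the unique face of $F_{12}$ containing $b$ in its relative interior is $\mathcal{F}_b = F_{12}$ itself, and $L(\mathcal{F}_b)=L(F_{12})$ is the one-dimensional space of directions parallel to the boundary. The mild technicality that the paper's standing assumption reduces to polytopes is harmless here: one may intersect $P_1,P_2$ with a sufficiently large bounding box containing $a$, $b$, $c$ without affecting the local optimality condition at $b$.

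Substituting $p_i=p_j=2$ into Proposition \ref{propLOCCP}, each summand of the form $[|b_k-a_k|/\|b-a\|_2]^{p_i-1}\sign(b_k-a_k)$ collapses to $(b_k-a_k)/\|b-a\|_2$, and likewise for the $c,b$ terms. The conclusion can be rewritten in inner-product form as
\[
\left\langle \omega_s\,\frac{b-a}{\|b-a\|_2}-\omega_t\,\frac{c-b}{\|c-b\|_2},\ f-g\right\rangle=0\qquad \text{for all } f,g\in F_{12}.
\]
Because $f-g$ ranges over all of $L(F_{12})$ as $f,g$ sweep $F_{12}$, this says precisely that
\[
\omega_s\,\frac{b-a}{\|b-a\|_2}-\omega_t\,\frac{c-b}{\|c-b\|_2}\in L(F_{12})^\perp.
\]

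To convert this inclusion into the stated Snell identity, I will project both normalized direction vectors onto $L(F_{12})$. Fix any non-zero $v\in L(F_{12})^\perp$ and let $w$ be a unit vector spanning $L(F_{12})$, so that $\{v/\|v\|_2,w\}$ is an orthonormal basis of $\mathbb{R}^2$. Taking the inner product of the preceding inclusion with $w$ yields
\[
\omega_s\,\frac{(b-a)\cdot w}{\|b-a\|_2}=\omega_t\,\frac{(c-b)\cdot w}{\|c-b\|_2}.
\]
Now $\cos\theta_s=(b-a)\cdot v/(\|b-a\|_2\|v\|_2)$, and the Pythagorean identity in the orthonormal basis $\{v/\|v\|_2,w\}$ gives $((b-a)\cdot w)^2/\|b-a\|_2^2=1-\cos^2\theta_s=\sin^2\theta_s$, with the analogous identity for $c-b$. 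Taking absolute values of both sides of the displayed scalar equality and recalling $\sin\theta\geq 0$ for $\theta\in[0,\pi]$ then delivers $\omega_s\sin\theta_s=\omega_t\sin\theta_t$.

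The only subtlety I anticipate is the book-keeping with signs when passing from $(b-a)\cdot w$ to $\sin\theta_s$; this is resolved automatically by the absolute-value step, since the two sides of the scalar equality above are already equal and hence share the same sign. Everything else is a straightforward reading of Proposition \ref{propLOCCP} in the $p=2$ case.
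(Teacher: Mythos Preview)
Your argument is correct. You specialize Proposition~\ref{propLOCCP} to $p_i=p_j=2$, rewrite the optimality condition as
\[
\omega_s\,\frac{b-a}{\|b-a\|_2}-\omega_t\,\frac{c-b}{\|c-b\|_2}\in L(F_{12})^\perp,
\]
and then read off the sine identity by projecting onto a unit vector $w$ spanning $L(F_{12})$ and using the Pythagorean relation in the orthonormal frame $\{v/\|v\|_2,w\}$. The bounding-box remark to make $F_{12}$ a polytope with $b$ in its relative interior is exactly what is needed to invoke Proposition~\ref{propLOCCP}, and the absolute-value step correctly handles the sign ambiguity since $\sin\theta\geq 0$ on $[0,\pi]$.

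This is, however, a genuinely different route from the paper. The paper does not prove Proposition~\ref{classicalSnell} at all: it simply cites Lemma~3.2 of Mitchell--Papadimitriou~\cite{mitchellWRP} as an external known fact, and then uses it as the starting point to derive the cosine and dot-product forms (Corollaries~\ref{cosineSnell} and~\ref{dotproductSnell}). Only afterwards does the paper close the loop by deriving the generalized dot-product form (Corollary~\ref{dotProductSnellPropLOCCP}) from Proposition~\ref{propLOCCP} and showing it specializes back to Corollary~\ref{dotproductSnell}. Your approach short-circuits this detour: you go directly from Proposition~\ref{propLOCCP} to the classical Snell relation, making the argument self-contained within the paper's own framework rather than relying on an external reference. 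The trade-off is that the paper's presentation emphasizes Snell's law as a familiar physical anchor from which the generalization departs, whereas your derivation treats it purely as a corollary of the abstract optimality criterion.
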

\begin{proof}
For a straightforward proof of this well-known result, we refer the reader to the proof of Lemma 3.2 in \cite{mitchellWRP}.\qed
\end{proof}  

We propose below two alternative statements of Snell's law based on equivalent \justo{statements} of Proposition \ref{classicalSnell}. The first one keeps the trigonometric interpretation of the law exchanging the sine function by the cosine one. The second one provides an interpretation of the law in terms of the dot product of vectors. As we will see, the subsequent extension of the law will follow in a clearer way from these alternative statements.

\begin{cor}[Snell's law - Cosine form]
 \label{cosineSnell}
\MRM{Assume the hypothesis of Proposition \ref{classicalSnell}.} Then, for each non-zero vector $v\in L(F_{12})$, the gate point $b$ satisfies 
$$ \omega_s \cos \phi_s = \omega_t \cos \phi_t,$$
where  $\phi_s$ is  the angle between the vectors $b-a$ and $v$, and $\phi_t$ is  the angle between  the vectors $c-b$ and  $v$. 
\end{cor}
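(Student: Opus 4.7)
The plan is to derive this corollary directly from Proposition~\ref{classicalSnell} by exploiting the fact that, in dimension $d=2$, the tangent line $L(F_{12})$ and the normal line $L(F_{12})^\perp$ are one-dimensional and orthogonal, so for any non-zero vector $u\in\mathbb{R}^2$, its angle with a tangent vector $v$ and its angle with a normal vector $v'$ are complementary (up to sign). Concretely, I would first fix a non-zero $v'\in L(F_{12})^\perp$ and the given $v\in L(F_{12})$, use that $\{v/\|v\|_2, v'/\|v'\|_2\}$ is an orthonormal basis of $\mathbb{R}^2$, and apply Parseval's identity to $u\in\{b-a,c-b\}$ in the form
\[
1=\frac{(u\cdot v)^2}{\|u\|_2^2\|v\|_2^2}+\frac{(u\cdot v')^2}{\|u\|_2^2\|v'\|_2^2}=\cos^2\phi+\cos^2\theta,
\]
where $\phi$ is the angle between $u$ and $v$ and $\theta$ the angle between $u$ and $v'$. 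Since $\sin\theta\geq 0$ for $\theta\in[0,\pi]$, this yields $|\cos\phi|=\sin\theta$.

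Applying this to $u=b-a$ and $u=c-b$ gives $|\cos\phi_s|=\sin\theta_s$ and $|\cos\phi_t|=\sin\theta_t$. Combined with the Snell identity $\omega_s\sin\theta_s=\omega_t\sin\theta_t$ of Proposition~\ref{classicalSnell}, I immediately obtain $\omega_s|\cos\phi_s|=\omega_t|\cos\phi_t|$. The cosine form then follows once I show that $\cos\phi_s$ and $\cos\phi_t$ have the \emph{same} sign, i.e.\ that $(b-a)\cdot v$ and $(c-b)\cdot v$ are of like sign.

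This sign-matching is the only delicate step, and it is the main obstacle I anticipate. I would argue it from the optimality of $b$: the classical Snell law is a geometric reformulation of the condition that the incident and refracted rays lie in the same half-plane determined by the normal line through $b$ (otherwise one could strictly shorten the weighted path by moving $b$ along $F_{12}$). Since $v$ spans $L(F_{12})$, this common half-plane is characterized exactly by the sign of the projection onto $v$, so $(b-a)\cdot v$ and $(c-b)\cdot v$ share their sign. If I prefer to avoid a geometric appeal, the same fact can be extracted directly from the first-order optimality condition of Proposition~\ref{propLOCCP} specialized to $p_1=p_2=2$, where setting $f-g$ proportional to $v$ yields $\omega_s(b-a)\cdot v/\|b-a\|_2=\omega_t(c-b)\cdot v/\|c-b\|_2$, which is exactly $\omega_s\cos\phi_s=\omega_t\cos\phi_t$ and simultaneously pins down the sign. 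With the sign resolved, the stated equality follows.
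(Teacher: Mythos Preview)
Your proposal is correct and follows essentially the same route as the paper: the paper's proof is a one-line appeal to ``the complementary angle identity for trigonometric functions'' applied to Proposition~\ref{classicalSnell}. Your Parseval computation is exactly this complementary-angle relation made explicit, and you are right that strictly speaking it only yields $|\cos\phi|=\sin\theta$; the paper glosses over the sign issue you isolate, implicitly relying on the standard geometric fact (built into the classical Snell setting) that the incident and refracted rays lie on the same side of the normal at $b$. Your second suggestion---specializing Proposition~\ref{propLOCCP} to $p_1=p_2=2$ with $f-g$ proportional to $v$---is in fact the cleanest way to close the gap, since it gives $\omega_s\cos\phi_s=\omega_t\cos\phi_t$ directly with the sign; this is precisely the content of Corollary~\ref{dotproductSnell} in the paper, which the paper later derives \emph{from} Corollary~\ref{cosineSnell} rather than the other way around.
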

\begin{proof}
The result follows directly from Proposition \ref{classicalSnell} and the complementary angle identity for trigonometric functions. \qed
\end{proof}

\begin{cor}[Snell's law - Dot product form]
\label{dotproductSnell}
\MRM{Assume the hypothesis of Proposition \ref{classicalSnell}.} Then, for each vector $v\in L(F_{12})$, the gate point $b$ satisfies 
$$ \omega_i \left( \frac{b-a}{\|b -a\|_{2}}\right)^T v= \omega_j \left( \frac{c-b}{\|c -b\|_{2}}\right)^T v.$$
\end{cor}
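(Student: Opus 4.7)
The plan is to deduce this dot-product form directly from the cosine form already established in Corollary \ref{cosineSnell}, by unpacking the definition of the angle between two vectors in terms of the Euclidean inner product. Concretely, for any non-zero vectors $u,w \in \mathbb{R}^d$, the angle $\theta$ between them satisfies $u^T w = \|u\|_2 \|w\|_2 \cos \theta$, which is precisely the bridge between the two formulations.

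First I would dispose of the trivial case $v = 0$, where both sides of the claimed identity vanish. Next, for a non-zero $v \in L(F_{12})$, I would apply the formula for the angle between vectors to the angles $\phi_s$ (between $b-a$ and $v$) and $\phi_t$ (between $c-b$ and $v$) appearing in Corollary \ref{cosineSnell}, obtaining
\begin{equation*}
\cos \phi_s = \frac{(b-a)^T v}{\|b-a\|_2 \|v\|_2}, \qquad \cos \phi_t = \frac{(c-b)^T v}{\|c-b\|_2 \|v\|_2}.
\end{equation*}
Here we use that $b - a \neq 0$ and $c - b \neq 0$, which hold because $a \in P_1 \setminus F_{12}$ and $c \in P_2 \setminus F_{12}$ while $b \in F_{12}$.

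Substituting these expressions into the identity $\omega_s \cos \phi_s = \omega_t \cos \phi_t$ provided by Corollary \ref{cosineSnell} and cancelling the common positive factor $\|v\|_2$ from both sides yields
\begin{equation*}
\omega_s \left(\frac{b-a}{\|b-a\|_2}\right)^T v = \omega_t \left(\frac{c-b}{\|c-b\|_2}\right)^T v,
\end{equation*}
which is the required identity (matching $\omega_i = \omega_s$ and $\omega_j = \omega_t$ in the statement). Since there is no real obstacle beyond this translation, the main thing to watch for is simply the edge case $v = 0$ and the justification that the denominators $\|b-a\|_2$ and $\|c-b\|_2$ do not vanish, both of which are immediate from the hypotheses on the locations of $a$, $b$, $c$.
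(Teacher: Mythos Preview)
Your proposal is correct and follows essentially the same approach as the paper's own proof, which likewise derives the identity from Corollary \ref{cosineSnell} by invoking the relations $(b-a)^T v = \|b-a\|_2\|v\|_2 \cos \phi_s$ and $(c-b)^T v = \|c-b\|_2\|v\|_2 \cos \phi_t$. Your version is slightly more careful in explicitly handling the case $v=0$ and noting why the denominators do not vanish, but the underlying argument is the same.
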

\begin{proof}
The result follows directly by simply noting in Corollary \ref{cosineSnell} that $(b-a)^T v = \|b-a\|_2\|v\|_2 \cos \phi_s$ and $(c-b)^T v = \|c-b\|_2\|v\|_2 \cos \phi_t$. \qed
\end{proof}

\MRM{The polarity correspondence between $\ell_p$-norms will be the key to understand the geometry behind the extension of Snell's law.} 

Polarity is an operation that induces a symmetric one-to-one correspondence in the class  of all the $\ell_p$-norms on $\mathbb{R}^d$ with $p\in (1,+\infty)$. Consider an $\ell_p$-norm with $p\in (1,+\infty)$ and let $B_p$ be its unit ball.  Then, it is known that there exists a unique $\ell_{p'}$-norm with $p'\in (1,+\infty)$ whose unit ball $B_{p'}$ is the polar set of $B_p$, where the polar set $B_p^\circ$ of $B_p$ is given by 
\begin{equation*}\label{polarset}
B_p^\circ = \left\{x'\in\mathbb{R}^d : x^T x'\leq 1, \forall x\in B_p\right\}.
\end{equation*}
This $\ell_{p'}$-norm is called the polar norm of $\ell_p$ or, alternatively, the dual norm of $\ell_p$. The polar norm $\ell_{p'}$ of $\ell_p$ can also be characterized as the support function of the unit ball $B_p$ of $\ell_p$, i.e.,
\begin{equation*}\label{supportfunction}
\|x\|_{p'} = \sup \{x^Tx' : x'\in B_p\}
\end{equation*}
for each $x\in\mathbb{R}^d$. As it is pointed out above, if $\ell_{p'}$ is the polar norm of $\ell_p$, then $\ell_p$ is the polar norm of $\ell_{p'}$ too. The following simple characterization of polarity correspondence holds: the norms $\ell_p$ and $\ell_{p'}$ are polar to each other iff $\frac{1}{p}+\frac{1}{p'}=1$ (see \cite{rockafellarCA} for more details on this and all the above statements). In what follows, for a given $p\in (1,+\infty)$, we will adopt the notation $p^{\circ}$ to refers to the real number in $(1,+\infty)$ such that $\frac{1}{p}+\frac{1}{p^\circ}=1$, i.e., $p^\circ = \frac{p}{p-1}$.

\MRM{For a given $p\in (1,+\infty)$, it is clear that the polar set $B_{p}^\circ$ is the counterpart in the normed space $(\mathbb{R}^d,\|\cdot\|_{p^\circ})$ of the set $B_p$ of the normed space $(\mathbb{R}^d,\|\cdot\|_p)$. To extends Snell's law to the framework induced by Problem \ref{SPP}, we will need to determine 
which vector in the normed space $(\mathbb{R}^d,\|\cdot\|_{p^\circ})$ is the counterpart of a given vector $v$ of the normed space $(\mathbb{R}^d,\|\cdot\|_p)$, in other words, which is its ``polar'' vector. Although the answer to this question may be well-known by specialist, we derive it in the following since we could not find any reference.}

\MRM{A simple way to characterize a vector of  $\mathbb{R}^d$ is by means of its length and its direction. For example, in the \justo{Euclidean} normed space $(\mathbb{R}^d, \|\cdot\|_2)$ two vectors  $v$ and $v'$ are the same vector iff they have the same length $\|v\|_2 = \|v'\|_2$ and the angle between then is $0$.}
 In this case, the angle between two non-zero vectors $v$ and $v'$
 is defined as the real number $\theta\in [0,\pi]$ satisfying the equality $\cos \theta = \frac{v^T v'}{\|v\|_2\|v'\|_2}$.
It is the Cauchy-Schwarz inequality $|v^T v'|\leq \|v\|_2\|v'\|_2$ which ensures the proper image of the cosine function in the above definition, i.e., $-1\leq \frac{v^Tv'}{\|v\|_2\|v'\|_2}\leq 1$. More generally, we recall that in all normed spaces $(\mathbb{R}^d, \|\cdot\|)$ where the norm $\|\cdot\|$ can be defined from an inner product $\langle\cdot,\cdot\rangle$ as $\|\tilde{v}\| = \sqrt{\langle \tilde{v}, \tilde{v}\rangle}$ for each $\tilde{v}\in\mathbb{R}^d$ (as it is the case of the \justo{Euclidean} norm with the dot product: $\|\tilde{v}\|_2 = \sqrt{\tilde{v}^T \tilde{v}}$ for each $\tilde{v}\in\mathbb{R}^d$), the Cauchy-Schwarz inequality $|\langle v, v'\rangle |\leq \|v\|\|v'\|$ is satisfied.  This leads to the known fact that in those normed spaces $(\mathbb{R}^d, \|\cdot\|)$ the angle between the vectors $v$ and $v'$ can be set as the real number $\phi\in [0,\pi]$ such that $\cos \phi = \frac{\langle v^T, v'\rangle}{\|v\|\|v'\|}$. Consider now a normed space $(\mathbb{R}^d, \|\cdot\|_p)$ with $p\in(1,+\infty)$. It is known that when $p\not= 2$ there is not an inner product $\langle \cdot, \cdot\rangle$ from which \MRM{the norm $\|\cdot\|_p$} can be defined as indicated above (see e.g. \cite{Kreyszig1978}). Moreover, the Cauchy-Schwarz inequality is not satisfied in $(\mathbb{R}^d, \|\cdot\|_p)$ when $p\not= 2$. \MRM{From the discussion above, it is clear that a vector $v$ of $(\mathbb{R}^d,\|\cdot\|_p)$ and its counterpart $v^{\circ}$ in $(\mathbb{R}^d,\|\cdot\|_{p^\circ})$
should have the same length $\|v\|_p = \|v^\circ\|_{p^\circ}$
(measured in their respective normed spaces) and the same direction according to an appropiate notion of angle. To define such an angle we base on H\"older inequality: $\sum_{k=1}^d |v_k v_k'| \leq \|v\|_p\|v'\|_{p^\circ}$, for all $v=(v_1,\cdots,v_d)^T,v'=(v_1',\cdots,v_d')^T\in\mathbb{R}^d$ (see e.g. \cite{Kreyszig1978}). H\"older inequality for $p=2$ implies Cauchy-Schwarz inequality, as the polar norm of the \justo{Euclidean} norm $\|\cdot\|_2$ is itself. In addition, H\"older inequality ensures $-1\leq \frac{v^Tv'}{\|v\|_p\|v'\|_{p^\circ}}\leq 1$ for all non-zero vectors $v,v'\in \mathbb{R}^d$, which makes possible to introduce the following notion of angle.}

\begin{defn}[$\ell_p$\hspace{0.1mm}-angle]\label{lpangle}
Let $p\in(1,+\infty)$. Given two non-zero vectors $v,v'\in \mathbb{R}^d$, the $\ell_p$-angle between $v$ and $v'$, which we denote by $\varphi_p (v,v')$, is the real number $\varphi_p (v,v')\in [0,\pi]$ such that $\cos \varphi_p (v,v') = \frac{v^T v'}{\|v\|_p\|v'\|_{p^\circ}}$.
\end{defn}

\MRM{We note that} due to the anisotropy of the $\ell_p$-norm when $p\not=2$, in general $\varphi_p (v,v')\not= \varphi_p (v',v)$, but it is satisfied  $\varphi_p (v,v')= \varphi_{p^{\circ}} (v',v)$.

\MRM{The notion of $\ell_p$-angle can also be related with the work \cite{Mangasarian1999}. In \cite{Mangasarian1999} it is studied 
the problem of projecting a point onto a hyperplane using an $\ell_p$-norm.} By \cite{Mangasarian1999} we know the following. Consider $v$ and $v'$ in the normed space $(\mathbb{R}^d,\|\cdot\|_p)$ with $p\in(1,+\infty)$ and let $\ell_p\text{-proj}_{\mathcal{H}} v$ be the projection of the point $v$ onto the hyperplane $\mathcal{H} = \{x\in\mathbb{R}^d : (v')^Tx = 0\}$.
Then, the distance between $v$ and $\ell_p\text{-proj}_{\mathcal{H}} v$ is given by $\|v - \ell_p\text{-proj}_{\mathcal{H}} v \|_p=  \frac{v^T v'}{\|v'\|_{p^\circ}}$. \MRM{On the other hand, it is easy to see} that the standard \MRM{Euclidean $\ell_2$-angle} $\theta$ between $v$ and $v'$ can be characterized by the relationship $\cos \theta = \frac{\|v - \ell_2\text{-proj}_{\mathcal{H}} v \|_2}{\|v\|_2}$. \MRM{The generalization of this characterization provides a natural interpretation of the $\ell_p$-angle as the angle such that $\cos \varphi_p(v,v') = \frac{\| v-\ell_p\text{-proj}_{\mathcal{H}} v \|_p}{\|v\|_p}$.}

\MRM{The proposition below stablishes a one-to-one correspondence between the vectors in $(\mathbb{R}^d,\|\cdot\|_{p})$ and the vectors in $(\mathbb{R}^d,\|\cdot\|_{p^\circ})$ that answers the question of how to define the ``polar'' vector.}

\begin{prop}\label{proppolarvector}
Assume $\mathbb{R}^d$ is endowed with an $\ell_p$-norm with $p\in(1,+\infty)$.
Consider the map between normed spaces $(\mathbb{R}^d,\|\cdot\|_{p}) \to (\mathbb{R}^d,\|\cdot\|_{p^\circ})$ given 
by $v=(v_1,\ldots,v_d)^T \mapsto v^\circ=(v_1^\circ,\ldots,v_d^\circ)^T$, where: if $v$ is not the zero vector, $v_k^\circ = \left(\frac{|v_k|}{\|v\|_p}\right)^{p-1}\sign(v_k) \|v\|_p$, $k=1,\ldots,d$; otherwise, $v^\circ$ is the zero vector. Then, given $v\in\mathbb{R}^d$, the vector $v^\circ$ is the unique vector in $\mathbb{R}^d$ satisfaying $\|v\|_p = \|v^\circ\|_{p^\circ}$ and $\varphi_p (v,v^\circ)=0$.
\end{prop}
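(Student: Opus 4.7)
The plan is to verify that $v^\circ$ satisfies the two required properties by direct computation, and then to derive uniqueness from the equality case of H\"older's inequality. The case $v=0$ is immediate, since then $v^\circ=0$ and both conditions are vacuously satisfied (with the convention $\varphi_p(0,0)=0$), so I will assume $v\neq 0$ throughout.

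For the length condition, I exploit the algebraic identity $(p-1)p^\circ=p$, which follows directly from $\frac{1}{p}+\frac{1}{p^\circ}=1$. Taking absolute values in the defining formula and raising to the power $p^\circ$ yields $|v_k^\circ|^{p^\circ}=\frac{|v_k|^p}{\|v\|_p^p}\|v\|_p^{p^\circ}$, and summing over $k$ gives $\|v^\circ\|_{p^\circ}^{p^\circ}=\|v\|_p^{p^\circ}$, so $\|v^\circ\|_{p^\circ}=\|v\|_p$. For the angle condition, I observe that $v_k\sign(v_k)=|v_k|$, so the product $v_k v_k^\circ$ simplifies to $|v_k|^p\|v\|_p^{2-p}$; summing gives $v^T v^\circ=\|v\|_p^{2-p}\cdot\|v\|_p^p=\|v\|_p^2=\|v\|_p\,\|v^\circ\|_{p^\circ}$, and by Definition \ref{lpangle} this is exactly $\cos\varphi_p(v,v^\circ)=1$, i.e., $\varphi_p(v,v^\circ)=0$.

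For uniqueness, let $w\in\mathbb{R}^d$ be any vector satisfying $\|w\|_{p^\circ}=\|v\|_p$ and $\varphi_p(v,w)=0$. By Definition \ref{lpangle} the angle condition forces $v^T w=\|v\|_p\,\|w\|_{p^\circ}$, so H\"older's inequality is saturated, and moreover it is saturated \emph{without} losses in passing from $\sum_k v_kw_k$ to $\sum_k|v_kw_k|$. The standard equality case of H\"older then supplies a constant $\mu\geq 0$ with $|w_k|^{p^\circ}$ proportional to $|v_k|^p$, which after extracting roots reads $|w_k|=\mu|v_k|^{p-1}$ for all $k$, together with $\sign(w_k)=\sign(v_k)$ wherever $v_k\neq 0$. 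Enforcing $\|w\|_{p^\circ}=\|v\|_p$ pins $\mu=\|v\|_p^{2-p}$, and substituting back gives $w_k=\sign(v_k)|v_k|^{p-1}\|v\|_p^{2-p}=v_k^\circ$.

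The main obstacle is not any essential analytic difficulty but rather the careful bookkeeping around the H\"older equality case: one must argue simultaneously the proportionality of the sequences $(|v_k|^p)_k$ and $(|w_k|^{p^\circ})_k$ \emph{and} the sign alignment needed to upgrade $|v^T w|$ to $v^T w$, and then translate between the exponents via the identity $(p-1)p^\circ=p$ in order to recover the explicit closed form for $v^\circ$.
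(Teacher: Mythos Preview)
Your proof is correct and follows essentially the same logic as the paper's, though organized differently. The paper first reduces to the case $\|v\|_p=1$, invokes the support-function characterization $\|v\|_p=\sup\{v^Tv':v'\in B_{p^\circ}\}$, and obtains uniqueness by asserting (without detail) that this supremum has a unique maximizer; it then extends to general $v$ by scaling. You instead carry out the two verifications directly for arbitrary nonzero $v$ via the identity $(p-1)p^\circ=p$, and you make the uniqueness step explicit through the equality case of H\"older's inequality. Your route is slightly more self-contained, since the paper's ``unique maximizer'' claim is precisely the H\"older equality case that you spell out; conversely, the paper's support-function framing makes the duality interpretation of $v^\circ$ more visible.
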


\begin{proof}
Suppose first the case when $v\in\mathbb{R}^d$ is such that $\|v\|_p = 1$. Then, the coordinates of $v^\circ$ are $v_k^\circ = |v_k|^{p-1}\sign(v_k)$, $k=1,\ldots,d$. Now, consider the evaluation of the $\ell_p$-norm of $v$ given by the support function of the unit ball $B_{p^\circ}$ of $\ell_{p^\circ}$:
\begin{equation*}\label{sfunitvector}
\|v\|_{p} = \sup \{v^T v' : v'\in B_{p^\circ}\}.
\end{equation*}
It is not difficult to see that the above problem has a unique optimal solution. Note that if $v^\circ$ is that optimal solution and $\|v^\circ\|_{p^\circ}=1$, the result follows for this case.
But the above hypothesis is true since: on the one hand, 
\begin{equation*}
1 = \|v\|_{p} = \|v\|_{p}^p =  \sum_{k=1}^d |v_k|^p =  \sum_{k=1}^d v_k |v_k|^{p-1}\sign (v_k) = v^T v^\circ;
\end{equation*}
on the other hand,
\begin{equation*}
1 =v^T v^\circ= \sum_{k=1}^d v_k |v_k|^{p-1}\sign (v_k)  = \sum_{k=1}^d \left||v_k|^{p-1}\sign (v_k)\right|^{\frac{p}{p-1}} =\|v^\circ\|_{p^\circ}^{p^\circ} = \|v^\circ\|_{p^\circ}.
\end{equation*}

Consider now the case when $v\in\mathbb{R}^d$ is a non-zero vector with arbitrary $\ell_p$-norm. It is straightforward to check that $\left(\frac{v}{\|v\|_p}\right)^\circ = \frac{v^\circ}{\|v\|_p}$. As $\left\|\frac{v}{\|v\|_p}\right\|_p = 1$, by the reasoning above, we have that $\left\|\frac{v}{\|v\|_p}  \right\|_{p}=\left\|\frac{v^\circ}{\|v\|_p}\right\|_{p^\circ}$, from which follows that $\|v\|_p = \|v^\circ\|_{p^\circ}$. Also by the reasoning above, it holds that $\left(\dfrac{v}{\|v\|_p}\right)^T \dfrac{v^\circ}{\|v\|_{p}}=1$. Regarding the unicity of $v^\circ$, note that this is implied by the unicity of $\left(\frac{v}{\|v\|_p}\right)^\circ$.

Finally, if $v\in\mathbb{R}^d$ is the zero vector, the result follows directly. \qed
\end{proof}

\begin{defn}[Polar vector]\label{defpolarvector}
Assume $\mathbb{R}^d$ is endowed with an $\ell_p$-norm with $p\in(1,+\infty)$ and let $v\in\mathbb{R}^d$. The polar vector of $v$ is the vector $v^\circ$ given in Proposition \ref{proppolarvector}.
\end{defn}

We remark that the above definition of polar vector $v^\circ$ of \justo{of another vector} $v$ depends on the normed space $(\mathbb{R}^d,\|\cdot\|_p)$ in which $v$ is considered. In the following, we may avoid to explicitly point out that information when it is clear by the context.   

Now we are in position to state the anticipated extension of  Snell's law.

\begin{cor}[Snell's-like local optimality criterion for gate points - Dot product form]
\label{dotProductSnellPropLOCCP}
\textcolor{white}{lineskip} Assume $1<p_i<+\infty$, $i=1,...,m$.
Let $a=(a_1,\ldots,a_d)^T$, $b=(b_1,\ldots,b_d)^T$ and $c=(c_1,\ldots,c_d)^T$ be consecutive breaking points of a geodesic path in Problem \eqref{SPP} such that $a\in P_i\setminus F_{ij}$, $b\in F_{ij}$ and  $c\in P_j\setminus F_{ij}$. Let $\mathcal{F}_b$ be the face of $F_{ij}$ such that $b\in \text{\normalfont relint}(\mathcal{F}_b)$. Then, for each vector $v\in L(\text{\normalfont aff}(\mathcal{F}_b))$, the gate point $b$ satisfies 
$$ \omega_i \left( \left[ \frac{b-a}{\|b -a\|_{p_i}}\right]^\circ\right)^T v= \omega_j \left( \left[ \frac{c-b}{\|c -b\|_{p_j}}\right]^\circ\right)^T v,$$
or equivalently, 
$$ \omega_i \left(  \frac{(b-a)^\circ}{\|(b -a)^\circ\|_{p_i^\circ}}\right)^T v= \omega_j \left( \frac{(c-b)^\circ}{\|(c -b)^\circ\|_{p_j^\circ}}\right)^T v.$$
\end{cor}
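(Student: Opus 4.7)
The plan is to derive the statement directly from the local optimality criterion of Proposition \ref{propLOCCP} by recognizing the two sums appearing there as dot products with appropriate polar vectors. The first observation will be a simple specialization of Proposition \ref{proppolarvector} to unit vectors: for any nonzero $u\in\mathbb{R}^d$ and any $p\in(1,+\infty)$, since $\|u/\|u\|_p\|_p=1$, the formula in Proposition \ref{proppolarvector} collapses to
$$\left[\frac{u}{\|u\|_p}\right]^\circ_k = \left(\frac{|u_k|}{\|u\|_p}\right)^{p-1}\sign(u_k), \qquad k=1,\ldots,d.$$

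Applying this identity to $u=b-a$ with $p=p_i$ and to $u=c-b$ with $p=p_j$, I will recognize that the two coordinate-wise summations appearing in Proposition \ref{propLOCCP} coincide exactly with the dot products $\left(\left[\frac{b-a}{\|b-a\|_{p_i}}\right]^\circ\right)^T (f-g)$ and $\left(\left[\frac{c-b}{\|c-b\|_{p_j}}\right]^\circ\right)^T (f-g)$, respectively. This will already establish the first form of the corollary for every vector of the form $f-g$ with $f,g\in\mathcal{F}_b$.

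Next, I will upgrade the set of admissible test vectors from differences of points of $\mathcal{F}_b$ to every element of $L(\text{aff}(\mathcal{F}_b))$. Since $b\in\text{relint}(\mathcal{F}_b)$, for any $v\in L(\text{aff}(\mathcal{F}_b))$ one can find $\varepsilon>0$ small enough so that $b+\varepsilon v\in \mathcal{F}_b$; choosing $f=b+\varepsilon v$ and $g=b$ yields $f-g=\varepsilon v$, and dividing the resulting equality by $\varepsilon$ delivers the stated identity for the arbitrary vector $v$.

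Finally, to obtain the equivalent reformulation, I will rely on two identities already used inside the proof of Proposition \ref{proppolarvector}, namely $\left(\frac{u}{\|u\|_p}\right)^\circ = \frac{u^\circ}{\|u\|_p}$ and $\|u\|_p = \|u^\circ\|_{p^\circ}$. Combining them gives $\left[\frac{u}{\|u\|_p}\right]^\circ = \frac{u^\circ}{\|u^\circ\|_{p^\circ}}$, which applied to $u=b-a$ and $u=c-b$ converts the first form into the second. I do not expect any genuinely difficult step: the main task is the algebraic bookkeeping that recognizes the expressions inside Proposition \ref{propLOCCP} as the coordinates of polar vectors.
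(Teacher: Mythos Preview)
Your proposal is correct and follows essentially the same route as the paper: rewrite the summations in Proposition \ref{propLOCCP} as dot products with the polar of the normalized direction vectors, and then extend from differences $f-g$ with $f,g\in\mathcal{F}_b$ to arbitrary $v\in L(\text{aff}(\mathcal{F}_b))$ using that $b\in\text{relint}(\mathcal{F}_b)$. The paper phrases the extension step slightly differently (it writes $v=\eta(\tilde{x}-b)$ for some $\tilde{x}\in\mathcal{F}_b$ and scalar $\eta$, rather than your $b+\varepsilon v\in\mathcal{F}_b$), but the two are equivalent; your explicit verification of the second, ``equivalent'' form via $\left(\frac{u}{\|u\|_p}\right)^\circ=\frac{u^\circ}{\|u^\circ\|_{p^\circ}}$ is a welcome addition that the paper leaves implicit.
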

\begin{proof}
First, observe that by Definition \ref{defpolarvector}, we can rewrite the local optimality condition given in Proposition \ref{propLOCCP} as 
\begin{equation}\label{dotProductSnellPropLOCCPeq12}
\omega_i \left( \left[ \frac{b-a}{\|b -a\|_{p_i}}\right]^\circ\right)^T (f-g)= \omega_j \left( \left[ \frac{c-b}{\|c -b\|_{p_j}}\right]^\circ\right)^T (f-g),
\end{equation}
for all $f,g\in\mathcal{F}_b$. Now, consider a vector $v\in L(\text{\normalfont aff}(\mathcal{F}_b))$ and let $x,x'\in \text{\normalfont aff}(\mathcal{F}_b)$ be any two points such that   $v=x-x'$. Note that, as $b\in \text{relint}(\mathcal{F}_b)$, there exist $\tilde{x}\in \mathcal{F}_b$ and $\eta\in \mathbb{R}$ such that $\eta(\tilde{x}-b) = x-x'$. Thus, the result follows by multiplying equality \eqref{dotProductSnellPropLOCCPeq12} by $\eta$ and taking $f=\tilde{x}$ and $g=b$.\qed
\end{proof}

\begin{cor}
Corollary \ref{dotProductSnellPropLOCCP} implies Corollary \ref{dotproductSnell}. 
\end{cor}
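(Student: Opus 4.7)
The plan is to specialize Corollary \ref{dotProductSnellPropLOCCP} to the hypotheses of Corollary \ref{dotproductSnell} and verify that the generalized identity collapses to the Euclidean one. Two simplifications do all the work: (i) for $p=2$ the polar vector coincides with the vector itself, and (ii) the face $\mathcal{F}_b$ of Corollary \ref{dotproductSnell} has affine hull equal to $F_{12}$.

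First, I would compute $v^{\circ}$ in the Euclidean case directly from Proposition \ref{proppolarvector}. Since $p_i=p_j=2$ implies $p_i^{\circ}=p_j^{\circ}=2$, plugging $p=2$ into the formula gives
\[
v_k^{\circ} = \left(\frac{|v_k|}{\|v\|_2}\right)^{2-1}\sign(v_k)\,\|v\|_2 = v_k,
\]
for every non-zero $v\in\mathbb{R}^d$, so $v^{\circ}=v$ and $\|v^{\circ}\|_{2}=\|v\|_2$. Consequently, in the Euclidean setting,
\[
\left[\frac{b-a}{\|b-a\|_{p_i}}\right]^{\circ} = \frac{b-a}{\|b-a\|_2}, \qquad
\left[\frac{c-b}{\|c-b\|_{p_j}}\right]^{\circ} = \frac{c-b}{\|c-b\|_2}.
\]

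Second, I would identify the relevant test directions. In the setting of Corollary \ref{dotproductSnell}, $F_{12}=\{x\in\mathbb{R}^2:(\alpha_1,\alpha_2)x=\beta\}$ is a one-dimensional affine subspace of $\mathbb{R}^2$; having no proper faces, the face $\mathcal{F}_b$ of $F_{12}$ whose relative interior contains $b$ coincides with $F_{12}$ itself. Therefore $L(\text{aff}(\mathcal{F}_b))=L(F_{12})$, and the set of admissible test vectors $v$ in Corollary \ref{dotProductSnellPropLOCCP} is exactly the one appearing in Corollary \ref{dotproductSnell}.

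Combining these two observations and applying Corollary \ref{dotProductSnellPropLOCCP} with $\omega_i=\omega_s$, $\omega_j=\omega_t$, the identity
\[
\omega_i \left(\left[\frac{b-a}{\|b-a\|_{p_i}}\right]^{\circ}\right)^{T} v
= \omega_j \left(\left[\frac{c-b}{\|c-b\|_{p_j}}\right]^{\circ}\right)^{T} v
\]
becomes
\[
\omega_s \left(\frac{b-a}{\|b-a\|_2}\right)^{T} v = \omega_t \left(\frac{c-b}{\|c-b\|_2}\right)^{T} v
\]
for every $v\in L(F_{12})$, which is exactly the statement of Corollary \ref{dotproductSnell}. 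There is no genuine obstacle here; the only point that deserves a line of justification is the evaluation of the polar-vector map at $p=2$, while the equality of affine hulls is immediate from the one-dimensionality of $F_{12}$ in $\mathbb{R}^2$.
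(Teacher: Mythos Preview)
Your argument captures the two essential ingredients the paper also uses: that $v^\circ=v$ when $p=2$, and that the relevant face has the right affine hull so that the admissible test vectors coincide with $L(F_{12})$. However, you overlook a technical point that the paper handles explicitly. Corollary \ref{dotProductSnellPropLOCCP} is derived from Proposition \ref{propLOCCP}, whose proof relies on the extreme-point representation of $F_{ij}$; the paper's standing assumption that the $P_i$ (and hence the $F_{ij}$) are \emph{polytopes} is therefore in force when that corollary is invoked. In the setting of Corollary \ref{dotproductSnell}, $P_1$ and $P_2$ are half-planes and $F_{12}$ is an unbounded line, so strictly speaking you cannot apply Corollary \ref{dotProductSnellPropLOCCP} directly to them.

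The paper closes this gap by first intersecting everything with a sufficiently large square $R\subseteq\mathbb{R}^2$: the polytopes $P_i':=P_i\cap R$ and $F_{12}':=F_{12}\cap R$ give an equivalent instance of Problem \eqref{SPP} in which $b\in\text{relint}(F_{12}')$ and $L(F_{12}')=L(F_{12})$. Once this reduction is made, your two observations apply verbatim and the conclusion follows. This is a routine step, but it is what makes the implication rigorous; you should insert it before identifying $\mathcal{F}_b$.
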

\begin{proof}
\MRM{Note that} there always exists \justo{a} square $R\subseteq \mathbb{R}^2$ such that Problem \eqref{SPP} stated in Corollary \ref{dotproductSnell} for polyhedra $P_1$, $P_2$ and $F_{12}$ is equivalent to the corresponding problem with polytopes $P_1':=P_1\cap R$, $P_2':=P_2\cap R$ and $F_{12}':=F_{12}\cap R$, where $b\in\text{int}(F_{12}')$ is also satisfied; it holds 
$L(F_{12}') = L(F_{12})$; the polar norm of the \justo{Euclidean} norm $\ell_2$ on $\mathbb{R}^d$ is itself and $v=v^\circ$ for each vector $v$ in $(\mathbb{R}^d,\|\cdot\|_2 )$.\qed
\end{proof}

Dot product form of Snell's law in Corollary \ref{dotproductSnell} states that, for any vector that can be consider in the separation boundary between the two ``media'', one has that the weighted dot product between this vector and the unit vector in the direction of the incoming ray is equal to the weighted dot product between the vector and the unit vector in the direction of the outgoing ray. Corollary \ref{dotProductSnellPropLOCCP} reveals that it is the polarity correspondence which is behind that relationship, although it is unnoticed due to the ``auto-polarity'' in $(\mathbb{R}^d,\|\cdot\|_2 )$. Specifically, $\left[ \frac{b-a}{\|b -a\|_{p_i}}\right]^\circ$ in Corollary \ref{dotProductSnellPropLOCCP} represents the polar of the unit vector in the direction of the incoming ray (equivalently, the unit vector in the direction of the polar of the incoming ray $\frac{(b-a)^\circ}{\|(b -a)^\circ\|_{p_i^\circ}}$)  
and $\left[ \frac{c-b}{\|c -b\|_{p_j}}\right]^\circ$ the polar of the unit vector in the direction of the outgoing ray (equivalently, the unit vector in the direction of the polar of the outgoing ray $\frac{(c-b)^\circ}{\|(c-b)^\circ\|_{p_j^\circ}}$).

We also provide a trigonometric interpretation of the generalized Snell's law based on the notion of $\ell_p$-angle.  

\begin{cor}[Snell's-like local optimality criterion for gate points - Cosine form]
\label{cosineSnellPropLOCCP}
\textcolor{white}{lineskip} \MRM{Assume the hypothesis of Corollary \ref{dotProductSnellPropLOCCP}.} Then, for each vector $v\in L(\text{\normalfont aff}(\mathcal{F}_b))$, the gate point $b$ satisfies
$$ \omega_i \|v\|_{p_i}\cos \varphi_{p_i^\circ}((b-a)^\circ, v) = \omega_j \|v\|_{p_j}\cos \varphi_{p_j^\circ}((c-b)^\circ, v) ,$$
or equivalently, 
$$ \omega_i \|v\|_{p_i}\cos \varphi_{p_i}(v,(b-a)^\circ) = \omega_j \|v\|_{p_j}\cos \varphi_{p_j}(v,(c-b)^\circ).$$ 
\end{cor}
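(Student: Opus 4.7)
The plan is to deduce this statement directly from the dot product form of Corollary \ref{dotProductSnellPropLOCCP} by rewriting each side using the definition of $\ell_p$-angle, together with two elementary properties of the polar vector map $v\mapsto v^\circ$ defined in Proposition \ref{proppolarvector}. There is no genuine obstacle here; the work is entirely bookkeeping with the polarity correspondence.

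First I would observe that the polar vector map is positively homogeneous: for any $\lambda>0$, the defining formula $v_k^\circ = (|v_k|/\|v\|_p)^{p-1}\sign(v_k)\|v\|_p$ gives $(\lambda v)^\circ = \lambda v^\circ$, and in particular
\[
\Bigl[\tfrac{b-a}{\|b-a\|_{p_i}}\Bigr]^\circ = \tfrac{(b-a)^\circ}{\|b-a\|_{p_i}},
\]
with the analogous identity for $c-b$. Second, by Proposition \ref{proppolarvector} the polar vector preserves length across polar spaces, i.e.\ $\|(b-a)^\circ\|_{p_i^\circ} = \|b-a\|_{p_i}$ and likewise for $c-b$.

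Combining these two facts, the left-hand side of the identity provided by Corollary \ref{dotProductSnellPropLOCCP} becomes
\[
\omega_i\Bigl(\Bigl[\tfrac{b-a}{\|b-a\|_{p_i}}\Bigr]^\circ\Bigr)^{\!T}\! v \;=\; \omega_i\,\frac{((b-a)^\circ)^T v}{\|(b-a)^\circ\|_{p_i^\circ}}.
\]
By Definition \ref{lpangle} applied in the normed space $(\mathbb{R}^d,\|\cdot\|_{p_i^\circ})$ (whose polar norm is $\|\cdot\|_{p_i}$, since $p_i^{\circ\circ}=p_i$), one has
\[
((b-a)^\circ)^T v \;=\; \|(b-a)^\circ\|_{p_i^\circ}\,\|v\|_{p_i}\cos\varphi_{p_i^\circ}((b-a)^\circ,v),
\]
so the whole left-hand side simplifies to $\omega_i\|v\|_{p_i}\cos\varphi_{p_i^\circ}((b-a)^\circ,v)$. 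The same manipulation on the right-hand side with $(c-b)$ in place of $(b-a)$ and $p_j$ in place of $p_i$ yields the first of the two claimed equalities.

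For the equivalent reformulation, I would invoke the symmetry relation $\varphi_p(v,v') = \varphi_{p^\circ}(v',v)$ noted directly after Definition \ref{lpangle} (which is immediate from the definition, since swapping the arguments swaps the roles of $p$ and $p^\circ$ in the denominator). Applying this with $p=p_i^\circ$ gives $\varphi_{p_i^\circ}((b-a)^\circ,v) = \varphi_{p_i}(v,(b-a)^\circ)$, and analogously for $(c-b)$ with $p_j$; substituting these identities in the first equality produces the second one, completing the argument.
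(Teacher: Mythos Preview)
Your proof is correct and follows essentially the same approach as the paper, which simply states that the result follows directly from Corollary \ref{dotProductSnellPropLOCCP} and Definition \ref{lpangle}; you have merely filled in the bookkeeping (positive homogeneity of $v\mapsto v^\circ$, the length-preserving property $\|v^\circ\|_{p^\circ}=\|v\|_p$, and the symmetry $\varphi_p(v,v')=\varphi_{p^\circ}(v',v)$) that the paper leaves implicit.
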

\begin{proof}
The result follows directly from Corollary \ref{dotProductSnellPropLOCCP} and Definition \ref{lpangle}. \qed
\end{proof}

The results above establish that, \justo{analogously as  geodesic paths in the WRP ``obey'' Snell's law at the intermediate face of two polyhedra} (see Proposition 3.5 (b) in \cite{mitchellWRP}), the geodesic paths in Problem \eqref{SPP} ``obey'' the generalized Snell's law.

\subsection{MISOCP formulations}
\label{ss:22}

\MRM{Our solution scheme consists in representing Problem (\ref{SPP}) as a mixed-integer second order cone problem (MISOCP). To obtain the MISCOP representation, we first derive a mathematical programming formulation for Problem (\ref{SPP}) which is mixed integer linear except for the norms used to model distances. As it is shown below, such a formulation can be rewritten as a MISOCP using the results in \cite{BPE2014,refraction2017}.

Let $G^{\text{D}}=(V,A)$ be the  the directed graph associated to the graph $G=(V,E)$, where $A=\{(i,j),(j,i): \{i,j\}\in E\}$.} 

\begin{theorem}
\label{theo2}
Problem \eqref{SPP} is equivalent to the following problem,  \ML{called} \normalfont(SPP-F1):
\begin{subequations}
\label{PB2}
\begin{align}
\min & \sum_{i\in V} \omega_i d_{i}\label{c:2a}\\
\mbox{s.t. }& \sum_{(s,j)\in A} z_{sj} - \sum_{(h,s)\in A} z_{hs} = 1, \qquad\label{c:2b}\\
& \sum_{(i,j)\in A} z_{ij} - \sum_{(h,i)\in A} z_{hi}= 0, \qquad\forall i\in V\setminus \{s,t\},\label{c:2c}\\
& \sum_{(t,j)\in A} z_{tj} -\sum_{(h,t)\in A} z_{ht} = -1, \qquad\label{c:2d}\\
& \sum_{(h,i)\in A} z_{hi} \leq  1, \qquad\forall i\in V,\label{c:2e}\\
& \sum_{(i,j)\in A} z_{ij} \leq  1, \qquad\forall i\in V,\label{c:2f}\\
& d_{s} \geq \|\sum_{(s,j)\in A} \sum_{e\in \textnormal{Ext}(F_{sj})}\lambda_{sje} e - x_s\|_{p_s}, \qquad\label{c:2g}\\
& d_{i} \geq \|\sum_{(i,j)\in A} \sum_{e\in \textnormal{Ext}(F_{ij})}\lambda_{ije} e - \sum_{(h,i)\in A} \sum_{e\in \textnormal{Ext}(F_{hi})}\lambda_{hie} e\|_{p_i}, \qquad\forall i\in V\setminus\{s,t\},\label{c:2h}\\
& d_{t} \geq \|x_t - \sum_{(h,t)\in A} \sum_{e\in \textnormal{Ext}(F_{ht})}\lambda_{hte} e\|_{p_t}, \qquad\label{c:2i}\\
& \sum_{e\in \textnormal{Ext}(F_{ij})}\lambda_{ije} = z_{ij}, \qquad\forall (i,j)\in A,\label{c:2j}\\
& d_{i}\geq 0, \qquad\forall i\in V,\label{c:2k}\\
& z_{ij}\in\{0,1\}, \qquad\forall (i,j)\in A,\label{c:2l}\\
& \lambda_{ije}\geq 0, \qquad\forall (i,j)\in A,e\in\textnormal{Ext}(F_{ij}).\label{c:2m}
\end{align}
\end{subequations}
\end{theorem}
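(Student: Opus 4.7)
\emph{Plan.} The plan is to prove equivalence of the two optimal values (and, more strongly, to exhibit a natural correspondence between feasible solutions of (SPP) and of (SPP-F1) that preserves objectives up to a nonnegative slack).

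\emph{Forward direction.} First, I would take an arbitrary simple path $\gamma = (s, i_1, \ldots, i_k, t) \in \Gamma$ and gate points $y_{si_1} \in F_{si_1}, \ldots, y_{i_kt} \in F_{i_kt}$, and turn them into a feasible solution of (SPP-F1): set $z_{ij} = 1$ exactly for the consecutive arcs of $\gamma$, pick convex-combination weights $\lambda_{ije}$ so that $\sum_e \lambda_{ije} e = y_{ij}$ whenever $z_{ij} = 1$ (and $\lambda_{ije} = 0$ otherwise), and set each $d_i$ equal to the norm appearing in (c:2g), (c:2h), or (c:2i), depending on whether $i = s$, an intermediate vertex of $\gamma$, $i = t$, or off the path (in which case $d_i = 0$). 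Conservation (c:2b)--(c:2d) then follows because $\gamma$ is an $s$-$t$ path in $G^{\mathrm{D}}$, the unit vertex-capacity bounds (c:2e)--(c:2f) follow from simplicity, and the remaining constraints hold by construction. The resulting $(z, \lambda, d)$ has $\sum_i \omega_i d_i$ equal to the length of $\gamma$ in (SPP), giving $\mathrm{opt}(\text{SPP-F1}) \leq \mathrm{opt}(\text{SPP})$.

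\emph{Reverse direction.} Next, I would take an arbitrary feasible $(z^*, \lambda^*, d^*)$ of (SPP-F1) and extract a simple $s$-$t$ path with associated gate points whose (SPP) length does not exceed $\sum_i \omega_i d_i^*$. The binary flow $z^*$ satisfies unit $s$-$t$ conservation with vertex capacity one, so by standard flow decomposition its support $A^* = \{(i,j) : z_{ij}^* = 1\}$ is the disjoint union of a single simple $s$-$t$ path $\gamma^* \in \Gamma$ together with a (possibly empty) collection of vertex-disjoint simple directed cycles sitting outside $\gamma^*$. For each $(i,j) \in A^*$, constraint (c:2j) together with $\lambda^*_{ije} \geq 0$ makes $y^*_{ij} := \sum_e \lambda^*_{ije} e$ a convex combination of the extreme points of $F_{ij}$, hence a point of $F_{ij}$. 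Restricting (c:2g), (c:2h), (c:2i) to $\gamma^*$ (where the inner sums over incoming and outgoing arcs collapse to a single nonzero term each) upper-bounds the (SPP) length of $\gamma^*$ with the gate points $y^*$ by $\sum_{i\in \gamma^*} \omega_i d_i^*$, which in turn is at most the full objective $\sum_{i\in V} \omega_i d_i^*$ by nonnegativity of each $\omega_i d_i^*$. Hence $D(x_s,x_t) \leq \mathrm{opt}(\text{SPP-F1})$, which combined with the previous paragraph gives equivalence.

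\emph{Main obstacle.} The crux is the structural decomposition of the integer flow $z^*$: the unit vertex-capacity bounds (c:2e)--(c:2f) are precisely what forbids branching and forces the $s$-$t$ flow to travel along one single simple path, leaving any additional mass in $A^*$ to form vertex-disjoint cycles disjoint from that path. Once this is secured, it remains only to note that cycles contribute nonnegatively to the objective and in fact can be set to cost zero by picking the same gate point on both sides of each cycle vertex, confirming that the formulation loses no optimality and that the two problems share the same optimal value.
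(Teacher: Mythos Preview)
Your proof is correct and follows essentially the same approach as the paper---interpreting \eqref{c:2b}--\eqref{c:2f} as encoding a simple $s$--$t$ path in $G^{\mathrm{D}}$ and \eqref{c:2g}--\eqref{c:2j} as locating gate points on the traversed faces---but you carry it out more carefully than the paper, which merely annotates the constraints. In particular, your explicit flow decomposition in the reverse direction (simple path plus vertex-disjoint cycles, forced by the unit in/out-degree bounds) fills in a point the paper leaves implicit. One small quibble: the side remark that cycle cost ``can be set to zero by picking the same gate point on both sides'' is not quite justified, since $F_{hi}$ and $F_{ij}$ need not share a point; the clean fix is simply to drop the cycle arcs (set $z=0$ there), which only lowers the objective---but this does not affect your main inequality, which already goes through by nonnegativity.
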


\begin{proof}

The domain of the decision variables in Problem (SPP-F1) is stated in \eqref{c:2k}-\eqref{c:2m}.

Variable $z_{ij}\in\{0,1\}$ indicates if arc $(i,j)\in A$ is chosen ($z_{ij}=1$) or not ($z_{ij}=0$) in a path from $s$ to $t$ in $G^{\text{D}}$. Then, constraints \eqref{c:2b}-\eqref{c:2d} are the standard flow conservation constraints to determine such a path. Constraints \eqref{c:2e} and \eqref{c:2f} are added to ensure that the path found is simple.

\ML{If $z_{ij}= 1$ then the path goes through a gate point in $F_{ij}$ and the corresponding $\lambda_{ije}$ determine this point. Then, (4e) and (4f)  together with (4j) imply that (4g)-(4i) provide the right value of the distance travelled by the path inside polytope $P_i$ if it is visited.}
\qed
\end{proof}

\MRM{Another formulation for Problem \eqref{SPP} is shown in Theorem \ref{theo3} below. This second formulation is derived using additional auxiliary variables.} To simplify the proof of Theorem \ref{theo3} we first present a technical lemma.

\begin{lem} \label{techlemma2}
Let $I,J$ be index sets. Let $\alpha_{ij}\geq 0$, $\beta_{i}\geq 0$, $\delta_{j}\geq 0$, $i\in I,j\in J$, be decision variables where variables $\beta_i$ are such that $\sum_{i\in I} \beta_i\leq 1$. Then, the set of constraints
\begin{subequations}
\begin{align}
&  \sum_{j\in J} \alpha_{ij} =  \beta_{i}, \qquad \forall i\in I,\label{c:98a}\\
&  \sum_{i\in I} \alpha_{ij} =  \delta_{j}, \qquad \forall j\in J,\label{c:98b}
\end{align}
\end{subequations}
imply
\begin{subequations}
\begin{align}
&  \alpha_{ij} \leq  \beta_{i}, &\forall i\in I,j \in J,\label{c:99a}\\
&  \alpha_{ij} \leq  \delta_{j},  &\forall i\in I,j \in J,\label{c:99b}\\
&  \alpha_{ij} \geq \beta_{i} +\delta_{j}-1, &\forall i\in I,j \in J.\label{c:99c}
\end{align}
\end{subequations}
\end{lem}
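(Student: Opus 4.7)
The plan is to derive the three implied inequalities in two steps, treating \eqref{c:99a}--\eqref{c:99b} as essentially immediate and reserving the real work (such as it is) for \eqref{c:99c}. Throughout, the non-negativity of all three groups of variables and the hypothesis $\sum_{i\in I}\beta_{i}\leq 1$ are the only facts I will invoke, besides \eqref{c:98a}--\eqref{c:98b}.

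First I would obtain \eqref{c:99a} by fixing $i\in I$ and $j\in J$ and writing
\[
\alpha_{ij}\;=\;\beta_i-\sum_{j'\in J,\,j'\neq j}\alpha_{ij'}\;\leq\;\beta_i,
\]
where the equality uses \eqref{c:98a} and the inequality uses $\alpha_{ij'}\geq 0$. The derivation of \eqref{c:99b} is symmetric, exchanging the roles of $I$ and $J$ and using \eqref{c:98b} instead.

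The interesting inequality is \eqref{c:99c}. Fix $i\in I$ and $j\in J$ and use \eqref{c:98b} to write
\[
\alpha_{ij}\;=\;\delta_j-\sum_{i'\in I,\,i'\neq i}\alpha_{i'j}.
\]
For each $i'\neq i$, the already established bound \eqref{c:99a} gives $\alpha_{i'j}\leq \beta_{i'}$, so
\[
\sum_{i'\in I,\,i'\neq i}\alpha_{i'j}\;\leq\;\sum_{i'\in I,\,i'\neq i}\beta_{i'}\;=\;\Bigl(\sum_{i'\in I}\beta_{i'}\Bigr)-\beta_i\;\leq\;1-\beta_i,
\]
where the last inequality invokes $\sum_{i'\in I}\beta_{i'}\leq 1$. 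Combining the two displays yields $\alpha_{ij}\geq \delta_j-(1-\beta_i)=\beta_i+\delta_j-1$, which is \eqref{c:99c}.

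There is no real obstacle here: the whole argument is a bookkeeping exercise that exploits the fact that the marginals $\beta_i$ lie in a ``probability-like'' simplex (sum at most one) while the row/column sums pin down exactly how much of $\beta_i$ and $\delta_j$ can be concentrated off the $(i,j)$ entry. The only subtlety worth flagging is that \eqref{c:99c} needs the \emph{sum} hypothesis $\sum_{i\in I}\beta_i\leq 1$ and not merely $\beta_i\leq 1$; without it the slack estimate $\sum_{i'\neq i}\beta_{i'}\leq 1-\beta_i$ would fail. The hypothesis $\sum_{j\in J}\delta_j\leq 1$ is not assumed, and indeed it is not needed, since the roles of $\beta$ and $\delta$ are not symmetric in the conclusion.
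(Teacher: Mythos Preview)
Your proof is correct and follows essentially the same approach as the paper: the bounds \eqref{c:99a}--\eqref{c:99b} are immediate from nonnegativity, and \eqref{c:99c} is obtained by expressing $\alpha_{ij}$ via the column sum \eqref{c:98b}, bounding the remaining terms $\alpha_{i'j}$ by $\beta_{i'}$ (equivalently, using $\beta_{i'}-\alpha_{i'j}\geq 0$ from \eqref{c:98a}), and then invoking $\sum_{i}\beta_i\leq 1$. The only cosmetic difference is that the paper writes the chain as $1-\delta_j\geq\sum_i(\beta_i-\alpha_{ij})\geq\beta_{i'}-\alpha_{i'j}$, which is the same computation rearranged.
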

\begin{proof}
 It is straightforward that constraints \eqref{c:98a}-\eqref{c:98b} imply \eqref{c:99a}-\eqref{c:99b}. To see that they also imply \eqref{c:99c}, let us consider $i'\in I,j\in J$. Since $\sum_{i\in I} \beta_i\leq 1$, we have $1- \delta_j \geq \sum_{i\in I} \beta_i - \delta_j$. On the other hand, by \eqref{c:98b}, we have $\sum_{i\in I} \beta_i - \delta_j = \sum_{i\in I} \beta_{i}-\sum_{i \in I} \alpha_{ij} =  \sum_{i\in I} (\beta_{i}- \alpha_{ij})$. Moreover, by \eqref{c:98a}, all the terms of the last summation are nonnegatives, and therefore $\sum_{i\in I} (\beta_{i}- \alpha_{ij})\geq \beta_{i'} - \alpha_{i'j}$. Taking into account the above, we can conclude that $1- \delta_j \geq \alpha_{i'j}- \beta_{i'}$, which is equivalent to \eqref{c:99c} for $i'\in I, j \in J$. \qed
\end{proof}

\begin{theorem}
\label{theo3}
Problem \eqref{SPP} is equivalent to the following problem, \MRM{called} \normalfont(SPP-F2):
\begin{subequations}
\label{PB4}
\begin{align}
\min & \sum_{i\in V} \omega_i d_{i}\label{c:4a}\\
\mbox{s.t. }& \sum_{(s,j)\in A} z_{sj} - \sum_{(h,s)\in A} z_{hs} = 1, \qquad \label{c:4b}\\
& \sum_{(i,j)\in A} z_{ij} - \sum_{(h,i)\in A} z_{hi}= 0, \qquad \forall i\in V\setminus \{s,t\},\label{c:4c}\\
& \sum_{(t,j)\in A} z_{tj} -\sum_{(h,t)\in A} z_{ht} = -1, \qquad \label{c:4d}\\
& \sum_{(h,i)\in A} z_{hi} \leq  1, \qquad \forall i\in V,\label{c:4e}\\
& \sum_{(i,j)\in A} z_{ij} \leq  1, \qquad \forall i\in V,\label{c:4f}\\
& d_{s} \geq \sum_{(s,j)\in A}\| \sum_{e\in \textnormal{Ext}(F_{sj})}\lambda_{sje} e - x_s z_{sj}\|_{p_s}, \qquad \label{c:4g}\\
& d_{i} \geq \sum_{(h,i)\in A}\sum_{(i,j)\in A}\|\sum_{e\in \textnormal{Ext}(F_{ij})}\Psi_{hije} e - \sum_{e\in \textnormal{Ext}(F_{hi})}\Phi_{hije} e\|_{p_i}, \qquad \forall i\in V\setminus\{s,t\},\label{c:4h}\\
& d_{t} \geq \sum_{(h,t)\in A} \|x_t z_{ht} - \sum_{e\in \textnormal{Ext}(F_{ht})}\lambda_{hte} e\|_{p_t}, \qquad \label{c:4i}\\
& \sum_{e\in \textnormal{Ext}(F_{ij})}\lambda_{ije} = z_{ij}, \qquad \forall (i,j)\in A,\label{c:4j}\\
&  \sum_{(i,j)\in A} \rho_{hij} =  z_{hi}, \qquad \forall i\in V\setminus\{s,t\},(h,i)\in A,\label{c:4k}\\
&  \sum_{(h,i)\in A} \rho_{hij} =  z_{ij}, \qquad \forall i\in V\setminus\{s,t\},(i,j)\in A,\label{c:4l}\\
&  \sum_{e\in \text{Ext}(F_{hi})} \Phi_{hije} =  \rho_{hij}, \qquad \forall i\in V\setminus\{s,t\},(h,i),(i,j)\in A,\label{c:4m}\\
&  \sum_{(i,j)\in A} \Phi_{hije} =  \lambda_{hie}, \qquad \forall i\in V\setminus\{s,t\},(h,i)\in A,e \in \text{Ext}(F_{hi}),\label{c:4n}\\
&  \sum_{e\in \text{Ext}(F_{ij})} \Psi_{hije} =  \rho_{hij}, \qquad \forall i\in V\setminus\{s,t\},(h,i),(i,j)\in A,\label{c:4o}\\
&  \sum_{(h,i)\in A} \Psi_{hije} =  \lambda_{ije}, \qquad \forall i\in V\setminus\{s,t\},(i,j)\in A,e \in \text{Ext}(F_{ij}),\label{c:4p}\\
& d_{i}\geq 0, \qquad \forall i\in V,\label{c:4q}\\
& z_{ij}\in\{0,1\}, \qquad \forall (i,j)\in A,\label{c:4r}\\
& \lambda_{ije}\geq 0, \qquad \forall (i,j)\in A, e\in\textnormal{Ext}(F_{ij}),\label{c:4s}\\
& \rho_{hij}\geq 0, \qquad \forall i\in V\setminus\{s,t\},(h,i),(i,j)\in A,\label{c:4t}\\
& \Phi_{hije}\geq 0, \qquad \forall i\in V\setminus\{s,t\},(h,i),(i,j)\in A, e\in \textnormal{Ext}(F_{hi}),\label{c:4u}\\
& \Psi_{hije}\geq 0, \qquad \forall i\in V\setminus\{s,t\},(h,i),(i,j)\in A, e\in \textnormal{Ext}(F_{ij}).\label{c:4v}
\end{align}
\end{subequations}
\end{theorem}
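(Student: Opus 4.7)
The plan is to establish equivalence by showing that both formulations admit feasible solutions with the same objective value, exploiting the fact that any feasible $z$-vector encodes a simple $s$-$t$ path and therefore has at most one incoming and one outgoing arc at each intermediate node. The role of the auxiliary variables $\rho_{hij}$, $\Phi_{hije}$, $\Psi_{hije}$ is to linearize, respectively, the products $z_{hi}z_{ij}$, $\lambda_{hie}z_{ij}$, and $\lambda_{ije}z_{hi}$. This linearization splits the single norm in constraint \eqref{c:2h} of (SPP-F1) into the sum of norms of \eqref{c:4h}, where only the term associated with the active pair of arcs is nonzero.

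First I would handle the direction (SPP-F2) $\Rightarrow$ (SPP-F1). Flow constraints \eqref{c:4b}--\eqref{c:4f} combined with \eqref{c:4r} force the arcs with $z_{ij}=1$ to form a simple $s$-$t$ path. At each intermediate node $i$ on the path there is a unique incoming arc $(h^*,i)$ and a unique outgoing arc $(i,j^*)$. Applying Lemma \ref{techlemma2} to the variables $\rho_{hij}$ with the equalities \eqref{c:4k}--\eqref{c:4l} yields $\rho_{h^*ij^*}=1$ and $\rho_{hij}=0$ for every other pair. Then \eqref{c:4m}--\eqref{c:4n} together with non-negativity force $\Phi_{hije}=0$ whenever $(h,j)\neq(h^*,j^*)$, while $\Phi_{h^*ij^*e}=\lambda_{h^*ie}$; an analogous argument using \eqref{c:4o}--\eqref{c:4p} gives $\Psi_{h^*ij^*e}=\lambda_{ij^*e}$. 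Consequently the sum in \eqref{c:4h} collapses to the single active term, which coincides with the right-hand side of \eqref{c:2h}. The source and target constraints \eqref{c:4g} and \eqref{c:4i} reduce to \eqref{c:2g} and \eqref{c:2i} because \eqref{c:4j} forces $\lambda_{sje}=0$ and $\lambda_{hte}=0$ along unused arcs. Hence the projection onto the $(z,\lambda,d)$-space of any feasible solution of (SPP-F2) is feasible in (SPP-F1) with the same objective.

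For the reverse direction, given a feasible solution $(z,\lambda,d)$ of (SPP-F1), I would define the auxiliary variables by
\begin{equation*}
\rho_{hij}:=z_{hi}z_{ij},\qquad \Phi_{hije}:=\lambda_{hie}\,z_{ij},\qquad \Psi_{hije}:=\lambda_{ije}\,z_{hi},
\end{equation*}
and verify directly that \eqref{c:4k}--\eqref{c:4p} hold using \eqref{c:2j} and flow conservation. Exactly as in the previous direction, the sum in \eqref{c:4h} then contains a single nonzero term at each intermediate node visited by the path, and this term matches the norm appearing in \eqref{c:2h}. Therefore the same value of $d_i$ remains feasible, and the objective is preserved.

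The main obstacle will be the collapse argument: one must rule out that the auxiliary variables redistribute mass so that the sum of norms in \eqref{c:4h} becomes strictly smaller than the single norm in \eqref{c:2h} (in which case (SPP-F2) would yield a lower optimal value). Lemma \ref{techlemma2} is precisely what prevents this, since the equalities \eqref{c:4k}--\eqref{c:4l} leave no slack and, coupled with binarity of $z$ and non-negativity, pin $\rho_{hij}$ to $\{0,1\}$ with a unique active coordinate; the same mechanism, applied to the equalities \eqref{c:4m}--\eqref{c:4p}, locks the $\Phi$ and $\Psi$ variables to their intended values along the active pair. Once this is established, the remaining verifications are routine bookkeeping on the flow and simplicity constraints, which are identical in both formulations.
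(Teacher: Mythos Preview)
Your argument is correct and rests on the same key observation as the paper's proof: the auxiliary variables $\rho_{hij}$, $\Phi_{hije}$, $\Psi_{hije}$ linearize the products $z_{hi}z_{ij}$, $\lambda_{hie}z_{hi}z_{ij}$, $\lambda_{ije}z_{hi}z_{ij}$, and constraints \eqref{c:4k}--\eqref{c:4p} pin them to those values whenever $z$ is binary. The organizational route differs, though. The paper does not compare (SPP-F2) directly with (SPP-F1); instead it introduces an intermediate problem in which the $d_i$-constraints for $i\in V\setminus\{s,t\}$ carry the explicit products $\|\cdot\|_{p_i}\,z_{hi}z_{ij}$, pulls those binaries inside the norm by homogeneity, and then argues that \eqref{c:4k}--\eqref{c:4p} are a valid (and, by Lemma~\ref{techlemma2}, strengthened) linearization of the resulting trilinear terms, dominating the standard McCormick inequalities. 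Your approach instead establishes a direct two-way solution correspondence with (SPP-F1), using Lemma~\ref{techlemma2} only to show that the auxiliary variables collapse to their intended $\{0,1\}$/$\lambda$ values along the active pair of arcs. Both routes are equally valid; the paper's derivation better explains \emph{where} the formulation comes from and why \eqref{c:4k}--\eqref{c:4p} are preferable to McCormick, while yours is a cleaner equivalence proof once Theorem~\ref{theo2} is in hand. One cosmetic point: your definitions $\Phi_{hije}=\lambda_{hie}z_{ij}$ and $\Psi_{hije}=\lambda_{ije}z_{hi}$ look different from the paper's $\lambda_{hie}\rho_{hij}$ and $\lambda_{ije}\rho_{hij}$, but they coincide because \eqref{c:2j} forces $\lambda_{hie}=0$ whenever $z_{hi}=0$.
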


\begin{proof}
\MRM{Since variables $z_{ij}$ codify a simple path from $s$ to $t$, observe that Problem \eqref{SPP} is equivalent to the problem}
\begin{align}
\min &  \sum_{i\in V} \omega_i d_i \label{SPPAuxProblem4}  \\
\mbox{s.t. }& \eqref{c:4b} \text{-} \eqref{c:4f},\nonumber\\
& d_{s} \geq \sum_{(s,j)\in A}\|\sum_{e\in \textnormal{Ext}(F_{sj})}\lambda_{sje} e - x_s\|_{p_s}z_{sj}, &\nonumber\\
& d_{i} \geq \sum_{(h,i)\in A}\sum_{(i,j)\in A}\|\sum_{e\in \textnormal{Ext}(F_{ij})}\lambda_{ije} e - \sum_{e\in \textnormal{Ext}(F_{hi})}\lambda_{hie} e\|_{p_i}z_{hi}z_{ij}, &\forall i\in V\setminus\{s,t\},\nonumber\\
& d_{t} \geq \sum_{(h,t)\in A}\|x_t - \sum_{e\in \textnormal{Ext}(F_{ht})}\lambda_{hte} e\|_{p_t}z_{ht}, &\nonumber\\
& \eqref{c:4j},\eqref{c:4q} \text{-} \eqref{c:4s}.\nonumber 
\end{align}

\MRM{Let us see that Problem \eqref{SPPAuxProblem4} is equivalent to Problem (SPP-F2).} In Problem \eqref{SPPAuxProblem4}, the equivalence of the following expressions holds:
\begin{align}
& \sum_{(s,j)\in A}\| \sum_{e\in \textnormal{Ext}(F_{sj})}\lambda_{sje} e - x_s \|_{p_s} z_{sj} = \sum_{(s,j)\in A}\| \sum_{e\in \textnormal{Ext}(F_{sj})}\lambda_{sje} z_{sj}e - x_s z_{sj}\|_{p_s} = \sum_{(s,j)\in A}\| \sum_{e\in \textnormal{Ext}(F_{sj})}\lambda_{sje} e - x_s z_{sj}\|_{p_s}, \nonumber\\
& \sum_{(h,t)\in A}\|x_t - \sum_{e\in \textnormal{Ext}(F_{ht})}\lambda_{hte} e\|_{p_t}z_{ht} = \sum_{(h,t)\in A} \|x_t z_{ht} - \sum_{e\in \textnormal{Ext}(F_{ht})}\lambda_{hte} z_{ht}e\|_{p_t} = \sum_{(h,t)\in A} \|x_t z_{ht} - \sum_{e\in \textnormal{Ext}(F_{ht})}\lambda_{hte} e\|_{p_t}, \nonumber 
\end{align}
by homogeneity of the norms and constraints \eqref{c:2j}.
Note that this equivalence leads to constraints \eqref{c:4g} and \eqref{c:4i} in Problem (SPP-F2). On the other hand, for each $i\in V\setminus\{s,t\}$, it follows:  
$$\sum_{(h,i)\in A}\sum_{(i,j)\in A}\|\sum_{e\in \textnormal{Ext}(F_{ij})}\lambda_{ije} e - \sum_{e\in \textnormal{Ext}(F_{hi})}\lambda_{hie} e\|_{p_i}z_{hi}z_{ij}=\sum_{(h,i)\in A}\sum_{(i,j)\in A}\|\sum_{e\in \textnormal{Ext}(F_{ij})}\lambda_{ije}z_{hi}z_{ij} e - \sum_{e\in \textnormal{Ext}(F_{hi})}\lambda_{hie}z_{hi}z_{ij} e\|_{p_i}.$$
The resulting products of three variables in the argument of the norms above can be linearized as it is  shown below. This linearization will lead to constraints \eqref{c:4h} and it will complete the proof.  

Consider  $i\in V\setminus\{s,t\}$, $(h,i),(i,j)\in A$. In Problem (SPP-F2), we represent the product $z_{hi}z_{ij}$ by means of variable $\rho_{hij}\geq 0$. The product $\lambda_{hie}z_{hi}z_{ij} = \lambda_{hie}\rho_{hij}$ is represented with variable $\Phi_{hije}\geq 0$, for each $e\in\text{Ext}(F_{hi})$. On the other hand, the product $\lambda_{ije}z_{hi}z_{ij} = \lambda_{ije}\rho_{hij}$ is represented with variable $\Psi_{hije}\geq 0$, for each $e\in\text{Ext}(F_{ij})$. We can set the values of the variables $\rho_{hij},\Phi_{hije},\Psi_{hije}$ using the following set of constraints:
\begin{subequations}
\begin{align}
&  \rho_{hij} \leq  z_{hi}, &\forall i\in V\setminus\{s,t\},(h,i),(i,j)\in A,\label{c:3k}\\
&  \rho_{hij} \leq  z_{ij}, &\forall i\in V\setminus\{s,t\},(h,i),(i,j)\in A,\label{c:3l}\\
&  \rho_{hij} \geq  z_{hi}+z_{ij}-1, &\forall i\in V\setminus\{s,t\},(h,i),(i,j)\in A, \label{c:3m}\\
&  \Phi_{hije} \leq  \rho_{hij}, &\forall i\in V\setminus\{s,t\},(h,i),(i,j)\in A, e\in \textnormal{Ext}(F_{hi}),\label{c:3n}\\
&  \Phi_{hije} \leq  \lambda_{hie},  &\forall i\in V\setminus\{s,t\},(h,i),(i,j)\in A, e\in \textnormal{Ext}(F_{hi}),\label{c:3o}\\
&  \Phi_{hije} \geq \lambda_{hie} +\rho_{hij}-1, &\forall i\in V\setminus\{s,t\},(h,i),(i,j)\in A, e\in \textnormal{Ext}(F_{hi}),\label{c:3p}\\
&  \Psi_{hije} \leq  \rho_{hij}, &\forall i\in V\setminus\{s,t\},(h,i),(i,j)\in A, e\in \textnormal{Ext}(F_{ij}),\label{c:3q}\\
&  \Psi_{hije} \leq  \lambda_{ije}, &\forall i\in V\setminus\{s,t\},(h,i),(i,j)\in A, e\in \textnormal{Ext}(F_{ij}),\label{c:3r}\\
&  \Psi_{hije} \geq \lambda_{ije} +\rho_{hij}-1,&\forall i\in V\setminus\{s,t\},(h,i),(i,j)\in A, e\in \textnormal{Ext}(F_{ij}). \label{c:3s}
\end{align}
\end{subequations}
Constraints \eqref{c:3k}-\eqref{c:3s} refer to a standard way of linearizing the product of variables. However, when integrated in our problem, they are dominated by constraints \eqref{c:4k}-\eqref{c:4p}. In particular, by Lemma \ref{techlemma2}, constraints \eqref{c:4k}-\eqref{c:4l}, \eqref{c:4m}-\eqref{c:4n} and \eqref{c:4o}-\eqref{c:4p} imply 
\eqref{c:3k}-\eqref{c:3m}, \eqref{c:3n}-\eqref{c:3p} and \eqref{c:3q}-\eqref{c:3s}, respectively. Therefore, constraints \eqref{c:4k}-\eqref{c:4p} lead to a stronger formulation than \eqref{c:3k}-\eqref{c:3s}.\qed
\end{proof}

\MRM{The above formulations can be rewritten as MISOCPs. The reformulation is possible because the norms in constraints \eqref{c:2g}-\eqref{c:2i} of Problem (SPP-F1), and the norms in constraints \eqref{c:4g}-\eqref{c:4i} of Problem (SPP-F2), can be represented as a set of second order cone constraints.} The procedure to construct these constraints is detailed in \cite{BPE2014,refraction2017}. However, in the following, we outline  the main ideas of this construction for the sake of completeness.

Assuming $p = \frac{q}{r}$ with $q, r\in\N\setminus\{0\}$, $q>r$ and $\gcd(q,r)=1$, any constraint in the form $Z \geq \|X-Y\|_{p}$, where $X$ and $Y$ are variable vectors in $\R^d$, can be equivalently rewritten as (see Lemma 9 in \cite{refraction2017}):

\begin{minipage}{\textwidth}
\begin{minipage}{0.85\textwidth}
$$\hspace*{3cm}
\left.\begin{array}{ll}
U_{k} + X_k - Y_{k}\ge 0,&\; k=1, \ldots, d, \\
U_{k} - X_k + Y_{k}\ge 0,& \; k=1, \ldots, d,\\
U_{k}^{q} \leq W_{k}^{r} Z^{q-r},&  k=1, \ldots, d,\\
\dsum_{k=1}^d W_{k} \leq Z,& \\
W_k \geq 0, & k=1, \ldots, d.\end{array}\right\}
$$
\end{minipage}
\begin{minipage}{0.1\textwidth}
\begin{equation}\label{in:norm}
\end{equation}
\end{minipage}
\end{minipage}

\noindent Except for constraints $U_{k}^{q} \leq W_{k}^{r} Z^{q-r}$, $k=1, \ldots, d$, all the constraints in \eqref{in:norm} are linear.
Using Lemma 1 in \cite{BPE2014}, each one of those nonlinear constraints can be represented as a set of at most $2\log q$ constraints of the form $\alpha^2 \leq \beta \delta$. Finally, note that inequality $\alpha^2 \leq \beta \delta$ is equivalent to inequality $\left\| \begin{pmatrix} 2\alpha\\\beta-\delta \end{pmatrix}\right\|_2 \leq \beta+\delta$, which can be seen as a second order cone constraint.   
 
We comment now the case when the norm in $Z \geq \|X-Y\|$ is polyhedral, as for instance the $\ell_1$ or $\ell_{\infty}$ norms. In that case, it is well-known that $Z \geq \|X-Y\|$ can be stated as a set of linear inequalities (see e.g. \cite{NP05,NPC03,WardWendell1985}). Let polytope $P$ be the unit ball of the norm $\|\cdot\|$. Then, $Z \geq \|X-Y\|$ is equivalent to 
$$ Z \geq e^T (X-Y), \; \forall e \in {\rm Ext}(P^\circ),$$ 
where $P^\circ$ is the polar set of $P$. \MRM{Thus, the above formulations can also handle polyhedral norms.}

\MRM{We have derived two different formulations for Problem \eqref{SPP}. Note that the number of norms in formulation (SPP-F1) is smaller than the one in formulation (SPP-F2). Therefore,} the number of variables needed for the MISOCP representation of the norms in  formulation (SPP-F1) is smaller than the one nedeed for formulation (SPP-F2). Furthermore, without taking into account those variables, the set of decision variables of formulation (SPP-F1) is contained in the one of formulation (SPP-F2). \justo{In summary, formulation (SPP-F1) has the advantage of having a smaller size than formulation (SPP-F2). Nevertheless, formulation (SPP-F2) outperforms formulation (SPP-F1) in that it provides a tighter continuous relaxation. This result is established below.}

\begin{prop}
\label{propRelaxation}
Let $\zeta,\zeta'\geq 0$ be the objective values of the continuous relaxations of formulation \textnormal{(SPP-F1)} and formulation \textnormal{(SPP-F2)}, respectively. Then, $\zeta\leq\zeta'$. 
\end{prop}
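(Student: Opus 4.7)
The plan is to show that every feasible solution of the continuous relaxation of \textnormal{(SPP-F2)} can be projected onto the variables of \textnormal{(SPP-F1)} to yield a feasible solution of the continuous relaxation of \textnormal{(SPP-F1)} with the same objective value. Since the objective function is $\sum_{i\in V}\omega_i d_i$ in both formulations and only involves the $d_i$ variables, the claim $\zeta\leq\zeta'$ follows immediately.

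Let $(z,\lambda,\rho,\Phi,\Psi,d)$ be feasible for the continuous relaxation of \textnormal{(SPP-F2)} and consider its projection $(z,\lambda,d)$. The flow conservation and simplicity constraints \eqref{c:4b}-\eqref{c:4f} coincide with \eqref{c:2b}-\eqref{c:2f}; constraint \eqref{c:4j} coincides with \eqref{c:2j}; and the domain restrictions \eqref{c:2k}-\eqref{c:2m} are inherited. Only the distance constraints \eqref{c:2g}-\eqref{c:2i} need to be verified. The first step would be to observe that, already at the LP-relaxation level, combining \eqref{c:2b} with \eqref{c:2f} and $z_{hs}\geq 0$ forces $\sum_{(s,j)\in A} z_{sj}=1$ (and, analogously, $\sum_{(h,t)\in A} z_{ht}=1$), so that $x_s = \sum_{(s,j)\in A} x_s z_{sj}$ and $x_t = \sum_{(h,t)\in A} x_t z_{ht}$.

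With this identification in hand, the triangle inequality applied term by term yields
\begin{equation*}
\Bigl\|\sum_{(s,j)\in A}\sum_{e\in\textnormal{Ext}(F_{sj})}\lambda_{sje}\, e - x_s\Bigr\|_{p_s} \leq \sum_{(s,j)\in A}\Bigl\|\sum_{e\in\textnormal{Ext}(F_{sj})}\lambda_{sje}\, e - x_s z_{sj}\Bigr\|_{p_s},
\end{equation*}
so that \eqref{c:4g} implies \eqref{c:2g}; the symmetric argument handles the sink and shows that \eqref{c:4i} implies \eqref{c:2i}. For an intermediate vertex $i\in V\setminus\{s,t\}$, the linking constraints \eqref{c:4n} and \eqref{c:4p} give the key identities
\begin{equation*}
\sum_{(h,i)\in A}\sum_{e\in\textnormal{Ext}(F_{hi})}\lambda_{hie}\, e = \sum_{(h,i)\in A}\sum_{(i,j)\in A}\sum_{e\in\textnormal{Ext}(F_{hi})}\Phi_{hije}\, e,
\end{equation*}
\begin{equation*}
\sum_{(i,j)\in A}\sum_{e\in\textnormal{Ext}(F_{ij})}\lambda_{ije}\, e = \sum_{(h,i)\in A}\sum_{(i,j)\in A}\sum_{e\in\textnormal{Ext}(F_{ij})}\Psi_{hije}\, e,
\end{equation*}
and one more application of the triangle inequality to the double sum shows that \eqref{c:4h} implies \eqref{c:2h}.

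Hence $(z,\lambda,d)$ is feasible for the relaxation of \textnormal{(SPP-F1)} with identical objective value, so $\zeta\leq\zeta'$. The only delicate point is ensuring that the flow variables at $s$ and $t$ sum to one in the LP relaxation; this is why the bounds \eqref{c:2e}-\eqref{c:2f} are essential to the argument. Everything else reduces to the triangle inequality together with the bookkeeping given by the linking constraints \eqref{c:4k}-\eqref{c:4p}.\qed
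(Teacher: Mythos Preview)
Your proof is correct and follows essentially the same approach as the paper: project a feasible solution of the relaxed \textnormal{(SPP-F2)} onto the \textnormal{(SPP-F1)} variables, use that $\sum_{(s,j)\in A}z_{sj}=1$ and $\sum_{(h,t)\in A}z_{ht}=1$ are forced by the flow and degree constraints even in the LP relaxation, and then derive \eqref{c:2g}--\eqref{c:2i} from \eqref{c:4g}--\eqref{c:4i} via the triangle inequality together with the linking identities \eqref{c:4n} and \eqref{c:4p}. The only extra remark in the paper is that these inequalities carry over unchanged to the second-order-cone representation of the norms, which is immediate.
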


\begin{proof}
Consider the continuous relaxations of Problem (SPP-F1) and Problem (SPP-F2). Note that the set of decision variables of the first problem is contained in the set of decision variables of the second one. Note also that the feasible domain for the set of variables $z_{ij}$ and $\lambda_{ije}$ in the first problem contains the corresponding feasible domain in the second problem. Let us assume that we are given a realization for this set of variables that is feasible in both problems. 

Then, since constraints \eqref{c:2b} and \eqref{c:2e} imply $\sum_{(s,j)\in A} z_{sj}=1$, and using  the triangular inequality, we have that
$$\|\sum_{(s,j)\in A} \sum_{e\in \textnormal{Ext}(F_{sj})}\lambda_{sje} e - x_s\|_{p_s} = \|\sum_{(s,j)\in A} (\sum_{e\in \textnormal{Ext}(F_{sj})}\lambda_{sje} e - x_s z_{sj})\|_{p_s} \leq  \sum_{(s,j)\in A}\| \sum_{e\in \textnormal{Ext}(F_{sj})}\lambda_{sje} e - x_s z_{sj}\|_{p_s}.$$
In a similar way, since constraints \eqref{c:2d} and \eqref{c:2f} imply $\sum_{(h,t)\in A} z_{ht}=1$, and  applying the triangular inequality, it follows that
$$ \|x_t - \sum_{(h,t)\in A} \sum_{e\in \textnormal{Ext}(F_{ht})}\lambda_{hte} e\|_{p_t} = \|\sum_{(h,t)\in A} (x_t z_{ht} -  \sum_{e\in \textnormal{Ext}(F_{ht})}\lambda_{hte} e)\|_{p_t} \leq \sum_{(h,t)\in A} \|x_t z_{ht} - \sum_{e\in \textnormal{Ext}(F_{ht})}\lambda_{hte} e\|_{p_t}.$$
On the other hand, by constraints \eqref{c:4n} and \eqref{c:4p}, and by the triangular inequality, for each $i\in V\setminus\{s,t\}$ we have that
$$\|\sum_{(i,j)\in A} \sum_{e\in \textnormal{Ext}(F_{ij})}\lambda_{ije} e - \sum_{(h,i)\in A} \sum_{e\in \textnormal{Ext}(F_{hi})}\lambda_{hie} e\|_{p_i}= \|\sum_{(i,j)\in A}\sum_{e\in \textnormal{Ext}(F_{ij})}\sum_{(h,i)\in A}\Psi_{hije} e - \sum_{(h,i)\in A}\sum_{e\in \textnormal{Ext}(F_{hi})}\sum_{(i,j)\in A}\Phi_{hije} e\|_{p_i}  $$
$$ = \|\sum_{(h,i)\in A} \sum_{(i,j)\in A}(\sum_{e\in \textnormal{Ext}(F_{ij})}\Psi_{hije} e - \sum_{e\in \textnormal{Ext}(F_{hi})}\Phi_{hije} e)\|_{p_i}\leq  \sum_{(h,i)\in A}\sum_{(i,j)\in A}\|\sum_{e\in \textnormal{Ext}(F_{ij})}\Psi_{hije} e - \sum_{e\in \textnormal{Ext}(F_{hi})}\Phi_{hije} e\|_{p_i}.$$

Finally we note that the above inequalities are preserved  
when the MISOCP representation of the norms is applied.
\qed
\end{proof}

\MRM{We compare the computational peformance of formulation (SPP-F1) and formulation (SPP-F2) in Section \ref{ss:23}.}

\subsubsection{Preprocessing}
\label{sss:221}

\justo{The continuous relaxation of formulation (SPP-F2) has shown to provide really tight lower bounds in our computational study. We take advantage of this observation to propose \MRM{an ad hoc preprocessing scheme for Problem \eqref{SPP}} to discard some regions that will not be used by shortest paths. 
\MRM{Let $\zeta_i$ be the objective value of the continuous relaxation of formulation (SPP-F2) but imposing that a shortest path must visit polyhedron $P_i$.} If $\zeta_i$ is greater than the objective value $\zeta^{\text{MIP}}$ of a feasible solution for Problem \eqref{SPP}, then polyhedron $P_i$ will not be visited for any shortest path. The implication is that all the variables associated to polyhedron $P_i$ in a formulation for Problem \eqref{SPP} can be removed. Since preprocessing all the $m$ regions in the problem can be computationally expensive, as it requires solving a SOCP problem for each polyhedron, in practice we only check a reduced number $m^*$  of the ``most promising'' polyhedra to be discarded. \MRM{Specifically, we process first the polyhedra $P_i$ furthest away from the terminal points $x_s,x_t$ according to the measure $\mathcal{D}(P_i) = \min_{x\in P_i} \{\ell_2(x_s,x) + \ell_2(x,x_t)\}$. 
The upper bound on the objective function $\zeta^{\text{MIP}}$  that we consider is the objective value of Problem \eqref{SPP} fixing the simple path $\gamma^*$ induced by the intersection of the segment $[x_s,x_t]$ with the set of polyhedra $\{P_1,...,P_m\}$.}}
We show below the basic pseudocode of the above preprocessing. The preprocessing algorithm returns a list $\mathcal{L}$ of polyhedra that are not visited in any shortest path for Problem \eqref{SPP}.

\begin{algorithm}
\caption{\MRM{Preprocessing for Problem \eqref{SPP}}}
\label{alg:preprocess}
\renewcommand{\thealgorithm}{}
\floatname{algorithm}{}
\begin{algorithmic}[1]
    \Begin
    \State compute the simple path $\gamma^*$ induced by the intersection of the segment $[x_s,x_t]$ with the set of polyhedra $\{P_1,...,P_m\}$
    \State compute the objective value  $\zeta^{\text{MIP}}$ of Problem \eqref{SPP} fixing the path $\gamma^*$
    \State sort polyhedra $P_{1},...,P_{m}$ in non-decreasing sequence with respect to $\mathcal{D}$: $P_{(1)},...,P_{(m)}$ such that $\mathcal{D}(P_{(1)})\geq...\geq \mathcal{D}(P_{(m)})$
    \State let $\mathcal{L}:=\emptyset$
    \For{$i=1,...,m^*$}
    \State compute $\zeta_{(i)}$ the objective value of the continuous relaxation of formulation (SPP-F2) but imposing that a shortest path must visit polyhedron $P_{(i)}$
    \If{$\zeta_{(i)}>\zeta^{\text{MIP}}$}
    \State  update $\mathcal{L} := \mathcal{L}\cup \{P_{(i)}\}$
    \EndIf
    \EndFor
    \State \Return $\mathcal{L}$
    \End
\end{algorithmic}

\end{algorithm}

\subsection{Computational experiments}
\label{ss:23}

\MRM{In order to evaluate the efficiency of the MISOCP formulations (SPP-F1) and (SSP-F2), as well as the preprocessing for Problem \eqref{SPP},} we have performed a series of computational experiments on a set of instances of the problem. The formulations have been coded in XPRESS solver version 8.9 keeping the default configuration for the different tolerances in the nonlinear module of the solver. The different instances have been generated using MATLAB R2018A which makes calls to XPRESS solver for solving the MISOCP programs. All experiments were run in a computer DellT5500 with a processor Intel(R) Xeon(R) with a CPU X5690 at 3.75 GHz and 48 GB of RAM memory.

We explain how we have generated the different instances of Problem \eqref{SPP}. We produce each instance of Problem \eqref{SPP} by randomly generating a polyhedral \justo{subdivision}. The process to generate the polyhedral \justo{subdivision} is as follows. We randomly generate $m$ points in the square $[0,10]^2$ and then compute the \justo{Euclidean} Voronoi diagram of these points in the square. The polyhedral \justo{subdivision} consists of the $m$ resulting \MRM{Voronoi cells}. Once generated the polyhedral \justo{subdivision} $\{P_1,...,P_m\}$, we \MRM{randomly} associate to each polytope one of the norms $\ell_1$, $\ell_{1.5}$, $\ell_2$, $\ell_3$ or $\ell_\infty$. \MRM{The weight associated to polytope $P_i$ is $\omega_i=1$, $i=1,...,m$. In all instances the starting and the terminal points are $x_s=(0,0)^T$ and $x_t=(10,10)^T$, respectively.} 

\MRM{We first compare the performance of formulations (SPP-F1) and (SPP-F2) on instances with a different number $m$ of polytopes. For each value of $m$, we randomly generate 5 subdivisions consisting of $m$ polytopes. We set a time limit for solving each instance of 7200 seconds and we report for each value of $m$:} 
\begin{itemize}
\item CPU(s): average, minimum and maximum CPU time over the 5 randomly generated instances.

\item Gap(\%): average, minimum and maximum gap over the 5 randomly generated instances.
\end{itemize} 
\MRM{The result obtained for formulation (SPP-F1) are shown in Table \ref{table-SPP-F1} and the ones obtained for formulation (SPP-F2) in Table \ref{table-SPP-F2}.}

\begin{table}[H]
\begin{center}
{\small
\begin{tabular}{|r|rrr|rrr|}
\hline
$m$ &  \multicolumn{3}{c|}{CPU(s)} & \multicolumn{3}{c|}{gap(\%)}  \\ \hline
         &	aver	&	min	&	max	&	aver	 & min	&	max		  \\ \hline
50 &  608.04 &  90.08 &  1962.01 &  0.01 &  0.00 &  0.01 \\
100 &  7200.00 &  7200.00 &  7200.00 &  3.62 &  1.84 &  6.79 \\
150 &  7200.00 &  7200.00 &  7200.00 &  10.14 &  7.55 &  11.95 \\
\hline
\end{tabular}
}
\end{center}
\caption{Results obtained for Formulation (SPP-F1).}\label{table-SPP-F1}
\end{table}

\begin{table}[H]
\begin{center}
{\small
\begin{tabular}{|r|rrr|rrr|}
\hline
$m$ &  \multicolumn{3}{c|}{CPU(s)} & \multicolumn{3}{c|}{gap(\%)}  \\ \hline
         &	aver	&	min	&	max	&	aver	 & min	&	max		  \\ \hline
50 &  32.27 &  17.54 &  46.34 &  0.00 &  0.00 &  0.01 \\
100 &  997.45 &  78.86 &  4037.64 &  0.01 &  0.00 &  0.01 \\
150 &  5572.75 &  164.27 &  7200.00 &  0.09 &  0.00 &  0.39 \\
200 &  3125.70 &  235.53 &  7200.00 &  0.06 &  0.00 &  0.26 \\
250 &  3951.26 &  1242.19 &  7200.00 &  0.11 &  0.00 &  0.44 \\
300 &  4537.31 &  335.25 &  7200.00 &  0.44 &  0.01 &  0.76 \\
350 &  4959.25 &  1206.76 &  7200.00 &  0.27 &  0.00 &  0.57 \\
400 &  7200.00 &  7200.00 &  7200.00 &  1.31 &  0.02 &  4.14 \\
450 &  6107.93 &  1700.82 &  7200.00 &  1.80 &  0.00 &  7.93 \\
500 &  6671.42 &  4511.06 &  7200.00 &  2.70 &  0.00 &  8.40 \\
\hline
\end{tabular}
}
\end{center}
\caption{Results obtained for formulation (SPP-F2).}\label{table-SPP-F2}
\end{table}

\MRM{We only run formulation (SPP-F1) for instances with up to 150 regions, since as it can be seen in Table \ref{table-SPP-F1} the formulation always reaches the time limit for this size and \justo{the gaps obtained for $m=150$ are already above $7.5\%$.} On the other hand, formulation (SPP-F2) has shown to be faster and we have run it for instances with up to 500 regions obtaining reasonable gaps. The computational dominance of formulation (SPP-F2) over formulation (SPP-F1) is explained by 
Proposition \ref{propRelaxation}: although for formulation (SPP-F1) the problems in the nodes of the branch-and-bound environment are solved faster (the size of these problems is smaller), a much lower number of nodes has to be explored in the case of formulation (SPP-F2) because} \justo{the tightness of the lower bounds provided by that formulation.}   

\MRM{We test now the preprocessing in Algorithm \ref{alg:preprocess}. Since as shown above formulation (SPP-F1) is outperformed by formulation (SPP-F2), we only test the effect of the preprocessing in the second formulation. For each instance, we decide to preprocess around the 10\% of the polytopes in the subdivision, i.e. in Algorithm \ref{alg:preprocess} we choose $m^*=\lceil \frac{m}{10}\rceil$. In order to perform a fair comparison between the performance of formulation (SPP-F2) with and without preprocessing, the preprocessing time is subtracted to the 7200 seconds assigned to the formulation for solving the instance. The results obtained for formulation (SPP-F2) applying the preprocessing are shown in Table \ref{table-SPP-F2-Pre}}.

\begin{table}[H]
\begin{center}
{\small
\begin{tabular}{|r|rrr|rrr|}
\hline
$m$ &  \multicolumn{3}{c|}{CPU(s)} & \multicolumn{3}{c|}{gap(\%)}  \\ \hline
         &	aver	&	min	&	max	&	aver	 & min	&	max		  \\ \hline
50 &  31.72 &  15.60 &  49.20 &  0.00 &  0.00 &  0.00 \\
100 &  175.34 &  157.02 &  206.57 &  0.00 &  0.00 &  0.01 \\
150 &  3813.09 &  258.64 &  7200.00 &  0.11 &  0.00 &  0.54 \\
200 &  3445.39 &  354.24 &  7200.00 &  0.01 &  0.00 &  0.03 \\
250 &  4918.68 &  1021.12 &  7200.00 &  0.21 &  0.00 &  0.65 \\
300 &  5028.60 &  705.18 &  7200.00 &  0.33 &  0.00 &  0.87 \\
350 &  6965.18 &  5999.88 &  7200.00 &  0.52 &  0.00 &  1.11 \\
400 &  7200.00 &  7200.00 &  7200.00 &  0.46 &  0.01 &  1.00 \\
450 &  6249.92 &  2406.81 &  7200.00 &  0.40 &  0.01 &  0.91 \\
500 &  6830.36 &  5312.42 &  7200.00 &  1.28 &  0.00 &  2.50 \\
\hline
\end{tabular}
}
\end{center}
\caption{Results obtained for formulation (SPP-F2) using the preprocessing in Algorithm \ref{alg:preprocess} with $m^*=\lceil \frac{m}{10}\rceil$.}\label{table-SPP-F2-Pre}
\end{table}

\MRM{Comparing Table \ref{table-SPP-F2} with Table \ref{table-SPP-F2-Pre} we conclude that, although the preprocessing strategy does not always reduce the running times, it reduces the gaps obtained within the fixed time limit in most of the instances, specially in those with a larger size.}

\subsection{Extensions}
\label{ss:24}

In this section we elaborate on different extensions that can be considered \MRM{for Problem (\ref{SPP})}. \ML{We  show that the mathematical programming formulations developed in the previous sections are also valid for these extensions provided that we modify the definition of the auxiliary graph.}

\subsubsection{Rapid transit boundaries}
\label{sss:241}

Consider again $\{P_1,...,P_m\}$, a \justo{subdivision} in polyhedra of $\mathbb{R}^d$ where each polyhedron $P_i$ is endowed with a different $\ell_{p_i}$-norm, $i=1,...,m$. Suppose now that each \MRM{maximal} common face $F_{ij}$ of two polyhedra $P_i$ and $P_j$ has also an associated $\ell_{p_{ij}}$-norm, possibly ``faster''  than both $\ell_{p_i}$ and $\ell_{p_j}$ (i.e. $p_{ij}>p_i,p_j$). Then, the common face of two polyhedra can be interpreted as a rapid transit boundary that can be used to reduce the length of the paths traversing both regions (see Section 4 in \cite{refraction2017}). \ML{This extension comes from the logistic and transportation literature  where it is used to model rapid transit lines \MRM{(see e.g. \cite{carrizosa-chia,NPC03,PuertoRCh2011})}.}

Let $\omega_i>0$ be the weight associated to polyhedra $P_{i}$ and $\omega_{ij}>0$  be the weight associated to polyhedra $F_{ij}$, for all different $i,j=1,...,m$ such that $P_i\cap P_j\neq \emptyset$. Consider  
the starting and the terminal points $x_s\in P_s$ and $x_t\in P_t$, $s\neq t$, and let the graph $G=(V,E)$ and the set $\Gamma$ be defined as in Section \ref{ss:21}. We can define the shortest path problem in this new setting as
\begin{align*}
D^{\text{RT}}(x_s,x_t) = \inf_{\scriptsize\begin{array}{c}(s,i_1,i_2,\cdots ,i_{k-1},i_k, t)\in \Gamma  \\ y_{s i_1}^1,y_{s i_1}^2\in F_{s i_1},\ldots,y_{i_kt}^1, y_{i_kt}^2\in F_{i_kt}\end{array}} &\omega_s\|x_s-y_{s i_1}^1\|_{p_{s}} + \omega_{s i_1}\|y_{si_1}^1-y_{s i_1}^2\|_{p_{s i_1}} \\&+ \omega_{i_1}\|y_{si_1}^2-y_{i_1 i_2}^1\|_{p_{i_1}} + \ldots +   \omega_{ i_k}\|y_{i_{k-1}i_k}^2-y_{i_k i_t}^1\|_{p_{i_k}}\\ \\ &+ \omega_{ i_k t}\|y_{i_k i_t}^1-y_{i_k i_t}^2\|_{p_{i_k t}} + \omega_{t}\|y_{i_k t}^2-x_t\|_{p_{t}}. \label{SPPRT} \tag{${\rm SPP^{RT}}$}
\end{align*}
\MRM{If $s= t$, then $D^{\text{RT}}(x_s,x_t) = \|x_t-x_s\|_{p_{s}}$.} 

\emph{MISOCP representation:} Let us see that Problem (\ref{SPPRT}) can be equivalently written as Problem (\ref{SPP}), and that therefore the MISOCP formulations presented in Section \ref{ss:22} are also valid for representing it. 
Let $\sigma : E \to \{m+1,...,m+|E|\}$ any bijective map between the set of edges $E$ and the set of natural numbers $\{m+1,...,m+|E|\}$. We can extend the set of polyhedra $\{P_1,..,P_m\}$ to $\{P_1,..,P_m,P_{m+1},...,P_{m+|E|}\}$ by defining, for each $h\in \{1,...,|E|\}$, $P_{m+h} = F_{ij}$ being $\sigma((i,j)) = m+h$. In the same way, $\ell_{p_{m+h}} = \ell_{p_{ij}}$ and  $\omega_{m+h} = \omega_{ij}$ being $\sigma((i,j)) = m+h$. In addition, for all $i\in \{1,...,m\}$ and $h\in\{1,...,|E|\}$ such that $P_{m+h}\subseteq P_i$ we denote $F_{i m+h}= P_i \cap P_{m+h} = P_{m+h}$ and $F_{m+h i} = F_{i m+h}$. 

At this point we can define the graph $G^{\text{RT}}=(V^{\text{RT}},E^{\text{RT}})$ such that $V^{\text{RT}}=\{1,...,m+|E|\}$ and $\{i,j\}\in A^{\text{RT}}$ iff $P_i\subseteq P_j$ or $P_j\subseteq P_i$, $i,j=1,...,m$, $i\not=j$. Now observe that Problem (\ref{SPPRT}) is equivalent to the following Problem 
(\ref{SPP}):
\begin{align*}
D(x_s,x_t) = \inf_{\scriptsize\begin{array}{c}(s,i_1,i_2,\cdots ,i_{k-1},i_k, t)\in \Gamma^{\text{RT}}  \\ y_{s i_1}\in F_{s i_1},\ldots,y_{i_kt}\in F_{i_kt}\end{array}} \omega_s\|x_s-y_{s i_1}\|_{p_{s}} + \omega_{i_1}\|y_{si_1}-y_{i_1 i_2}\|_{p_{i_1}} + \ldots + \nonumber\\
 \omega_{ i_k}\|y_{i_{k-1}i_k}-y_{i_k i_t}\|_{p_{i_k}} + \omega_{t}\|y_{i_k t}-x_t\|_{p_{t}}
\end{align*}
where $\Gamma^{\text{RT}}$ is the set of all the simple paths from $s$ to $t$ in $G^{\text{RT}}$.

\subsubsection{Non-simple paths}
\label{sss:242}

Another extension that can be considered concerns the number of visits allowed to each polyhedron. In Problem (\ref{SPP}), each polyhedron can only be visited once. However, allowing a path to visit a polyhedron more than once can lead to a decrease of the length of the shortest path. This situation is illustrated in Fig. 1 where the path that goes from $x_s$ to $x_t$ visits polyhedra $\{P_1,P_2,P_3\}$ in the order $P_3,P_2,P_1,P_2,P_1$, which is cheaper than \ML{the direct distance} between $x_s$ and $x_t$ in polyhedron $P_3$. \MRM{More examples of this behavior can be found
in Example 26 in \cite{refraction2017} \ML{(where its relation with the critical reflection physical phenomenon is also commented)}, in Fig. 1 in \cite{plastria2019I}, and for a case with more than two polyhedra in Figure 3.1 in \cite{mitchellWRP}}. 
\MRM{In the following we show that the MISOCP formulations given in Section \ref{ss:23} are valid for the shortest path problem in which each polyhedron can be visited at most twice, which we call Problem (SPP$^2$). As before, we reduce the problem to Problem (\ref{SPP}).}

\emph{MISOCP representation:} First, we extend the set of polyhedra $\{P_1,..,P_m\}$ to $\{P_1,..,P_m,P_{m+1},...,P_{2m}\}$ being $P_{m+i} = P_{i}$, $i=1,...,m$. Accordingly, we consider $F_{ij}=P_i \cap P_j$, $i,j=1,...,2m$, $i\not= j$ and $|i-j|\not= m$. The norm and the weight associated to polyhedron $P_{m+i}$ are $\ell_{m+i}=\ell_i$ and $\omega_{m+i}=\omega_i$, respectively, $i=1,...,m$. Now we can define the graph $G^{2}=(V^{2},E^{2})$ such that $V^{2}=\{1,...,2m\}$ and $(i,j)\in E^{2}$ iff $P_i\cap P_j\not= \emptyset$, $i,j=1,...,2m$, $i\not= j$ and $|i-j|\not= m$. \MRM{Finally note that Problem (\ref{SPP}) defined over the auxiliary graph $G^2$ is indeed Problem (SPP$^2$).} 

We note that by the ``no free lunch'' theorem, the price to pay for \MRM{this extension and the previous one} will be an increase in the number of (binary and continuous) variables of the formulations that will result in a higher computational time to solve the problems.

It is an interesting question to ask whether there exists a simple upper bound on the number of visits of a shortest path  to a generic traversed region. To answer , this question a rigorous analysis, similar to the one made in \cite{plastria2019I} for the case of two regions, should be carried out. However, this analysis is beyond of the scope of this paper.

\section{Location problems with demand points in a continuous framework with different norms}
\label{s:3}

In this section we study the problem of finding the location of a new facility with respect to a set of given demand points in such a way that the sum of the weighted distances between the new facility and the demand points is minimized. The difference with respect to the classical single-facility Weber problem \cite{NP05} is that distances are measured with the geodesic distance \MRM{induced by Problem (\ref{SPP}).}     

Let $\{P_1,...,P_m\}$ be a subdivision in polyhedra of $\mathbb{R}^d$ where each polyhedron $P_i$ is endowed with a different $\ell_{p_i}$-norm and has associated a weight $\omega_i>0$, $i=1,...,m$.  Consider the set of demand points $x_1,...,x_n\in \mathbb{R}^d$ with associated weights $w_1,...,w_n>0$. Let $N=\{1,...,n\}$ be the index set of the demand points and let $s_l$ be the index in $\{1,...,m\}$ such that $x_{l}\in P_{s_l}$, $l\in N$ (in case one of the points $x_1,...,x_n$ is on a face that belongs to two or more polyhedra, as we did in the previous section, we assume it belongs to exactly one of them which is determined in advance). In order to formally state the problem, we use the undirected graph $G=(V,E)$ such that $V=\{1,...,m\}$ and $\{i,j\}\in E$ iff $P_i$ and $P_j$ share a face, $i,j=1,...,m$, $i\not=j$. Then, the location problem can be stated as
\begin{align}
\inf_{t\in V, x\in P_t} \sum_{l\in N} w_l D_t (x_l, x), \label{LP} \tag{${\rm LocP}$}
\end{align}
where $D_t(x_l,x) = \omega_{s_l}\|x_l - x\|_{p_{s_l}}$ if $s_l = t$, \MRM{and} otherwise
\begin{align*}
D_t(x_l,x) = \inf_{\scriptsize\begin{array}{c}(s_l,i_1,i_2,\cdots ,i_{k-1},i_k, t)\in \Gamma_{s_l t}  \\ y_{s_l i_1}\in F_{s_l i_1},\ldots,y_{i_kt}\in F_{i_kt}\end{array}} \omega_{s_l}\|x_l-y_{s_l i_1}\|_{p_{s_l}} + \omega_{i_1}\|y_{s_l i_1}-y_{i_1 i_2}\|_{p_{i_1}} + \ldots + \nonumber\\
 \omega_{ i_k}\|y_{i_{k-1}i_k}-y_{i_k i_t}\|_{p_{i_k}} + \omega_{t}\|y_{i_k t}-x\|_{p_{t}}
\end{align*}
being $\Gamma_{s_l t}$ the set of all the simple paths from $s_l$ to $t$ in $G$. An illustrative instance of Problem \eqref{LP} is shown in Fig. \ref{instanceLP}.

\begin{figure}[H]
\centering
\includegraphics[scale=0.8]{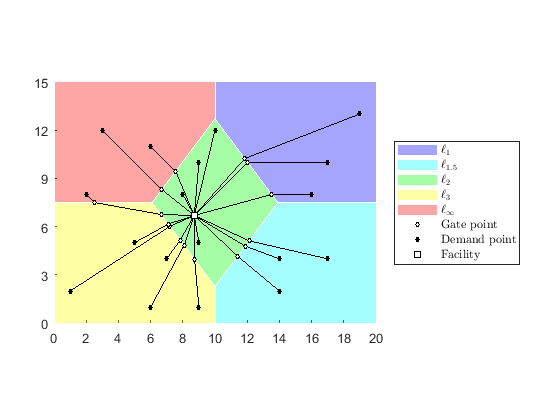}
\caption{An instance of Problem \eqref{LP}. }\label{instanceLP}
\end{figure}

By similar arguments as the ones given for Problem (\ref{SPP}), it is not difficult to see that Problem (\ref{LP}) is well-defined and \justo{one can reduce to polytopes the search for the facility and the gate points.} For this reason in the following w.l.o.g. we assume that $P_1,...,P_m$ are polytopes, which implies that their faces $F_{ij}$ are also polytopes. We also assume that polytopes $P_i, F_{ij}$ are encoded by \ML{their} set of extreme points $\text{Ext}(P_i), \text{Ext}(F_{ij})$. 

\MRM{Regarding the solvability of Problem (\ref{LP}), since Problem (\ref{SPP}) is not solvable in any algebraic computation model over the rational numbers, Problem (\ref{LP}) is not either.}

\subsection{MISOCP formulations}
\label{ss:31}

\MRM{We present two different mathematical programming formulations for Problem \eqref{LP} based on the ones given for Problem \eqref{SPP} in Section \ref{ss:22}. We start by the one based on formulation (SPP-F1). We recall that $G^{\text{D}}=(V,A)$ is the directed version of the undirected graph $G=(V,E)$, i.e. $A=\{(i,j),(j,i): \{i,j\}\in E\}$.}

\begin{theorem}
\label{theo6}
Problem \eqref{LP} is equivalent to the following problem, \MRM{called} \normalfont(LocP-F1):
\begin{subequations}
\label{PB6}
\begin{small}
\begin{align}
\min & \sum_{l \in N} \sum_{i\in V} w_l \omega_i  d_{i}^l\label{c:6a}\\
\mbox{s.t. }  & \sum_{i\in V} u_{i} = 1, \qquad \forall i\in V, \label{c:6b}\\
& \sum_{e\in \textnormal{Ext}(P_{i})} \mu_{ie} = u_i, \qquad \forall i\in V, \label{c:6c}\\
& \sum_{(s_l,j)\in A} z_{s_lj}^l - \sum_{(h,s_l)\in A} z_{hs_l}^l = 1 - u_{s_l}, \qquad \forall l\in N,\label{c:6d}\\
& \sum_{(i,j)\in A} z_{ij}^l - \sum_{(h,i)\in A} z_{hi}^l= -u_i, \qquad\forall  l\in N,i\in V\setminus \{s_l\},\label{c:6e}\\
& \sum_{(h,i)\in A} z_{hi}^l \leq  1-u_{s_l}, \qquad\forall l\in N,i\in V,\label{c:6f}\\
& \sum_{(i,j)\in A} z_{ij}^l \leq  1-u_{s_l}, \qquad\forall l\in N,i\in V,\label{c:6g}\\
& d_{s_l}^l \geq \| \sum_{e\in \textnormal{Ext}(P_{s_l})} \mu_{s_l e} e + \sum_{(s_l,j)\in A} \sum_{e\in \textnormal{Ext}(F_{s_lj})}\lambda_{s_lje}^l e - x_{l}\|_{p_{s_l}}, \qquad \forall  l\in N,\label{c:6h}\\
& d_{i}^l \geq \|\sum_{e\in \textnormal{Ext}(P_{i})} \mu_{ie} e + \sum_{(i,j)\in A} \sum_{e\in \textnormal{Ext}(F_{ij})}\lambda_{ije}^l e - \sum_{(h,i)\in A} \sum_{e\in \textnormal{Ext}(F_{hi})}\lambda_{hie}^l e\|_{p_i}, \qquad\forall  l\in N,i\in V\setminus\{s_l\},\label{c:6i}\\
& \sum_{e\in \textnormal{Ext}(F_{ij})}\lambda_{ije}^{l} = z_{ij}^l, \qquad\forall l\in N,(i,j)\in A,\label{c:6j}\\
& d_{i}^l\geq 0, \qquad\forall l\in N,i\in V, \label{c:6k}\\
& z_{ij}^l\in\{0,1\}, \qquad\forall l\in N,(i,j)\in A,\label{c:6l}\\
& \lambda_{ije}^l\geq 0, \qquad\forall l\in N,(i,j)\in A,e\in\textnormal{Ext}(F_{ij}),\label{c:6m}\\
& \mu_{ie}\geq 0, \qquad \forall i\in V, e\in\textnormal{Ext}(P_{i}),\label{c:6n}\\
& u_i\in \{0,1\}, \qquad\forall i\in V. \label{c:6o}
\end{align}
\end{small}
\end{subequations}
\end{theorem}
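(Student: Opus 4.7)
The strategy is analogous to that of Theorem \ref{theo2}, but with the additional feature that the index $t\in V$ of the polytope containing the new facility, and the location of the facility within that polytope, are both part of the decision. The plan is first to interpret each group of variables, then to verify that the constraints model Problem~\eqref{LP}, and finally to check that the objective function matches.

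The domain of the decision variables is given by \eqref{c:6k}--\eqref{c:6o}. The binary variable $u_i$ indicates whether the facility is located in polytope $P_i$, and \eqref{c:6b} forces exactly one such polytope to be selected. Given a selected $t\in V$ (i.e.\ $u_t=1$), \eqref{c:6c} forces $\sum_{e\in \textnormal{Ext}(P_t)} \mu_{te}=1$, so that $x:=\sum_{e\in \textnormal{Ext}(P_t)} \mu_{te} e\in P_t$ represents the facility location via a convex combination of the extreme points of $P_t$; for every $i\neq t$, \eqref{c:6c} forces all $\mu_{ie}=0$ so that only the selected polytope contributes to the location. For each demand point $l\in N$, the binary variable $z_{ij}^l$ indicates whether arc $(i,j)\in A$ belongs to the simple path used to reach $x_l$ from the facility (or vice versa). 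The continuous variables $\lambda_{ije}^l$ locate the corresponding gate points on the faces $F_{ij}$, and $d_i^l$ records the distance travelled by this path inside polytope $P_i$.

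Now I would interpret the constraints. Fix $l\in N$ and assume $u_t=1$ for some $t\in V$. Flow--conservation constraints \eqref{c:6d}--\eqref{c:6e} force $z^l$ to encode a path from $s_l$ to $t$: equation \eqref{c:6d} produces a unit of flow out of $s_l$ when $s_l\neq t$ (i.e.\ $u_{s_l}=0$), and zero if $s_l=t$; equation \eqref{c:6e} is the usual balance at any other node, except that a unit is absorbed at the unique node $t$ with $u_t=1$. Constraints \eqref{c:6f}--\eqref{c:6g} bound the in- and out-flow at every node by $1-u_{s_l}$, which simultaneously guarantees that the path is simple (each vertex is traversed at most once) and that when $s_l=t$ no path is generated at all. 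Constraints \eqref{c:6j} link $\lambda^l$ with $z^l$: if $z_{ij}^l=1$ then $\lambda_{ije}^l$ is a convex combination placing a gate point in $F_{ij}$, otherwise all $\lambda_{ije}^l$ vanish. Consequently, \eqref{c:6h} evaluates the distance inside $P_{s_l}$ from $x_l$ either to the first gate point of the path (if $s_l\neq t$) or to the facility itself via the term $\sum_e \mu_{s_l e}e$ (if $s_l=t$, in which case no $\lambda$ is active); and \eqref{c:6i} evaluates, for every intermediate polytope $P_i$ traversed by the path, the $\ell_{p_i}$-distance between consecutive gate points, while for $i=t$ the term $\sum_e \mu_{ie} e$ connects the last gate point with the facility location. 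Since $d_i^l\geq 0$ is minimized in the objective together with the weight $w_l\omega_i\geq 0$, the inequalities \eqref{c:6h}--\eqref{c:6i} hold as equalities at any optimal solution.

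With this reading, for each fixed choice of $t\in V$ and each demand point $l\in N$, the variables $z^l$ and $\lambda^l$ parametrize a simple path in $G$ from $s_l$ to $t$ together with a gate-point realization, and the contribution $\sum_{i\in V} w_l\omega_i d_i^l$ equals $w_l D_t(x_l,x)$ for $x=\sum_e \mu_{te}e\in P_t$ (with the convention $D_t(x_l,x)=\omega_{s_l}\|x_l-x\|_{p_{s_l}}$ if $s_l=t$, which is precisely what \eqref{c:6h} returns in that case, since the path reduces to the direct segment from $x_l$ to $x$). Summing over $l\in N$ gives the objective \eqref{c:6a}. The main obstacle, as with Theorem~\ref{theo2}, is to argue carefully the case $s_l=t$, where the path collapses and the $\mu$-term in \eqref{c:6h} must absorb the role of the terminal gate point; this is handled by the specific form of constraints \eqref{c:6d}--\eqref{c:6g}, which reduce to $z^l\equiv 0$ in that case. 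Finally, minimizing over all feasible $(u,\mu,z,\lambda,d)$ amounts to minimizing over all $t\in V$, all facility locations $x\in P_t$ and, for each demand point $l$, over all simple paths and gate points from $s_l$ to $t$, which is precisely Problem~\eqref{LP}. \qed
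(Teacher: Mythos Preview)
Your proof is correct and follows essentially the same approach as the paper: both interpret the variables $u_i,\mu_{ie},z_{ij}^l,\lambda_{ije}^l,d_i^l$ one by one, explain how constraints \eqref{c:6b}--\eqref{c:6j} encode the choice of facility polytope, the facility location, and the simple paths from each $s_l$ to $t$, and then appeal to the rationale of Theorem~\ref{theo2}/(SPP-F1). Your version is in fact more explicit than the paper's in handling the degenerate case $s_l=t$ and in noting that \eqref{c:6h}--\eqref{c:6i} are tight at optimality.
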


\begin{proof}
The domain of the decision variables in Problem (LocP-F1) is stated in \eqref{c:6k}-\eqref{c:6o}.

Variable $u_i\in \{0,1\}$ indicates whether the facility is located in polytope $P_i$ or not, $i\in V$. Then, constraint \eqref{c:6b} ensures that exactly one polytope of the set $\{P_1,...,P_m\}$ is chosen for placing the facility. Assuming $P_i$ is the selected polytope, variables $\mu_{ie}$, $e\in \text{Ext}(P_i)$, are used to represent $x$ as the convex combination of the extreme points of $P_{i}$, i.e., $x=\sum_{\text{Ext}(P_i)}\mu_{ie} e$. This representation is stated in constraints \eqref{c:6c}, which also set to zero the scalars associated to the extreme points of the non-selected polytopes. 

Variables $z_{ij}^l,\lambda_{ije}^l$ are similar to variables $z_{ij},\lambda_{ije}$ in formulation (SPP-F1) but replicated $n$ times, one for each $l\in N$, to compute the shortest path between $x_l$ and $x$. Then, if we assume that $u_t=1$ for some $t\in V$, note that constraints \eqref{c:6d}-\eqref{c:6g} force variables $z_{ij}^l$ to codify a simple path between $s_l$ and $t$ if $s_l\neq t$. If  $s_l= t$, constraints \eqref{c:6d}-\eqref{c:6g} set variables $z_{ij}^l$ to zero. Similarly to constraints \eqref{c:2j} in formulation (SPP-F1), constraints \eqref{c:6j} determine by means of variables $\lambda_{ije}^l$ the gate points in the traversed intermediate faces $F_{ij}$, and set to zero these variables for the non-traversed ones. 

\MRM{Taking into account the above and the rationale of formulation (SPP-F1) for representing Problem \eqref{SPP}, the equivalence between Problem (LocP-F1) and Problem \eqref{LP} follows.} \qed
\end{proof}

\MRM{The formulation for Problem  \eqref{LP} based on formulation (SPP-F2) is shown in Theorem \ref{theo7}.}

\begin{theorem}
\label{theo7}
Problem \eqref{LP} is equivalent to the following problem, \MRM{called} \normalfont(LocP-F2):
\justo{\begin{subequations}
\label{PB7}
\begin{align}
\min & \sum_{l\in N}\sum_{i\in V} w_l \omega_i d_{i}^l\label{c:7a}\\
\mbox{s.t. } & \sum_{i\in V} u_{i} = 1, \label{c:7b}\\
& \sum_{e\in \textnormal{Ext}(P_{i})} \mu_{ie} = u_i, \qquad \forall i\in V, \label{c:7c}\\
& \sum_{(s_l,j)\in A} z_{s_lj}^l - \sum_{(h,s_l)\in A} z_{hs_l}^l = 1 - u_{s_l}, \qquad \forall l\in N,\label{c:7d}\\
& \sum_{(i,j)\in A} z_{ij}^l - \sum_{(h,i)\in A} z_{hi}^l= -u_i, \qquad \forall  l\in N,i\in V\setminus \{s_l\},\label{c:7e}\\
& \sum_{(h,i)\in A} z_{hi}^l \leq  1-u_{s_l}, \qquad \forall l\in N,i\in V,\label{c:7f}\\
& \sum_{(i,j)\in A} z_{ij}^l \leq  1-u_{s_l}, \qquad \forall l\in N,i\in V,\label{c:7g}\\
& d_{s_l}^l \geq \sum_{(s_l,j)\in A}\| \sum_{e\in \textnormal{Ext}(F_{s_l j})}\lambda_{s_lje}^l e - x_l z_{s_lj}^l  \|_{p_{s_l}} \nonumber\\
&  + \|\sum_{e\in \textnormal{Ext}(P_{s_l})} \mu_{s_l e} e- x_l u_{s_l}\|_{p_{s_l}}, \qquad     \forall l\in N, \label{c:7h}\\
& d_{i}^l \geq \sum_{(h,i)\in A}\sum_{(i,j)\in A}\|\sum_{e\in \textnormal{Ext}(F_{ij})}\Psi_{hije}^l e - \sum_{e\in \textnormal{Ext}(F_{hi})}\Phi_{hije}^{l} e\|_{p_i}, \qquad  \nonumber\\
& + \sum_{(h,i)\in A}\|\sum_{e\in \textnormal{Ext}(P_i)}\Theta_{hie}^l e - \sum_{e\in \textnormal{Ext}(F_{hi})}\Upsilon_{hie}^{l} e\|_{p_i}, \qquad \forall l\in N, i\in V\setminus\{s_l\},\label{c:7i}\\
& \sum_{e\in \textnormal{Ext}(F_{ij})}\lambda_{ije}^l = z_{ij}^l, \qquad \forall l\in N, (i,j)\in A,\label{c:7j}\\
&  \sum_{(i,j)\in A} \rho_{hij}^l + \sum_{e \in \text{Ext}(P_{i})}
\Theta_{hie}^{l}  =  z_{hi}^l, \qquad \forall l\in N, i\in V\setminus\{s_l\},(h,i)\in A,\label{c:7k}\\
&  \sum_{(h,i)\in A} \rho_{hij}^l =  z_{ij}^l, \qquad \forall l\in N, i\in V\setminus\{s_l\},(i,j)\in A,\label{c:7l}\\
&  \sum_{e\in \text{Ext}(F_{hi})} \Phi^{l}_{hije} =  \rho_{hij}^l, \qquad \forall l\in N, i\in V\setminus\{s_l\},(h,i),(i,j)\in A,\label{c:7m}\\
&  \sum_{(i,j)\in A} \Phi^{l}_{hije}+\Upsilon_{hie}^{l} =  \lambda_{hie}^l, \qquad \forall l\in N, i\in V\setminus\{s_l\},(h,i)\in A, e \in \text{Ext}(F_{hi}),\label{c:7n}\\
&  \sum_{e\in \text{Ext}(F_{ij})} \Psi^{l}_{hije} =  \rho_{hij}^l, \qquad \forall l\in N, i\in V\setminus\{s_l\},(h,i),(i,j)\in A,\label{c:7o}\\
&  \sum_{(h,i)\in A} \Psi^{l}_{hije} =  \lambda_{ije}^l, \qquad \forall l\in N, i\in V\setminus\{s_l\},(i,j)\in A, e \in \text{Ext}(F_{ij}),\label{c:7p}\\
&  \sum_{(h,i)\in A} \sum_{e \in \text{Ext}(F_{hi})}
\Upsilon_{hie}^{l} = u_i, \qquad \forall l\in N, i\in V\setminus\{s_l\},\label{c:7q}\\
&  \sum_{(h,i)\in A} \sum_{e \in \text{Ext}(P_{i})}
\Theta_{hie}^{l} = u_i, \qquad \forall l\in N, i\in V\setminus\{s_l\},\label{c:7r}\\
&   \sum_{(h,i)\in A} \Theta_{hie}^{l} = \mu_{ie}, \qquad \forall l\in N, i\in V\setminus\{s_l\},e\in \text{Ext}(P_i),\label{c:7s}\\
& u_i\in \{0,1\}, \qquad \forall i\in V,\label{c:7t}\\
& \mu_{ie}\geq 0, \qquad \forall i\in V, e \in \textnormal{Ext}(P_i),\label{c:7u}\\
& d_{i}^l\geq 0, \qquad \forall l\in N,i\in V,\label{c:7v}\\
& z_{ij}^l\in\{0,1\},\qquad \forall l\in N,(i,j)\in A,\label{c:7w}\\
& \lambda_{ije}^l\geq 0, \qquad \forall l\in N,(i,j)\in A, e\in\textnormal{Ext}(F_{ij}),\label{c:7x}\\
& \rho_{hij}^l\geq 0, \qquad \forall l\in N,i\in V\setminus\{s_l\},(h,i),(i,j)\in A,\label{c:7y}\\
& \Phi_{hije}^{l}\geq 0, \qquad \forall l\in N,i\in V\setminus\{s_l\},(h,i),(i,j)\in A, e\in \textnormal{Ext}(F_{hi}),\label{c:7z}\\
& \Psi_{hije}^{l}\geq 0, \qquad \forall l\in N,i\in V\setminus\{s_l\},(h,i),(i,j)\in A, e\in \textnormal{Ext}(F_{ij}),\label{c:7aa}\\
& \Upsilon_{hie}^{l}\geq 0, \qquad \forall l\in N,i\in V\setminus\{s_l\},(h,i)\in A, e\in \textnormal{Ext}(F_{hi}),\label{c:7ab}\\
& \Theta_{hie}^{l}\geq 0, \qquad \forall l\in N,i\in V\setminus\{s_l\},(h,i)\in A, e\in \textnormal{Ext}(P_{i}).\label{c:7ac}
\end{align}
\end{subequations}}
\end{theorem}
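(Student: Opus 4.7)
The plan is to mimic the proof of Theorem \ref{theo3} for (SPP-F2), combining it with the facility-location mechanism used in the proof of Theorem \ref{theo6} for (LocP-F1). First, I would argue that Problem \eqref{LP} is equivalent to an auxiliary problem in which: (i) the binary variables $u_i$ select a terminal polytope $P_t$ via $\sum_{i\in V} u_i = 1$ and \eqref{c:7t}; (ii) the scalars $\mu_{ie}$ encode the facility location $x=\sum_{e\in \text{Ext}(P_t)} \mu_{te} e$ through \eqref{c:7c}; and (iii) for each $l\in N$, the flow variables $z_{ij}^l$ together with the gate-point variables $\lambda_{ije}^l$ describe a simple $s_l$-to-$t$ path in $G^{\text{D}}$ (reducing to no path when $s_l = t$), as enforced by \eqref{c:7d}-\eqref{c:7g} and \eqref{c:7j}.

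Second, I would expand the total length of each demand point's geodesic as a sum of norms of differences, replicating the argument of Theorem \ref{theo3} to obtain, for each intermediate polytope $i$ traversed by the path of demand point $l$, the triple products $\lambda_{\bullet}^l z_{hi}^l z_{ij}^l$ inside the corresponding norm. The genuinely new feature with respect to (SPP-F2) is that the terminal point of each path is now the \emph{variable} facility $x$, so whenever $u_i z_{hi}^l = 1$ the contribution in the terminal polytope $i$ takes the form
\begin{equation*}
\Bigl\lVert \sum_{e\in \text{Ext}(P_i)} \mu_{ie} u_i z_{hi}^l e - \sum_{e\in \text{Ext}(F_{hi})} \lambda_{hie}^l u_i z_{hi}^l e \Bigr\rVert_{p_i},
\end{equation*}
which produces two new kinds of triple products, $\mu_{ie} u_i z_{hi}^l$ and $\lambda_{hie}^l u_i z_{hi}^l$, that were absent in (SPP-F2).

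Third, I would introduce the linearizing variables exactly as listed: $\rho_{hij}^l$ for $z_{hi}^l z_{ij}^l$, $\Phi_{hije}^l$ for $\lambda_{hie}^l \rho_{hij}^l$, $\Psi_{hije}^l$ for $\lambda_{ije}^l \rho_{hij}^l$, and the two new ones $\Theta_{hie}^l$ for $\mu_{ie} z_{hi}^l$ and $\Upsilon_{hie}^l$ for $\lambda_{hie}^l u_i$ (both of which, on any integer feasible solution, coincide with the desired triple products $\mu_{ie} u_i z_{hi}^l$ and $\lambda_{hie}^l u_i z_{hi}^l$ thanks to the path-simplicity constraints and \eqref{c:7b}). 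The equality-form constraints \eqref{c:7k}-\eqref{c:7s} split the flow into $P_i$ between the pass-through case (captured by $\rho,\Phi,\Psi$) and the termination-in-$P_i$ case (captured by $\Theta,\Upsilon$), while simultaneously tying $\Theta$ to $\mu$ and $\Upsilon$ to $u$. As in the proof of Theorem \ref{theo3}, Lemma \ref{techlemma2} applied to each group of these equalities shows that the McCormick-type inequalities for every one of these products are implied, so the linearizations are valid and the formulation is in fact tighter than one built from those inequalities.

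The main obstacle will be to verify that the splitting constraint \eqref{c:7k}, namely $\sum_{(i,j)\in A} \rho_{hij}^l + \sum_{e\in \text{Ext}(P_i)} \Theta_{hie}^l = z_{hi}^l$, together with \eqref{c:7n} and the couplings \eqref{c:7q}-\eqref{c:7s}, really reproduce the intended logical disjunction ``the $l$-th path either exits $P_i$ through some face $F_{ij}$, or ends at the facility inside $P_i$'', and assign $\Theta_{hie}^l$ and $\Upsilon_{hie}^l$ their intended values without double counting. This is resolved by the same Lemma-\ref{techlemma2}-based bookkeeping used in Theorem \ref{theo3}, now applied jointly to the pairs \eqref{c:7k}-\eqref{c:7l}, \eqref{c:7m}-\eqref{c:7n}, \eqref{c:7o}-\eqref{c:7p} and the new pairs \eqref{c:7q}-\eqref{c:7r}, \eqref{c:7s}; once this bookkeeping is established, the equivalence with Problem \eqref{LP} follows exactly as in the combined arguments of Theorem \ref{theo3} and Theorem \ref{theo6}.
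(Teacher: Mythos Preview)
Your proposal is correct and follows essentially the same route as the paper: reduce \eqref{LP} to an auxiliary nonlinear program (the analogue of Problem \eqref{LocAuxProblem}) via the mechanism of Theorem \ref{theo6}, then linearize the resulting products exactly as in Theorem \ref{theo3}, with the two extra families $\Theta,\Upsilon$ handling the variable terminal point. The only organizational difference is that the paper makes the disjunction explicit by fixing $u_t=1$ and doing a case split ($s_{l'}=t$ versus $s_{l'}\neq t$, and within the latter, $i\neq t$ versus $i=t$); this case analysis is what forces the $\Theta,\Upsilon$ blocks and the $\rho,\Phi,\Psi$ blocks to decouple, after which \eqref{c:7k}-\eqref{c:7s} collapse to constraints of the exact shape needed for Lemma \ref{techlemma2}. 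You describe this same decoupling as the ``splitting'' enforced by \eqref{c:7k}, so the content is the same.
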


\begin{proof}
\MRM{Constraints \eqref{c:6b}-\eqref{c:6g},\eqref{c:6j},\eqref{c:6k}-\eqref{c:6o} in Problem (LocP-F1) are constraints \eqref{c:7b}-\eqref{c:7g},\eqref{c:7j}, \eqref{c:7t}-\eqref{c:7x} in Problem (LocP-F2). Thus, \ML{the statement} in the proof of Theorem \ref{theo6} for these constraints is also valid here.} 

Taking into account the above, observe that Problem \eqref{LP} can be equivalently written as
\begin{align}
\min &  \sum_{l\in N}\sum_{i\in V} w_l \omega_i d_{i}^l \label{LocAuxProblem}  \\
\mbox{s.t. }& \eqref{c:7b} \text{-} \eqref{c:7g},\nonumber\\
& d_{s_l}^l \geq \sum_{(s_l,j)\in A}\| \sum_{e\in \textnormal{Ext}(F_{s_l j})}\lambda_{s_lje}^l e - x_lz_{sj}  \|_{p_{s_l}} & \nonumber\\
&  + \|\sum_{e\in \textnormal{Ext}(P_{s_l})} \mu_{s_l e} e- x_l u_{s_l}\|_{p_{s_l}} , & \forall l\in N, \nonumber\\
& d_{i}^l \geq \sum_{(h,i)\in A}\sum_{(i,j)\in A}\|\sum_{e\in \textnormal{Ext}(F_{ij})}\lambda_{ije}^l e - \sum_{e\in \textnormal{Ext}(F_{hi})}\lambda_{hie}^{l} e\|_{p_i}z_{hi}z_{ij}(1-u_i)\hspace{-1cm} & \nonumber\\
& + \sum_{(h,i)\in A}\|\sum_{e\in \textnormal{Ext}(P_{s_l})} \mu_{s_l e} e- \sum_{e\in \textnormal{Ext}(F_{hi})}\lambda_{hie}^{l} e\|_{p_i} z_{hi}u_i,\hspace{-1cm} &\forall l\in N, i\in V\setminus\{s_l\},\nonumber\\
& \eqref{c:7j}, \eqref{c:7t} \text{-} \eqref{c:7x}. \nonumber 
\end{align}

Let us see that Problem (Loc-F2) is equivalent to Problem \eqref{LocAuxProblem}. Assume $u_{t}=1$ for some $t\in V$ and let $l'\in N$. First, suppose $s_{l'}=t$. Then, since constraints \eqref{c:7d}-\eqref{c:7g} set variables $z_{ij}^{l'}$ to zero,  it is straightforward to see that constraints \eqref{c:7j}-\eqref{c:7s} set to zero all variables $\lambda_{ije}^{l'},  \rho_{hij}^{l'},  \Phi_{hije}^{l'} ,\Psi_{hije}^{l'},\Upsilon_{hie}^{l'},\Theta_{hie}^{l'}$. Now, suppose $s_{l'}\neq t$. Then, on the one hand, constraints \eqref{c:7j}-\eqref{c:7s} impose 
\begin{subequations}
\begin{align}
&  \sum_{(i,j)\in A} \rho_{hij}^{l'}  =  z_{hi}^{l'}, &\forall i\in V\setminus\{s_{l'},t\},(h,i)\in A,\label{c:8k}\\
&  \sum_{(h,i)\in A} \rho_{hij}^{l'} =  z_{ij}^{l'}, &\forall i\in V\setminus\{s_{l'},t\},(i,j)\in A,\label{c:8l}\\
&  \sum_{e\in \text{Ext}(F_{hi})} \Phi^{l'}_{hije} =  \rho_{hij}^{l'}, &\forall i\in V\setminus\{s_{l'},t\},(h,i),(i,j)\in A,\label{c:8m}\\
&  \sum_{(i,j)\in A} \Phi^{l'}_{hije} =  \lambda_{hie}^{l'}, &\forall i\in V\setminus\{s_{l'},t\},(h,i)\in A, e \in \text{Ext}(F_{hi}),\label{c:8n}\\
&  \sum_{e\in \text{Ext}(F_{ij})} \Psi^{l'}_{hije} =  \rho_{hij}^{l'}, &\forall i\in V\setminus\{s_{l'},t\},(h,i),(i,j)\in A,\label{c:8o}\\
&  \sum_{(h,i)\in A} \Psi^{l'}_{hije} =  \lambda_{ije}^{l'}, &\forall i\in V\setminus\{s_{l'},t\},(i,j)\in A, e \in \text{Ext}(F_{ij}),\label{c:8p}
\end{align}
\end{subequations} 
and set to zero all variables $\Upsilon_{hie}^{l'},\Theta_{hie}^{l'}$ for $i\in V\setminus\{s_{l'},t\}$.
On the other hand, constraints \eqref{c:7j}-\eqref{c:7s} impose 
\begin{subequations}
\begin{align}
& \sum_{e \in \text{Ext}(F_{ht})}
\Theta_{hte}^{l'}  =  z_{ht}^{l'}, &\forall (h,t)\in A,\label{c:9j}\\
&  \Upsilon_{hte}^{l'} =  \lambda_{hte}^{l'}, &\forall (h,t)\in A, e \in \text{Ext}(F_{ht}),\label{c:9m}\\
&  \sum_{(h,t)\in A} \sum_{e \in \text{Ext}(F_{ht})}
\Upsilon_{hte}^{l'} = 1, &\forall (h,t)\in A,\label{c:9q}\\
&  \sum_{(h,t)\in A} \sum_{e \in \text{Ext}(F_{ht})}
\Theta_{hte}^{l'} = 1, &\forall (h,t)\in A,\label{c:9r}\\
&  \sum_{(h,t)\in A} \Theta_{hte}^{l'} = \mu_{te}, &\forall e\in \text{Ext}(P_t),\label{c:9s}
\end{align}
\end{subequations}
and set to zero all variables $\Phi_{htje}^{l'} ,\Psi_{htje}^{l'}$. Thus, by similar arguments to the ones made in the proof of Theorem \eqref{theo3}, it can be shown that the values of variables $\rho_{hij}^{l},\Phi_{hije}^l, \Psi_{hije}^l,\Theta_{hie}^l,\Upsilon_{hie}^l$ are set to 
the products $z_{hi}^lz_{ij}^l,\lambda_{hie}^l\rho_{hij}^l(1-u_i),\lambda_{ije}^l\rho_{hij}^l(1-u_i),\mu_{ie} z_{hi}^l u_i$ and $\lambda_{hie}^l z_{hi}^l u_i$, respectively. Finally, the equivalence follows by applying the homogeneity property of the norms in Problem \eqref{LocAuxProblem}.\qed
\end{proof}

\MRM{Formulation (LocP-F1) and formulation (LocP-F2) can be rewritten as MISOCPs using the reformulation technique described in Section \ref{ss:22}.}

\MRM{We establish below a result \justo{comparing} formulations (LocP-F1) and (LocP-F2) similar to the one in Proposition \label{propRelaxationLoc} for formulations (SPP-F1) and (SPP-F2).}

\begin{prop}
\label{propRelaxationLoc}
Let $\zeta,\zeta'\geq 0$ be the objective values of the continuous relaxations of formulation (LocP-F1) and formulation (LocP-F2), respectively. Then, $\zeta\leq\zeta'$. 
\end{prop}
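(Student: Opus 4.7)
The plan is to follow the same scheme as in the proof of Proposition~\ref{propRelaxation}. The decision variables of (LocP-F1) form a subset of those of (LocP-F2), and the shared constraints \eqref{c:6b}--\eqref{c:6g}, \eqref{c:6j} and the variable bounds \eqref{c:6k}--\eqref{c:6o} coincide with \eqref{c:7b}--\eqref{c:7g}, \eqref{c:7j} and \eqref{c:7t}--\eqref{c:7x}. Moreover the objective functions \eqref{c:6a} and \eqref{c:7a} are identical on the shared variables. Hence, given any feasible solution of the continuous relaxation of (LocP-F2), its projection onto the common variables will be feasible in the continuous relaxation of (LocP-F1) with the same objective value as soon as the two remaining norm constraints, \eqref{c:6h} and \eqref{c:6i}, are verified; this immediately yields $\zeta\leq\zeta'$.

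To verify \eqref{c:6h}, the first step is to establish the identity $u_{s_l}+\sum_{(s_l,j)\in A} z_{s_lj}^l=1$ in the continuous relaxation. This follows exactly as in the SPP case: combining the flow balance \eqref{c:7d} with \eqref{c:7g} applied at $i=s_l$ and with the nonnegativity of the $z$-variables forces $\sum_{(h,s_l)\in A} z_{hs_l}^l=0$, whence the identity. Using it, the argument of the norm in \eqref{c:6h} can be rewritten as
\[
\Bigl(\sum_{e\in\text{Ext}(P_{s_l})}\mu_{s_le}\,e - x_l u_{s_l}\Bigr) + \sum_{(s_l,j)\in A}\Bigl(\sum_{e\in\text{Ext}(F_{s_lj})}\lambda_{s_lje}^l\,e - x_l z_{s_lj}^l\Bigr),
\]
and the triangle inequality together with \eqref{c:7h} gives the required bound $d_{s_l}^l\geq \|\cdot\|_{p_{s_l}}$.

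To verify \eqref{c:6i}, I would use the auxiliary identities \eqref{c:7n}, \eqref{c:7p} and \eqref{c:7s} to write
\[
\sum_{e}\mu_{ie}\,e=\sum_{(h,i)\in A}\sum_{e}\Theta_{hie}^l\,e,\qquad
\sum_{(i,j)\in A}\sum_{e}\lambda_{ije}^l\,e=\sum_{(h,i)\in A}\sum_{(i,j)\in A}\sum_{e}\Psi_{hije}^l\,e,
\]
\[
\sum_{(h,i)\in A}\sum_{e}\lambda_{hie}^l\,e = \sum_{(h,i)\in A}\sum_{(i,j)\in A}\sum_{e}\Phi_{hije}^l\,e + \sum_{(h,i)\in A}\sum_{e}\Upsilon_{hie}^l\,e.
\]
Substituting into the norm argument in \eqref{c:6i} yields the decomposition
\[
\sum_{(h,i)\in A}\Bigl(\sum_{e}\Theta_{hie}^l\,e - \sum_{e}\Upsilon_{hie}^l\,e\Bigr) + \sum_{(h,i)\in A}\sum_{(i,j)\in A}\Bigl(\sum_{e}\Psi_{hije}^l\,e - \sum_{e}\Phi_{hije}^l\,e\Bigr),
\]
and applying the triangle inequality term by term, combined with \eqref{c:7i}, produces the required upper bound on $\|\cdot\|_{p_i}$ by $d_i^l$.

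Finally, exactly as at the end of the proof of Proposition~\ref{propRelaxation}, these triangle-inequality estimates are preserved when each norm is replaced by its MISOCP representation, so the inequality $\zeta\leq\zeta'$ transfers to the solver-level formulations. The main technical obstacle compared with Proposition~\ref{propRelaxation} is the bookkeeping underlying the decomposition used for \eqref{c:6i}: the auxiliary variables $\Theta_{hie}^l$ and $\Upsilon_{hie}^l$, which have no analogue in the SPP formulation, encode the coupling between a transit arc entering polytope $P_i$ and the event that $P_i$ hosts the new facility, and identifying the correct triple \eqref{c:7n}, \eqref{c:7p}, \eqref{c:7s} is precisely what exposes this coupling. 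Everything else is a direct adaptation of the SPP argument.
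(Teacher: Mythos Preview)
Your proof is correct and follows essentially the same approach as the paper: you project a feasible point of the relaxation of (LocP-F2) onto the common variables, use \eqref{c:7d} together with \eqref{c:7g} to obtain $\sum_{(s_l,j)\in A} z_{s_lj}^l=1-u_{s_l}$ for the verification of \eqref{c:6h}, and invoke the identities \eqref{c:7n}, \eqref{c:7p}, \eqref{c:7s} plus the triangle inequality for \eqref{c:6i}, exactly as the paper does. Your added commentary on the role of $\Theta_{hie}^l$ and $\Upsilon_{hie}^l$ is helpful but does not change the underlying argument.
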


\begin{proof}
Consider the relaxations of Problem (LocP-F1) and Problem (LocP-F2). Note that the set of decision variables of the first problem is contained in the set of decision variables of the second one. Note also that the feasible domain for the set of variables $z_{ij}^l, \lambda_{ije}^l,\mu_{ie}$ and $u_i$ in the first problem contains the corresponding feasible domain in the second problem.
Let us assume that we are given a realization for this set of variables that is feasible in both problems. 

\MRM{Let $l\in N$}. Then, since contraints \eqref{c:6d} and \eqref{c:6g} imply $ \sum_{(s_l,j)\in A} z_{s_lj}^l = 1-u_{s_l}$, and using  the triangular inequality, we have that
\begin{subequations}
\begin{align}
& \| \sum_{e\in \textnormal{Ext}(P_{s_l})} \mu_{s_l e} e + \sum_{(s_l,j)\in A} \sum_{e\in \textnormal{Ext}(F_{s_lj})}\lambda_{s_lje}^l e - x_{l}\|_{p_{s_l}}\nonumber\\
&  = \| \sum_{e\in \textnormal{Ext}(P_{s_l})} \mu_{s_l e} e + \sum_{(s_l,j)\in A} \sum_{e\in \textnormal{Ext}(F_{s_lj})}\lambda_{s_lje}^l e - x_{l}(\sum_{(s_l,j)\in A} z_{s_lj}^l+u_{s_l})\|_{p_{s_l}}\nonumber\\
& = \| \sum_{(s_l,j)\in A} (\sum_{e\in \textnormal{Ext}(F_{s_lj})}\lambda_{s_lje}^l e - x_{l}z_{s_lj}^l)+(\sum_{e\in \textnormal{Ext}(P_{s_l})} \mu_{s_l e} e-x_l u_{s_l})\|_{p_{s_l}} \nonumber\\
& \leq  \sum_{(s_l,j)\in A}\| \sum_{e\in \textnormal{Ext}(F_{s_l j})}\lambda_{s_lje}^l e - x_l z_{s_lj}  \|_{p_{s_l}}+ \|\sum_{e\in \textnormal{Ext}(P_{s_l})} \mu_{s_l e} e- x_l u_{s_l}\|_{p_{s_l}}.\nonumber
\end{align}
\end{subequations}

On the other hand, by constraints \eqref{c:7n}, \eqref{c:7p} and \eqref{c:7s}, and by the triangular inequality, for each $i\in V\setminus\{s,t\}$ we have that
\begin{subequations}
\begin{align}
& \|\sum_{e\in \textnormal{Ext}(P_{i})} \mu_{ie} e + \sum_{(i,j)\in A} \sum_{e\in \textnormal{Ext}(F_{ij})}\lambda_{ije}^l e - \sum_{(h,i)\in A} \sum_{e\in \textnormal{Ext}(F_{hi})}\lambda_{hie}^l e\|_{p_i}\nonumber\\
&  = \|\sum_{e\in \textnormal{Ext}(P_{i})} \sum_{(h,i)\in A} \Theta_{hie}^{l} e + \sum_{(i,j)\in A} \sum_{e\in \textnormal{Ext}(F_{ij})} \sum_{(h,i)\in A} \Psi^{l}_{hije}  e - \sum_{(h,i)\in A} \sum_{e\in \textnormal{Ext}(F_{hi})}(\sum_{(i,j)\in A} \Phi^{l}_{hije}+\Upsilon_{hie}^{l}) e\|_{p_i}\nonumber\\
& = \|\sum_{(h,i)\in A}\sum_{(i,j)\in A} (\sum_{e\in \textnormal{Ext}(F_{ij})}\Psi_{hije}^l e - \sum_{e\in \textnormal{Ext}(F_{hi})}\Phi_{hije}^{l} e) + \sum_{(h,i)\in A}(\sum_{e\in \textnormal{Ext}(P_i)}\Theta_{hie}^l e - \sum_{e\in \textnormal{Ext}(F_{hi})}\Upsilon_{hie}^{l} e)\|_{p_i}\nonumber\\
& \leq  \sum_{(h,i)\in A}\sum_{(i,j)\in A}\|\sum_{e\in \textnormal{Ext}(F_{ij})}\Psi_{hije}^l e - \sum_{e\in \textnormal{Ext}(F_{hi})}\Phi_{hije}^{l} e\|_{p_i} + \sum_{(h,i)\in A}\|\sum_{e\in \textnormal{Ext}(P_i)}\Theta_{hie}^l e - \sum_{e\in \textnormal{Ext}(F_{hi})}\Upsilon_{hie}^{l} e\|_{p_i}.\nonumber
\end{align}
\end{subequations}

Finally we note that the above inequalities are preserved  
when the MISOCP representation of the norms is applied.
\qed
\end{proof}

\MRM{As done for Problem \eqref{SPP}, we also present a preprocessing scheme for Problem \eqref{LP}. After that, we report a computational study to evaluate the performance of the proposed MISOCP formulations and the preprocessing strategy.}

\subsubsection{Preprocessing}
\label{sss:311}

\MRM{As it happened with formulation (SPP-F2) in the case of Problem \eqref{SPP}, the continuous relaxation of formulation (LocP-F2) provides \justo{tighter} lower bounds for Problem \eqref{LP}. Thus, we can design a preprossecing algorithm for Problem \eqref{LP} inspired in Algorithm \ref{alg:preprocess} for Problem \eqref{SPP}. We use this preprocessing to discard some regions in which placing the new facility is not optimal. 
Let $\zeta_i$ be the objective value of the continuous relaxation of formulation (LocP-F2) but imposing that the new facility is located in polyhedron $P_i$. If $\zeta_i$ is greater than the objective value $\zeta^{\text{MIP}}$ of a feasible solution for Problem \eqref{LP}, then placing the new facility in polyhedron $P_i$ is not optimal and therefore all the variables associated \justo{to this decision} in a formulation for Problem \eqref{LP} can be removed. We preprocess a number $m^*$ of the  $m$ regions, since preprocessing all of them can be computationally expensive.
We decide to process first the polyhedra $P_i$ with a higher value of the measure $\mathcal{F}(P_i) = \min_{x\in P_i} \sum_{l\in N} \|x_l - x\|_2$. The upper bound on the objective function $\zeta^{\text{MIP}}$  that we consider is the objective value of Problem \eqref{LP} fixing for each demand point $x_l$ the simple path induced by the intersection of the segment $[x_l,x^*]$ with the set of polyhedra $\{P_1,...,P_m\}$, where $x^*$ is the optimal solution of the problem $\min_{x\in \mathbb{R}^d} \sum_{l\in N} \|x_l - x\|_2$. The basic pseudocode of the above preprocessing is shown \justo{in Algorithm \ref{alg:preprocessLoc}}. The preprocessing algorithm for Problem \eqref{LP} returns a list $\mathcal{L}$ of polyhedra in which placing the new facility is not optimal.}

\begin{algorithm}
\caption{\MRM{Preprocessing for Problem \eqref{LP}}}
\label{alg:preprocessLoc}
\renewcommand{\thealgorithm}{}
\floatname{algorithm}{}
\begin{algorithmic}[1]
    \Begin
    \State compute the optimal solution $x^*$ of the problem $\min_{x\in \mathbb{R}^d} \sum_{l\in N} \|x_l - x\|_2$
    \State compute the objective value $\zeta^{\text{MIP}}$ of Problem \eqref{LP} fixing for each demand point $x_l$ the simple path induced by the intersection of the segment $[x_l,x^*]$ with the set of polyhedra $\{P_1,...,P_m\}$ 
    \State sort polyhedra $P_{1},...,P_{m}$ in non-decreasing sequence with respect to $\mathcal{F}$: $P_{(1)},...,P_{(m)}$ such that $\mathcal{F}(P_{(1)})\geq...\geq \mathcal{F}(P_{(m)})$
    \State let $\mathcal{L}:=\emptyset$
    \For{$i=1,...,m^*$}
    \State compute $\zeta_{(i)}$ the objective value of the continuous relaxation of formulation (LocP-F2) but imposing that the new facility is located in polyhedron $P_i$
    \If{$\zeta_{(i)}>\zeta^{\text{MIP}}$}
    \State  update $\mathcal{L} := \mathcal{L}\cup \{P_{(i)}\}$
    \EndIf
    \EndFor
    \State \Return $\mathcal{L}$
    \End
\end{algorithmic}

\end{algorithm}

\subsection{Computational experiments}
\label{ss:32}

\MRM{We evaluate the performance of the MISOCP formulations and the preprocessing strategy for Problem \eqref{LP} following a similar scheme to the one described in Section \ref{ss:24}. The details about the software and the computer that we use in the  computational study have been already commented in Section \ref{ss:24}.}

We generate instances for Problem \eqref{LP} from data sets considered in the literature. Following the computational experience in \cite{refraction2017}, we have considered the demand points on the plane from the data sets in \cite{eilon-watson,Parlar1994,bsss} (see Fig. \ref{demandPoints}). Given one of those demand points data sets, we produce a polyhedral subdivision in a rectangle containing the demand points following the procedure described in Section \ref{ss:24}. In all generated instances, weights associated to polyhedra and demand points are set to 1, except in the instances generated from the data sets in \cite{bsss} for which we keep the weights associated to the demand points in the original paper.
As in Section \ref{ss:24} we set a time limit for each instance of 7200 seconds and we report for each value of $m$ the average, minimum and maximum value for CPU(s) and Gap(\%) over 5 instances.
The results obtained for formulations (LocP-F1) and (LocP-F2), as well as their combinations with the preprocessing in Algorithm \ref{alg:preprocess} (referred as ``Pre. + (LocP-F1)'' and ``Pre. + (LocP-F2)''), are shown in Tables \ref{tableL1}-\ref{tableL5}. In this case we decide to preprocess all the polytopes in the subdivision (we choose $m^*=m$ in Algorithm \ref{alg:preprocessLoc}). As in Section \ref{ss:24}, the preprocessing time is subtracted to the 7200 seconds assigned to the formulation for solving the instance.

\begin{figure}[H] 
  \subfloat[The 4-points data set in \cite{Parlar1994} within the square $\text{[}0,12\text{]}^2$.]{
	\begin{minipage}[c][1\width]{
	   0.45\textwidth}
	   \centering
	   \includegraphics[width=1.15\textwidth]{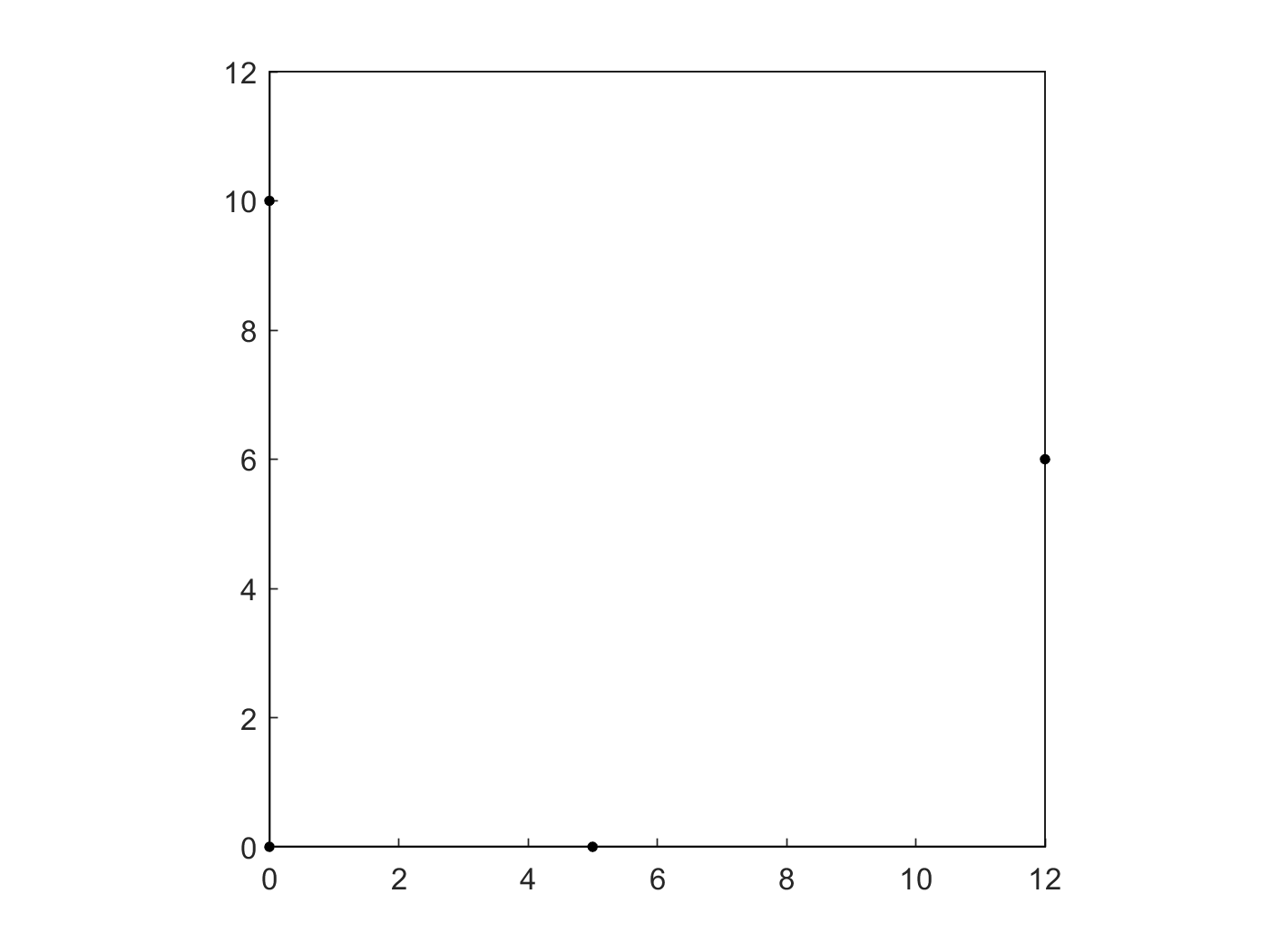}
	\end{minipage}}
 \hfill 	
  \subfloat[The 18-points data set in \cite{Parlar1994} within the rectangle $\text{[}0,20\text{]$\times$[}0,15\text{]}$.]{
	\begin{minipage}[c][1\width]{
	   0.45\textwidth}
	   \centering
	   \includegraphics[width=1.15\textwidth]{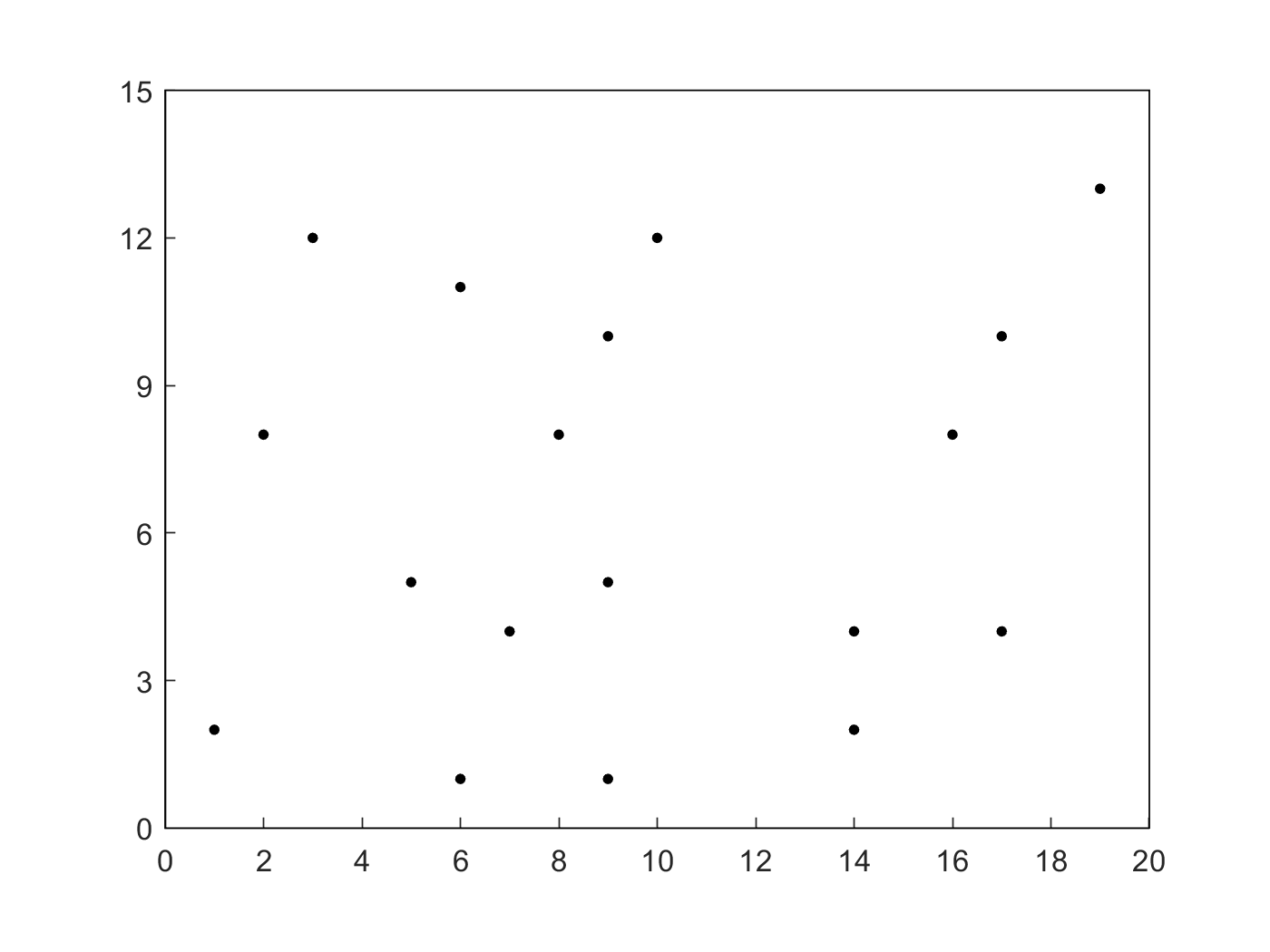}
	\end{minipage}}
\newpage
\hfill
  \subfloat[The 30-points data set in \cite{bsss} within the square $\text{[}0,10\text{]}^2$.]{
	\begin{minipage}[c][1\width]{
	   0.45\textwidth}
	   \centering
	   \includegraphics[width=1.15\textwidth]{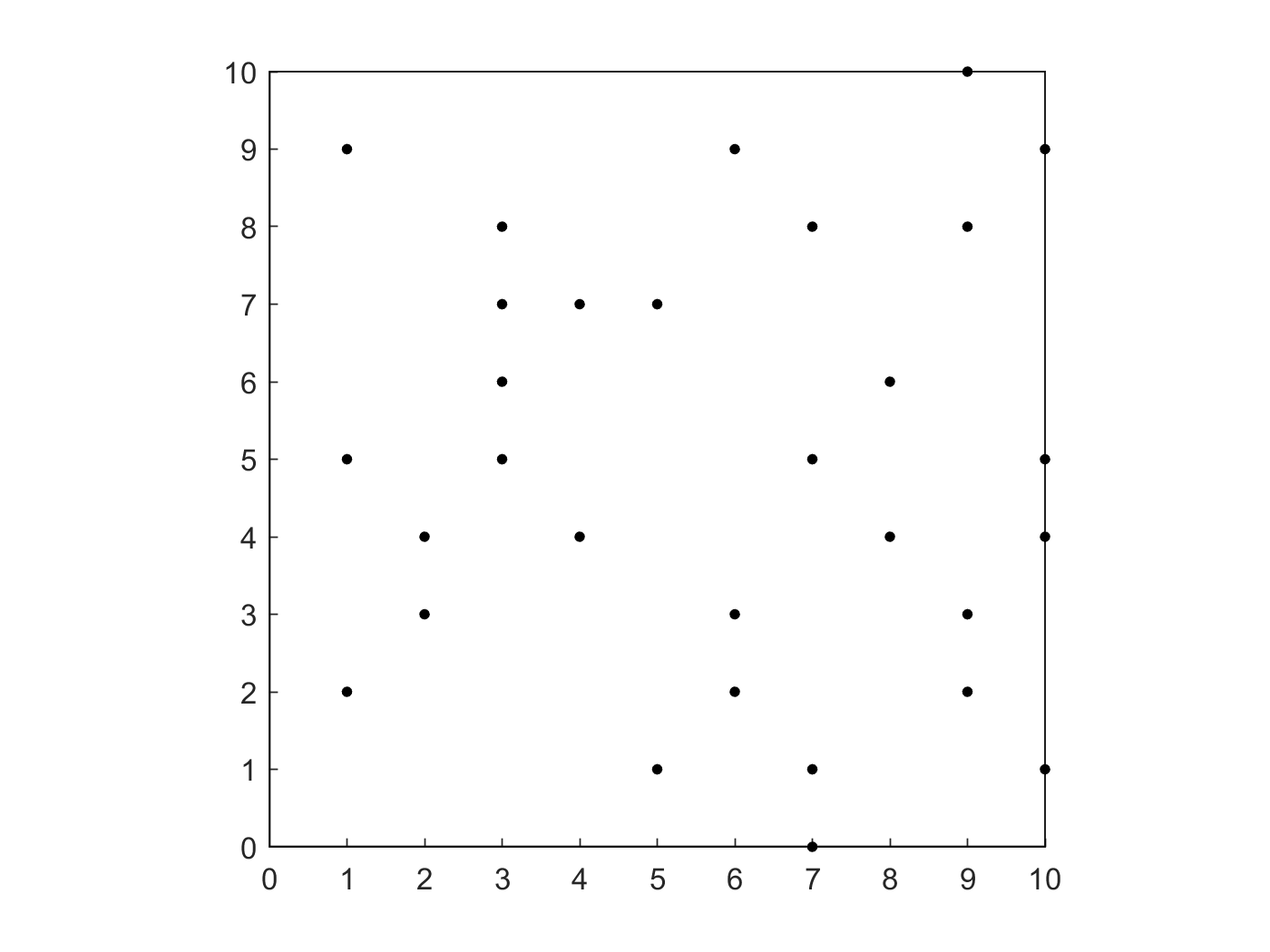}
	\end{minipage}}
 \hfill	
  \subfloat[The 50-points data set in \cite{bsss} within the square $\text{[}0,20\text{]}^2$.]{
	\begin{minipage}[c][1\width]{
	   0.45\textwidth}
	   \centering
	   \includegraphics[width=1.15\textwidth]{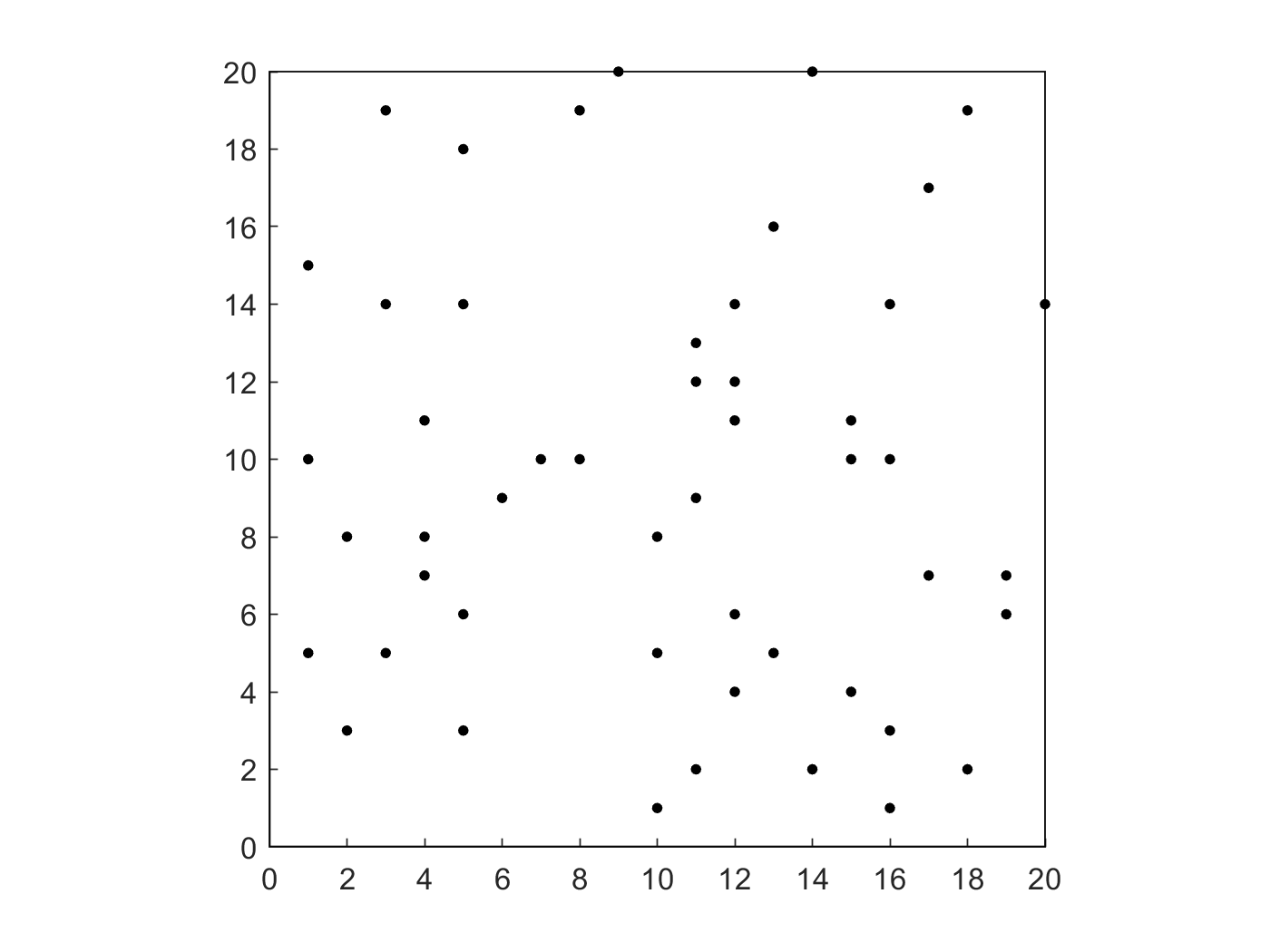}
	\end{minipage}}
	\hfill
  \subfloat[The 50-points data set in \cite{eilon-watson} within the square $\text{[}0,10\text{]}^2$.]{
	\begin{minipage}[c][1\width]{
	   0.45\textwidth}
	   \centering
	   \includegraphics[width=1.15\textwidth]{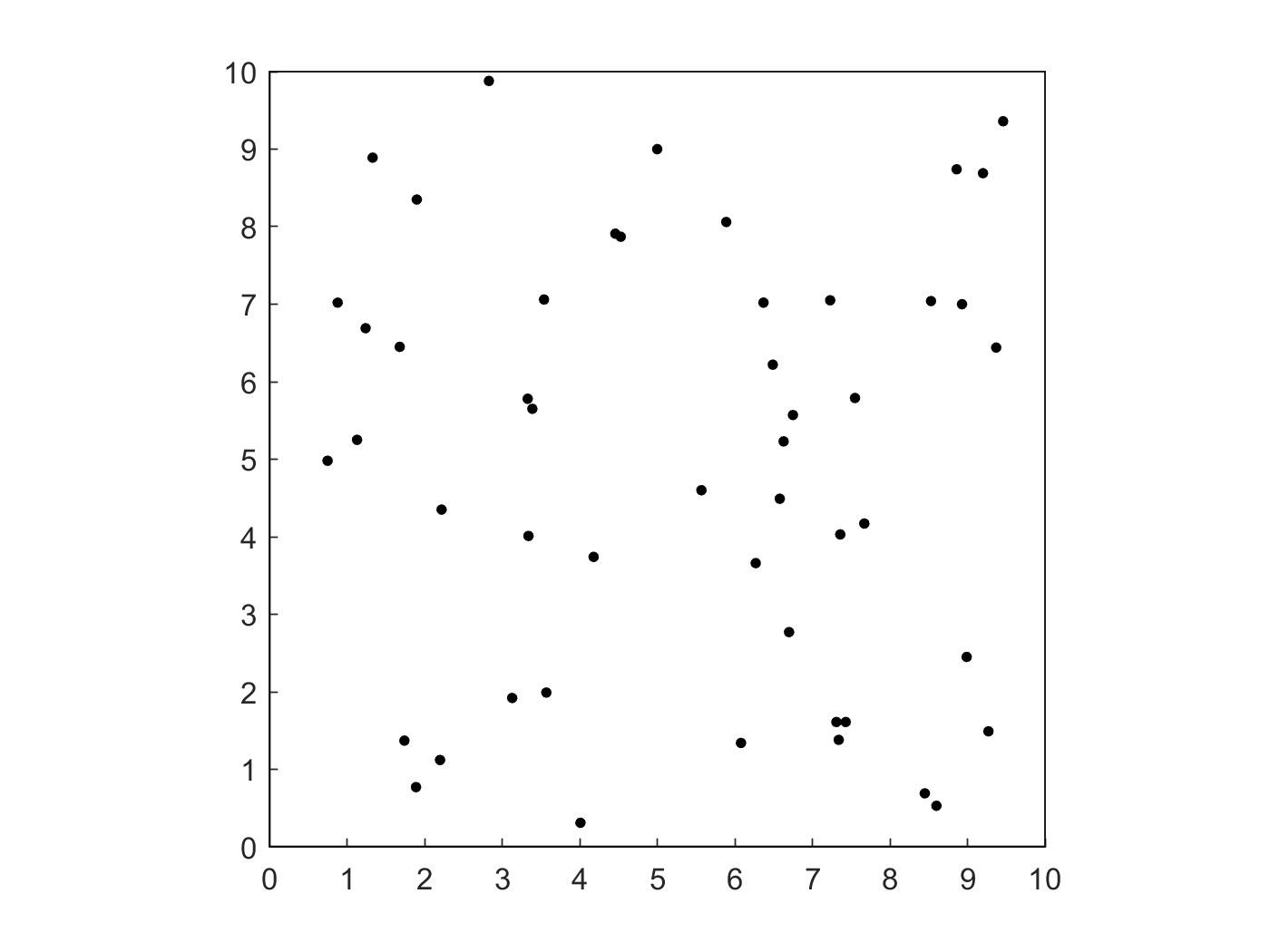}
	\end{minipage}}
\caption{Demand points from the data sets in \cite{eilon-watson,Parlar1994,bsss} within a rectangle.}\label{demandPoints}
\end{figure}

\begin{table}[H]
\begin{center}
{\tiny
\begin{tabular}{|r|rrr|rrr|r|rrr|rrr|}
\hline
(LocP-F1) &  \multicolumn{3}{c|}{CPU(s)} & \multicolumn{3}{c|}{gap(\%)} & 
(LocP-F2) &  \multicolumn{3}{c|}{CPU(s)} & \multicolumn{3}{c|}{gap(\%)}\\ \hline
 $m$        &	aver	&	min	&	max	&	aver	 & min	&	max	&	  
 $m$        &	aver	&	min	&	max	&	aver	 & min	&	max\\ \hline
5 &  0.92 &  0.67 &  1.25 &  0.00 &  0.00 &  0.00  &
5 &  2.17 &  1.11 &  2.81 &  0.00 &  0.00 &  0.01\\
10 &  3.88 &  3.46 &  4.49 &  0.00 &  0.00 &  0.01 &
10 &  11.84 &  8.85 &  15.31 &  0.00 &  0.00 &  0.01  \\
15 &  21.74 &  16.29 &  34.24 &  0.00 &  0.00 &  0.00 &
15 &  1490.12 &  30.60 &  7199.43 &  20.00 &  0.00 &  100.00\\
20 &  173.73 &  78.67 &  405.38 &  0.01 &  0.00 &  0.01 &
20 &  1886.48 &  72.14 &  7200.00 &  20.00 &  0.00 &  100.00 \\
30 &  3103.22 &  146.77 &  7200.00 &  0.03 &  0.01 &  0.07&
30 &  1341.46 &  259.14 &  2809.65 &  0.00 &  0.00 &  0.00\\
50 &  7200.00 &  7200.00 &  7200.00 &  2.57 &  0.81 &  3.86 &
50 &  5814.04 &  238.35 &  7200.00 &  0.35 &  0.01 &  1.26  \\
100 &  7200.00 &  7200.00 &  7200.00 &  6.96 &  4.36 &  8.86   &
100 &  7200.00 &  7200.00 &  7200.00 &  44.23 &  0.23 &  100.00   \\
\hline
Pre. + (LocP-F1) & 	&		&		&		 & 	&	  &  
Pre. + (LocP-F2) & 	&		&		&		 & 	&     \\ \hline
 $m$        &		&		&		&		 & 	&	  &  
 $m$        &		&		&		&		 & 	&	  \\ \hline
5 &  3.35 &  2.57 &  4.09 &  0.00 &  0.00 &  0.01     &
5 &  2.64 &  2.32 &  2.83 &  0.00 &  0.00 &  0.00    \\
10 &  9.61 &  8.84 &  10.67 &  0.00 &  0.00 &  0.01 &
10 &  11.17 &  8.09 &  14.41 &  0.00 &  0.00 &  0.01   \\
15 &  35.32 &  23.88 &  63.58 &  0.00 &  0.00 &  0.00  &
15 &  40.85 &  30.10 &  53.50 &  0.00 &  0.00 &  0.00\\
20 &  400.26 &  66.22 &  1525.33 &  0.00 &  0.00 &  0.01 &
20 &  87.53 &  51.91 &  186.05 &  0.00 &  0.00 &  0.01\\
30 &  3146.99 &  242.42 &  7200.00 &  0.38 &  0.00 &  1.84 &
30 &  908.80 &  175.44 &  3424.88 &  0.00 &  0.00 &  0.01\\
50 &  7200.00 &  7200.00 &  7200.00 &  2.32 &  0.12 &  4.60  &
50 &  1281.55 &  614.88 &  3083.29 &  0.01 &  0.00 &  0.01   \\
100 &  7200.00 &  7200.00 &  7200.00 &  6.07 &  3.39 &  9.27  &
100 &  7200.00 &  7200.00 &  7200.00 &  22.06 &  0.02 &  100.00  \\
\hline
\end{tabular}
}
\end{center}
\caption{Results obtained for the 4-points data set in \cite{Parlar1994}.}\label{tableL1}
\end{table}

\begin{table}[H]
\begin{center}
{\tiny
\begin{tabular}{|r|rrr|rrr|r|rrr|rrr|}
\hline
(LocP-F1) &  \multicolumn{3}{c|}{CPU(s)} & \multicolumn{3}{c|}{gap(\%)} & 
(LocP-F2) &  \multicolumn{3}{c|}{CPU(s)} & \multicolumn{3}{c|}{gap(\%)}\\ \hline
 $m$        &	aver	&	min	&	max	&	aver	 & min	&	max	&	  
 $m$        &	aver	&	min	&	max	&	aver	 & min	&	max\\ \hline
5 &  11.87 &  9.96 &  14.04 &  0.00 &  0.00 &  0.00  &
5 &  18.10 &  1.69 &  35.59 &  0.00 &  0.00 &  0.01\\
10 &  272.22 &  33.88 &  927.31 &  0.00 &  0.00 &  0.01 &
10 &  740.68 &  155.34 &  2638.86 &  0.00 &  0.00 &  0.00 \\
15 &  7200.00 &  7200.00 &  7200.00 &  3.32 &  0.06 &  6.66  &
15 &  4080.67 &  1286.86 &  7200.00 &  0.02 &  0.00 &  0.10\\
20 &  6921.33 &  5805.03 &  7200.00 &  1.96 &  0.01 &  5.67 &
20 &  3795.18 &  188.39 &  7200.00 &  4.45 &  0.00 &  19.35   \\
\hline
Pre. + (LocP-F1) & 	&		&		&		 & 	&	  &  
Pre. + (LocP-F2) & 	&		&		&		 & 	&     \\ \hline
 $m$        &		&		&		&		 & 	&	  &  
 $m$        &		&		&		&		 & 	&	  \\ \hline
5 &  10.69 &  6.48 &  15.66 &  0.00 &  0.00 &  0.00      &
5 &  15.68 &  7.11 &  31.05 &  0.00 &  0.00 &  0.01   \\
10 &  168.46 &  59.79 &  483.64 &  0.00 &  0.00 &  0.01   &
10 &  150.44 &  52.60 &  253.33 &  0.00 &  0.00 &  0.01  \\
15 &  7200.00 &  7200.00 &  7200.00 &  2.53 &  0.19 &  4.94  &
15 &  630.58 &  211.35 &  1087.40 &  0.00 &  0.00 &  0.01\\
20 &  6946.57 &  5934.32 &  7200.00 &  1.41 &  0.01 &  4.70  &
20 &  794.40 &  522.05 &  1319.05 &  0.00 &  0.00 &  0.01   \\
\hline
\end{tabular}
}
\end{center}
\caption{Results obtained for the 18-points data set in \cite{Parlar1994}.}\label{tableL2}
\end{table}

\begin{table}[H]
\begin{center}
{\tiny
\begin{tabular}{|r|rrr|rrr|r|rrr|rrr|}
\hline
(LocP-F1) &  \multicolumn{3}{c|}{CPU(s)} & \multicolumn{3}{c|}{gap(\%)} & 
(LocP-F2) &  \multicolumn{3}{c|}{CPU(s)} & \multicolumn{3}{c|}{gap(\%)}\\ \hline
 $m$        &	aver	&	min	&	max	&	aver	 & min	&	max	&	  
 $m$        &	aver	&	min	&	max	&	aver	 & min	&	max\\ \hline
5 &  105.39 &  8.83 &  480.18 &  0.00 &  0.00 &  0.01  &
5 &  1504.44 &  30.66 &  7200.00 &  20.00 &  0.00 &  100.00\\
10 &  2944.53 &  54.45 &  7200.00 &  0.29 &  0.00 &  1.04 &
10 &  2224.87 &  303.92 &  7200.00 &  20.00 &  0.00 &  100.00  \\
15 &  7200.00 &  7200.00 &  7200.00 &  3.61 &  2.02 &  6.80  &
15 &  5622.36 &  2353.16 &  7200.00 &  0.13 &  0.00 &  0.60\\
20 &  7200.00 &  7200.00 &  7200.00 &  3.26 &  1.68 &  6.83 &
20 &  6143.76 &  1881.24 &  7200.00 &  8.70 &  0.00 &  19.06\\
\hline
Pre. + (LocP-F1) & 	&		&		&		 & 	&	  &  
Pre. + (LocP-F2) & 	&		&		&		 & 	&     \\ \hline
 $m$        &		&		&		&		 & 	&	  &  
 $m$        &		&		&		&		 & 	&	  \\ \hline
5 &  28.87 &  8.98 &  79.53 &  0.00 &  0.00 &  0.01      &
5 &  30.89 &  12.41 &  53.97 &  0.00 &  0.00 &  0.00   \\
10 &  2960.38 &  87.97 &  7200.00 &  0.24 &  0.00 &  0.88   &
10 &  252.53 &  93.04 &  412.86 &  0.00 &  0.00 &  0.01  \\
15 &  7200.00 &  7200.00 &  7200.00 &  2.44 &  1.02 &  5.46  &
15 &  1176.88 &  673.57 &  1706.08 &  0.00 &  0.00 &  0.01\\
20 &  7200.00 &  7200.00 &  7200.00 &  1.69 &  0.35 &  2.75 &
20 &  4250.37 &  1137.07 &  7200.00 &  0.01 &  0.00 &  0.02\\
\hline
\end{tabular}
}
\end{center}
\caption{Results obtained for the 30-points data set in \cite{bsss}.}\label{tableL3}
\end{table}

\begin{table}[H]
\begin{center}
{\tiny
\begin{tabular}{|r|rrr|rrr|r|rrr|rrr|}
\hline
(LocP-F1) &  \multicolumn{3}{c|}{CPU(s)} & \multicolumn{3}{c|}{gap(\%)} & 
(LocP-F2) &  \multicolumn{3}{c|}{CPU(s)} & \multicolumn{3}{c|}{gap(\%)}\\ \hline
 $m$        &	aver	&	min	&	max	&	aver	 & min	&	max	&	  
 $m$        &	aver	&	min	&	max	&	aver	 & min	&	max\\ \hline
5 &  251.76 &  16.54 &  1180.78 &  0.00 &  0.00 &  0.01  &
5 &  1047.08 &  66.89 &  4423.39 &  0.00 &  0.00 &  0.01\\
10 &  3355.49 &  607.81 &  7200.00 &  0.77 &  0.01 &  2.82  &
10 &  1867.96 &  835.24 &  3380.85 &  0.00 &  0.00 &  0.00 \\
15 &  7200.00 &  7200.00 &  7200.00 &  5.11 &  2.46 &  7.82  &
15 &  7200.00 &  7200.00 &  7200.00 &  31.13 &  3.26 &  62.39\\
20 &  7200.00 &  7200.00 &  7200.00 &  6.53 &  3.58 &  10.83 &
20 &  7200.00 &  7200.00 &  7200.00 &  54.74 &  12.99 &  100.00 \\
\hline
Pre. + (LocP-F1) & 	&		&		&		 & 	&	  &  
Pre. + (LocP-F2) & 	&		&		&		 & 	&     \\ \hline
 $m$        &		&		&		&		 & 	&	  &  
 $m$        &		&		&		&		 & 	&	  \\ \hline
5 &  1464.15 &  23.72 &  7200.00 &  0.09 &  0.00 &  0.43      &
5 &  64.38 &  27.10 &  114.21 &  0.00 &  0.00 &  0.01   \\
10 &  3181.93 &  279.24 &  7200.00 &  0.45 &  0.00 &  1.48  &
10 &  464.81 &  239.18 &  652.83 &  0.00 &  0.00 &  0.01  \\
15 &  7200.00 &  7200.00 &  7200.00 &  3.19 &  1.45 &  5.08  &
15 &  3617.06 &  1589.11 &  7200.00 &  0.07 &  0.01 &  0.33\\
20 &  7200.00 &  7200.00 &  7200.00 &  3.20 &  1.64 &  4.30  &
20 &  6487.21 &  3545.33 &  7200.00 &  0.13 &  0.00 &  0.49\\
\hline
\end{tabular}
}
\end{center}
\caption{Results obtained for the 50-points data set in \cite{bsss}.}\label{tableL4}
\end{table}

\begin{table}[H]
\begin{center}
{\tiny
\begin{tabular}{|r|rrr|rrr|r|rrr|rrr|}
\hline
(LocP-F1) &  \multicolumn{3}{c|}{CPU(s)} & \multicolumn{3}{c|}{gap(\%)} & 
(LocP-F2) &  \multicolumn{3}{c|}{CPU(s)} & \multicolumn{3}{c|}{gap(\%)}\\ \hline
 $m$        &	aver	&	min	&	max	&	aver	 & min	&	max	&	  
 $m$        &	aver	&	min	&	max	&	aver	 & min	&	max\\ \hline
5 &  1453.43 &  14.12 &  7200.00 &  0.79 &  0.00 &  3.92  &
5 &  108.67 &  11.33 &  244.37 &  0.00 &  0.00 &  0.01\\
10 &  4084.76 &  1571.49 &  7200.00 &  1.44 &  0.00 &  6.19 &
10 &  1284.20 &  341.31 &  3208.91 &  0.00 &  0.00 &  0.01 \\
15 &  7200.00 &  7200.00 &  7200.00 &  5.24 &  2.51 &  11.87   &
15 &  6720.91 &  4752.57 &  7200.00 &  1.55 &  0.00 &  5.67\\
20 &  7200.00 &  7200.00 &  7200.00 &  9.71 &  5.68 &  14.64 &
20 &  7200.00 &  6974.36 &  7200.00 &  35.96 &  0.00 &  100.00\\
\hline
Pre. + (LocP-F1) & 	&		&		&		 & 	&	  &  
Pre. + (LocP-F2) & 	&		&		&		 & 	&     \\ \hline
 $m$        &		&		&		&		 & 	&	  &  
 $m$        &		&		&		&		 & 	&	  \\ \hline
5 &  1458.23 &  13.42 &  7200.00 &  0.73 &  0.00 &  3.08     &
5 &  70.77 &  20.86 &  133.53 &  0.00 &  0.00 &  0.01   \\
10 &  4680.45 &  371.35 &  7200.00 &  1.08 &  0.01 &  4.05   &
10 &  602.39 &  256.74 &  939.97 &  0.01 &  0.00 &  0.01  \\
15 &  7200.00 &  7200.00 &  7200.00 &  3.72 &  1.75 &  6.55  &
15 &  4311.12 &  1239.14 &  7200.00 &  0.18 &  0.00 &  0.72\\
20 &  7200.00 &  7200.00 &  7200.00 &  6.91 &  3.12 &  12.72 &
20 &  6315.88 &  2697.51 &  7200.00 &  14.07 &  0.00 &  69.17  \\
\hline
\end{tabular}
}
\end{center}
\caption{Results obtained for the 50-points data set in \cite{eilon-watson}.}\label{tableL5}
\end{table}

In the light of the results in Tables \ref{tableL1}-\ref{tableL5} we conclude that the preprocessing in Algorithm \ref{alg:preprocessLoc} is effective to lowering the gap for both formulation (LocP-F1) and formulation (LocP-F2). Especially significant is the effect of the preprocessing in the case of formulation (LocP-F2), where the high gap obtained by the formulation (100\% in some cases) in some instances is then drastically reduced to almost zero when the preprocessing is applied. However, the preprocessing is not able to induce such a high reduction in some instances, see Table \ref{tableL1} for $m=100$ or Table \ref{tableL5} for $m=20$ where the performance of the combined preprocessing and formulation (LocP-F2) is still poor. On the other hand, formulation (LocP-F1) shows a more regular performance in terms of gap although in general its results are worse. Formulation (LocP-F2) also provides better computation times than formulation (LocP-F1) (this behaviour can be explained by Proposition \ref{propRelaxationLoc}), so it may be advisable in general, in spite of its (rare) bad performance on a few instances.

Finally, we would like to point out a final fact on the comparison of our methods with that in \cite{refraction2017}. As commented above, we have generated the instances for Problem \eqref{LP} following a similar approach to the one in \cite{refraction2017} which  considers the data sets in \cite{eilon-watson,Parlar1994,bsss}. Indeed, the instances in \cite{refraction2017} are also instances of Problem \eqref{LP} for the particular case of only two regions. Therefore, it is an interesting question whether our formulations are also efficient when this particular case is considered. We have performed this computational experience and we have observed that the solutions found by our formulations coincide with the those reported in \cite{refraction2017} and that the computational times are also similar (of the order of one second or less).

\section{Conclusions}
\label{s:4}

In this paper the shortest path problem and the single-facility Weber problem are considered in a continuous framework subdivided in different polyhedra and where each polyhedron is endowed with a different $\ell_p$-norm. We derive two MISOCP formulations for each problem using the $\ell_p$-norms modeling procedure in \cite{BPE2014}. We prove that the continuous relaxation of one of the formulations provides tighter lower bounds than the other. Based on the tightness of the lower bounds provided by such a formulation, we develop a preprocessing algorithm which is used to improve the computational performance of the MISOCP formulations. We solve instances with up to 500 regions for the shortest path problem and also solve some classical instances adapted to the subdivision framework in the case of the Weber problem. As a by-product of the results in our work, we also relate the local optimality condition of the shortest path problem with Snell's law.

Besides the shortest path  and  Weber problems, other  problems also fit in the subdivision framework of this paper, as for instance the Travelling Salesman (TSP) and the Minimum Spanning Tree (MST) problems. The resulting problems are related with the TSP and MST with neighborhoods (see e.g. \cite{ArkHas1994} and \cite{BFP2017}, respectively). Indeed, we would obtain problems in which neighborhoods induce a subdivision into polyhedra and where the connection between neighborhoods is determined by the underlying graph of the subdivision and having different norms in each region. As we have shown, the formulations that we have presented are flexible since they can be easily adapted to handle different situations, which would also include the above TSP and MST problems. The adapted formulations would exploit the particular geometry of the subdivision framework for these problems. This subject is among our future lines of research and will be considered in a follow up paper.

\begin{acknowledgements}
The authors of this research acknowledge financial support by the Spanish Ministerio de Ciencia y Tecnologia, Agencia Estatal de Investigacion and Fondos Europeos de Desarrollo Regional (FEDER) via project PID2020-114594GB-C21, FEDER-US-1256951, Junta de Andalucía P18-FR-422, CEI-3-FQM331, FQM-331, and NetmeetData: Ayudas Fundación BBVA a equipos de investigación cient\'ifica 2019.
\end{acknowledgements}

\bibliographystyle{elsarticle-harv}

\end{document}